\numberwithin{equation}{section}
\numberwithin{subsection}{section}
\newenvironment{enumerate1} {\begin{enumerate}[\upshape (1)]} {\end{enumerate}}
\theoremstyle{definition}
\newtheorem{definition}[subsubsection]{Definition}
\newtheorem{theorem}[subsubsection]{Theorem}
\newtheorem{lemma}[subsubsection]{Lemma}
\newtheorem{corollary}[subsubsection]{Corollary}
\newtheorem{Proposition}[subsubsection]{Proposition}
\newtheorem{Claim}[]{Claim}
\newtheorem*{Organization}{Organization}
\newtheorem{construction}[subsubsection]{Construction}
\theoremstyle{remark}
\newtheorem{remark}{Remark}
\newcommand{\p}{\mathbf{P}^{1}}
\newcommand{\blA}{Bl_{Z}A}
\newcommand{\HS}{\textrm{Hilb}_{S}}
\newcommand{\HC}{\textrm{Hilb}_{C}}
\newcommand{\Fl}{\textrm{Flag}_{S}}
\newcommand{\RH}{\mathcal{R}Hom}
\newcommand{\J}{\overline{J}}
\newcommand{\hig}{\mathcal{H}iggs}
\newcommand{\de}{\textrm{det}}
\newcommand{\Pd}{\textrm{Pic}^{d}}
\newcommand{\Hom}{\mathcal{H}om}
\newcommand{\FB}{\textrm{Flag}^{d}_{\mathcal{B}}}
\begin{document}
\title{Construction of the Poincar\'e sheaf on the stack of rank $2$ Higgs bundles}

\author{Mao Li}

\maketitle

\begin{abstract}
Let $X$ be a smooth projective curve of genus $g$ and $L$ be a degree $l$ line bundle on $X$ with $l\geq 2g-1$. Denote the stack of rank two Higgs bundles on $X$ with value in $L$ by $\hig$ and the semistable part by $\hig_{ss}$. Let $H$ be the Hitchin base. In this paper we will construct the Poincar\'e sheaf $\mathcal{P}$ on $\hig\times_{H}\hig_{ss}$ which is a maximal Cohen-Macaulay sheaf and flat over $\hig_{ss}$. In particular this includes the locus of nonreduced spectral curves. The present work generalizes the construction of the Poincar\'e sheaf in $[1]$, $[3]$ and $[13]$. 
\end{abstract}

\tableofcontents

\newpage

\section{Introduction}

\subsection{Poincar\'e sheaf}
Let $C$ be a smooth projective curve and $J$ be the Jacobian of $C$. Then it is well known that there is a Poincar\'e line bundle $\mathcal{P}$ on $J\times J$ which is the universal family of topologically trivial line bundles on $J$(See $[14]$).
When $C$ is an integral planar curve, the Jacobian $J$ is no longer projective, but we can consider the compactified Jacobian $\J$ ($[7]$,$[8]$) which parameterizes torsion free rank $1$ sheaves on $C$. In this case there exists a Poincar\'e line bundle on $\mathcal{P}$ on $J\times\J$ ($[2]$) defined in the following way. Consider
\[\begin{CD}
C\times J\times\J\\
@V{\pi}VV\\
J\times\J\\
\end{CD}\]
Then:
\begin{equation}\label{Poincare line bundle}
\mathcal{P}=\de(R\pi_{*}(L\otimes F))\otimes \de(R\pi_{*}O_{C})\otimes \de(R\pi_{*}(L))^{-1}\otimes \de(R\pi_{*}(F))^{-1}
\end{equation}
where $F$ and $L$ are the universal sheaves on $C\times J$ and $C\times\J$. It is interesting to ask whether we can extend $\mathcal{P}$ to $\J\times\J$. For curves with double singularities, this has been answered in $[15]$, and the generalization to all integral planar curves is obtained in $[1]$(Similar results have also been obtained by Margarida Melo, Antonio Rapagnetta and Filippo Viviani in $[3]$, where they work with moduli space instead of stacks):
\begin{theorem}
There exists a maximal Cohen-Macaulay sheaf $\mathcal{P}$ on $\J\times\J$ such that the restriction of $\mathcal{P}$ to $J\times\J\cup\J\times J$ is the Poincar\'e line bundle given by formula~\ref{Poincare line bundle}. Moreover, $\mathcal{P}$ is flat over both component $\J$.
\end{theorem}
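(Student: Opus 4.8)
The plan is to view the statement as the problem of extending the line bundle of formula~\ref{Poincare line bundle} across the locus where \emph{both} torsion-free sheaves fail to be locally free, in a way that is visibly maximal Cohen--Macaulay and flat. Write $U=(J\times\J)\cup(\J\times J)$ for the open subset of $\J\times\J$ on which at least one factor is a line bundle; there formula~\ref{Poincare line bundle} already defines a line bundle $\mathcal{P}_U$. Because $C$ is integral with planar singularities, the boundary $\partial\J=\J\setminus J$ is a Cartier divisor, so its self-product $Z=\J\times\J\setminus U$ has codimension $2$. The difficulty is that $\J$ is in general singular and non-normal, so neither a Hartogs-type extension across $Z$ nor the naive determinant formula is available: at a point of $Z$ the input $F_1\otimes F_2$ to formula~\ref{Poincare line bundle} acquires spurious torsion, so one must genuinely construct the sheaf and prove its two properties along $Z$.

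First I would fix the candidate abstractly and then build it relatively. Abstractly, let $\mathcal{P}$ be the unique torsion-free, $S_2$ coherent sheaf on $\J\times\J$ restricting to $\mathcal{P}_U$; equivalently the reflexive hull of $j_{*}\mathcal{P}_U$ for $j\colon U\hookrightarrow\J\times\J$. This is automatically symmetric under the swap of factors and agrees with the Poincar\'e line bundle on $U$, so the whole content is to upgrade $S_2$ to maximal Cohen--Macaulay and to prove flatness. The mechanism I would use for both is to realize $\J$ as a fibre of a \emph{regular} total space: let $B$ be the versal deformation of the singularities of $C$ (in the Higgs setting, a smooth chart of the moduli of Higgs bundles), let $\overline{\mathcal{J}}\to B$ be the relative compactified Jacobian, whose total space is regular for planar singularities, and construct a relative sheaf $\mathcal{P}_{rel}$ on $\overline{\mathcal{J}}\times_{B}\overline{\mathcal{J}}$ restricting to the relative Poincar\'e line bundle over the relative version of $U$ and to $\mathcal{P}$ on the special fibre.

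The heart of the argument is the local construction of $\mathcal{P}_{rel}$ near $Z$. I would pass to the complete local ring $R=\widehat{\mathcal{O}}_{C,p}\cong k[[x,y]]/(f)$ of each planar singularity and use the deformation theory of the two torsion-free $R$-modules to split a neighbourhood of a point of $Z$ into a product governed by the singularity. Planarity is essential here: a plane-curve singularity is Gorenstein, so the local compactified Jacobian is Cohen--Macaulay and the duality functor $\mathbf{R}\mathcal{H}om(-,R)$ is exact in the relevant range. This is what lets one replace the broken product $F_1\otimes F_2$ by a torsion-free, derived-corrected product and identify $\mathcal{P}_{rel}$ near $Z$ with an explicit Cohen--Macaulay module, after which one checks that $\overline{\mathcal{J}}\times_{B}\overline{\mathcal{J}}$ is Cohen--Macaulay and that $\mathcal{P}_{rel}$ is maximal Cohen--Macaulay on it.

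Granting that $\mathcal{P}_{rel}$ is maximal Cohen--Macaulay, both assertions follow formally. Applying miracle flatness to the first projection $\overline{\mathcal{J}}\times_{B}\overline{\mathcal{J}}\to\overline{\mathcal{J}}$, whose target is regular and whose fibres have the constant dimension $g$, shows $\mathcal{P}_{rel}$ is flat over $\overline{\mathcal{J}}$ and hence over $B$; restricting to the special fibre over $0\in B$ (a cartesian diagram) then yields that $\mathcal{P}$ is maximal Cohen--Macaulay on $\J\times\J$ and, by base change, flat over each factor $\J$. I expect the main obstacle to be exactly the step at the deepest stratum of $Z$, where both sheaves degenerate at the same singular point: there one must verify that the corrected product is torsion free of the expected depth so that $\mathcal{P}_{rel}$ is genuinely maximal Cohen--Macaulay, and it is precisely the Gorenstein (planar) property of $C$ that makes this homological estimate go through.
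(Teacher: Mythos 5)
Your formal reductions are all legitimate, and in fact they mirror steps that genuinely occur in $[1]$: the candidate sheaf must be $j_{*}\mathcal{P}_{U}$ by the uniqueness of maximal Cohen--Macaulay extensions across the codimension-$2$ locus $Z$ (Proposition~\ref{uniqueness of CM sheaf}); passing to a versal family of planar curves, over which the relative compactified Jacobian has regular total space, is the standard device; and deducing flatness from maximal Cohen--Macaulayness via miracle flatness (Proposition~\ref{flatness of CM sheaves}) is exactly how the flatness assertion is obtained. The genuine gap is at what you yourself call the heart of the argument: the construction of $\mathcal{P}_{rel}$ near $Z$ and the proof that it is maximal Cohen--Macaulay. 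You assert that, since plane curve singularities are Gorenstein, one can ``replace the broken product $F_{1}\otimes F_{2}$ by a torsion-free, derived-corrected product'' and thereby identify $\mathcal{P}_{rel}$ near $Z$ with an explicit Cohen--Macaulay module. No such object is defined, and no such local formula is known. The fiber of $\mathcal{P}$ at a pair $(F_{1},F_{2})$ is a determinant of cohomology (formula~\ref{Poincare line bundle}), a global invariant of the curve; it does not localize at the singular points of $C$, and there is no known structure theory that splits a neighbourhood of a point of $Z$ in $\J\times\J$ together with the sheaf into local punctual pieces. The Gorenstein duality you invoke is exactness of $\Hom(-,R)$ on torsion-free modules over $R=\widehat{O}_{C,p}$, which is a statement about sheaves on $C$; it gives no control over the depth of $j_{*}\mathcal{P}_{U}$ as a module over the local rings of $\J\times\J$ at points of $Z$. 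That depth estimate at the deepest stratum is precisely the content of the theorem, and your proposal replaces it by a name.

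For comparison, the proof the paper relies on (Arinkin's, reviewed in subsection $1.3$) is global rather than local-to-global: one covers $\J$ by images of the Abel--Jacobi maps $\alpha\colon U_{d}\subseteq\HC^{d}\rightarrow\J$, which are smooth on $U_{d}$, and constructs on $\HS^{d}\times\J$ the sheaf
$$Q=((\psi\times id)_{*}(\sigma_{d}\times id)^{*}(l^{d}\times id)_{*}F^{\boxtimes d})^{sign}\otimes p_{1}^{*}det(\mathcal{A})^{-1},$$
where $\psi$ is the finite flat projection from Haiman's isospectral Hilbert scheme $\widetilde{\HS^{d}}$ to $\HS^{d}$. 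Maximal Cohen--Macaulayness of $Q$ comes from Haiman's theorem on the isospectral Hilbert scheme together with the finiteness and flatness of $\psi$ over the smooth scheme $\HS^{d}$; a separate argument shows $Q$ is supported on $\HC^{d}\times\J$; and the Poincar\'e sheaf is then obtained by descending $Q|_{U_{d}\times\J}$ along the smooth morphism $\alpha$, with the versal-family and uniqueness arguments entering only afterwards, to glue the charts and prove flatness. To complete your proposal you would have to either reproduce this global construction or supply the missing local depth estimate, and the latter is not something the Gorenstein property alone provides.
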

We shall call $\mathcal{P}$ the Poincar\'e sheaf. In fact, even though the theorem is stated only for integral curves, the argument presented in $[1]$ also works for reduced planar curves.\\

\begin{remark}
For the construction of the Poincar\'e line bundle on $J\times\J\cup\J\times J$, we do not need to assume that $C$ is reduced. Similarly, Lemma~\ref{equivariance property} also holds for Poincar\'e line bundles of nonreduced planar curves.
\end{remark}

One of the main motivations for studying compactified Jacobians is that they are fibers of the Hitchin fibration. 
Let $X$ be a smooth projective curve, $L$ a line bundle on $X$. Denote the stack of rank $n$ Higgs bundle with value in $L$ by $\hig$. Let $H$ be the Hitchin base which parameterizes spectral curves. We have the Hitchin fibration $\hig\xrightarrow{h} H$. It is well-known that a Higgs bundle on $X$ can be naturally viewed as a torsion-free rank $1$ sheaf on the spectral curve $C$(See $[16]$). Moreover, let $H_{r}$ be the open subscheme of $H$ corresponding to reduced spectral curves. Then it is well-known that the fibers of $h$ over $H_{r}$ can be identified with the compactified Jacobian of $C$. Let $\hig^{reg}$ denote the open substack of $\hig$ corresponding to line bundles on spectral curves. Then formula~\ref{Poincare line bundle} defines a Poincar\'e line bundle $\mathcal{P}$ on $$\hig^{reg}\times_{H}\hig\cup\hig\times_{H}\hig^{reg}$$
Moreover, denote the open substack of Higgs bundles with generically regular Higgs field by $\widetilde{\hig}$ (Definition~\ref{generically regular Higgs field}). As we will see later in Proposition~\ref{definition of Q}, the construction in $[1]$ actually provides a maximal Cohen-Macaulay sheaf $\mathcal{P}$ on
$$\widetilde{\hig}\times_{H}\hig\cup\hig\times_{H}\widetilde{\hig}$$
which extends the Poincar\'e line bundle on
$$\hig^{reg}\times_{H}\hig\cup\hig\times_{H}\hig^{reg}$$
Moreover, it is shown in $[1]$ that $\mathcal{P}$ induces an autoequivalence of the derived category. This establishes the Langlands duality for Hitchin systems for $GL(n)$ over the locus of integral spectral curves(See $[1]$ for discussions about its relations with automorphic sheaves). Hence it is a very interesting question whether we can extend the maximal Cohen-Macaulay sheaf above to $\hig\times_{H}\hig$. The main issue here is how to extend $\mathcal{P}$ to the locus of nonreduced spectral curves. In this paper we provide a partial answer in the case of rank $2$ Higgs bundles on a projective smooth curve $X$. Namely, let $\hig_{ss}$ be the open substack of semistable Higgs bundles. We are going to construct a maximal Cohen-Macaulay sheaf on $\hig\times_{H}\hig_{ss}$ such that it is an extension of the Poincar\'e line bundle.

\subsection{Main result and the Organization of the paper}
Let $X$ be a smooth projective curve of genus $g$, $L$ a line bundle on $X$ of degree $l\geq 2g-1$. Let $\hig$ be the stack of rank two Higgs bundles on $X$ with value in $L$, and $H$ be the Hitchin base. Let $\hig_{ss}$ denote the open substack of semistable Higgs bundles, and let $\hig^{reg}$ denote the open substack of $\hig$ corresponds to Higgs bundles that are line bundles on its spectral curve. The main theorem of the paper is the following:
\begin{theorem}\label{the main theorem}
There exists a maximal Cohen-Macaulay sheaf $\mathcal{P}$ on $\hig\times_{H}\hig_{ss}$ which extends the Poincar\'e line bundle on $\hig^{reg}\times_{H}\hig_{ss}$. Moreover, $\mathcal{P}$ is flat over $\hig_{ss}$.
\end{theorem}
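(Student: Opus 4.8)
The plan is to start from the maximal Cohen-Macaulay sheaf already produced by Proposition~\ref{definition of Q} on $\widetilde{\hig}\times_{H}\hig\cup\hig\times_{H}\widetilde{\hig}$ and to extend it across the only remaining locus, namely the product of the two ``central'' strata $(\hig\setminus\widetilde{\hig})\times_{H}(\hig_{ss}\setminus\widetilde{\hig})$. For rank two a Higgs bundle fails to be generically regular (Definition~\ref{generically regular Higgs field}) exactly when its Higgs field is generically a scalar, which forces the associated spectral curve to be a non-reduced double line; hence the locus to be filled in sits entirely over the non-reduced part of $H$ and, by a dimension count, has codimension at least two in $\hig\times_{H}\hig_{ss}$. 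Since a maximal Cohen-Macaulay sheaf satisfies Serre's condition $S_{2}$, it is recovered as the pushforward $j_{*}$ of its restriction to the complement of a codimension-two closed subset; the extension is therefore unique once it exists, automatically agrees with the Poincar\'e line bundle of~\eqref{Poincare line bundle} wherever the latter is defined, and local constructions glue. The whole problem is thus local over $H$ near a non-reduced spectral curve, and by the equivariance of Lemma~\ref{equivariance property} (scaling the Higgs field) I may assume, after completing the square, that the limiting spectral curve is the ribbon $s^{2}=0$, the square-zero thickening of $X$ inside the total space of $L$.

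Next I would construct the extension by an Abel-Jacobi pushforward. Working over a small neighborhood $U\subset H$ of a non-reduced curve, form the relative Hilbert scheme $\HuC$ of the universal spectral curve over $U$ for a sufficiently large degree $d$; the assignment $Z\mapsto I_{Z}$ gives an Abel-Jacobi morphism $\HuC\to\hig$ onto the relative compactified Jacobian over $U$. Pairing the universal ideal on $\HuC$ with the universal sheaf of the second factor and taking a determinant of cohomology, following formula~\eqref{Poincare line bundle}, produces a natural line bundle on $\HuC\times_{U}\hig_{ss}$, and the candidate Poincar\'e sheaf is its pushforward along the Abel-Jacobi map in the first variable. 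To gain control over the singular fibers I would resolve the indeterminacy of this map using the flag Hilbert scheme $\FB$, so that the construction becomes a genuine pushforward of a line bundle along a proper map with well-understood fibers.

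The verification then splits into three checks. First, flatness over $\hig_{ss}$: it suffices to show that for each semistable sheaf in the second factor the restriction of the candidate to the first-factor compactified Jacobian deforms flatly, which I would deduce from the flatness of $\HuC$ over $U$ together with cohomology and base change along the Abel-Jacobi map. Second, the maximal Cohen-Macaulay property: here I would use that $\HuC$, or rather its flag resolution $\FB$, has the expected dimension and mild (ideally rational, locally complete intersection) singularities, so that the pushforward computes cohomology with no higher terms and inherits the required depth along the new stratum. Third, compatibility: on the overlap with $\widetilde{\hig}\times_{H}\hig\cup\hig\times_{H}\widetilde{\hig}$ the pushforward must be identified with the sheaf of Proposition~\ref{definition of Q}; this follows from the uniqueness of maximal Cohen-Macaulay extensions noted above, once both sheaves are seen to restrict to the Poincar\'e line bundle of~\eqref{Poincare line bundle} on the dense line-bundle locus $\hig^{reg}$.

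The main obstacle is the behaviour over the ribbon spectral curves, where neither Higgs bundle is a line bundle on its spectral curve and formula~\eqref{Poincare line bundle} breaks down entirely. Concretely, the relevant compactified Jacobian parametrizes torsion-free rank one $\mathcal{O}_{X}[\epsilon]/(\epsilon^{2})$-modules, many of which are not locally free, and one must show simultaneously that the Hilbert and flag schemes remain flat with controlled singularities over this stratum and that the resulting pushforward is \emph{exactly} maximal Cohen-Macaulay there. Establishing the depth estimate and the flatness jointly along this most degenerate locus, rather than on the open part where everything reduces to the reduced-curve case treated in $[1]$, $[3]$ and $[13]$, is where the real work of the argument lies.
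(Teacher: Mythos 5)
Your proposal does not close the actual gap: the mechanism you propose for producing the sheaf over the missing locus cannot reach that locus. The Abel-Jacobi map you want to push forward along, $\HuC\xrightarrow{\alpha}\hig$, $D\mapsto \check{I}_{D}\otimes N^{-m}$, has image contained in $\widetilde{\hig}$ --- this is exactly why the paper writes it as $\HuC\xrightarrow{\alpha}\widetilde{\hig}$. Indeed $\check{I}_{D}$ is generically isomorphic to $O_{\mathcal{C}}$, and even on a ribbon the tautological section acts on $O_{\mathcal{C}}$ at the generic point as a nonzero nilpotent, hence regular, endomorphism. By contrast, the Higgs bundles you must handle are precisely those with generically scalar Higgs field; as sheaves on the ribbon these are pushforwards of rank two bundles from the reduced curve $X$, and are never of the form $\check{I}_{D}\otimes(\textrm{line bundle})$, so they never lie in the image of $\alpha$. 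Consequently $R(\alpha\times\mathrm{id})_{*}$ of any sheaf on $\HuC\times_{U}\hig_{ss}$ has support contained in the closure of $\widetilde{\hig}\times_{H}\hig_{ss}$ and is built from empty fibers at the points of $(\hig\setminus\widetilde{\hig})\times_{H}(\hig_{ss}\setminus\widetilde{\hig})$ that you need to fill in; since a maximal Cohen-Macaulay sheaf on the Gorenstein stack $\hig\times_{H}\hig_{ss}$ must have full support (Proposition~\ref{codimension of supp of CM sheaves}), your candidate cannot be maximal Cohen-Macaulay, and at best it reproduces Corollary~\ref{CM sheaf constructed in 1}, which is the part that is already known. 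The codimension-two argument you open with only gives \emph{uniqueness} of an extension (Proposition~\ref{uniqueness of CM sheaf}); it produces nothing, because $j_{*}$ of a maximal Cohen-Macaulay sheaf across a codimension-two locus need not be coherent --- coherence and depth of that pushforward is exactly what must be proved. (Two smaller points: on the generically regular locus the correct operation is smooth descent along $\alpha|_{U_{d}}$, not pushforward along $\alpha$, whose fibers are positive dimensional; and Lemma~\ref{equivariance property} concerns tensoring by line bundles, not translating the Higgs field by $c\cdot\mathrm{id}$, so your reduction to the ribbon $s^{2}=0$ invokes a symmetry the paper never states, though a true one.)

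The paper's route is essentially forced by this obstruction: it abandons the Hilbert scheme of the (possibly nonreduced) spectral curve altogether and builds a Radon-transform correspondence modeled on Drinfeld's construction of automorphic sheaves. One introduces the smooth auxiliary stack $\mathcal{Y}$ of data $(E,\varphi,s,\sigma)$ in which $\varphi\in\textrm{Hom}(E,M\otimes L)$ is only a map to the quotient line bundle, not a Higgs field (Construction~\ref{definition of Y}), so that $\hig'^{m}$ sits inside $\mathcal{Y}$ as a closed substack (Lemma~\ref{elementary properties of Y}); one blows up $\mathcal{Y}$ along the degeneracy locus $\mathcal{Z}$ and shows the blowup $\mathcal{B}$ maps to $\HS^{d}$, the Hilbert scheme of the smooth surface rather than of the curve (Proposition~\ref{resolve the rational map}); the Poincar\'e sheaf is then $R\pi_{*}p^{*}Q$ with $Q$ as in Proposition~\ref{definition of Q}. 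Maximal Cohen-Macaulayness is proved by the cohomological vanishing of Lemma~\ref{the main lemma} (via Fourier-Mukai on $\Pd$, Lemma~\ref{vanishing of pushforward}) together with the dimension estimates of Lemma~\ref{further props of B}, and the globalization in Section $5$ hinges on Lemma~\ref{a property of semistable higgs bundle}: a semistable Higgs bundle with unstable underlying bundle is automatically generically regular. That lemma is the only reason semistability enters the statement at all; your proposal assigns no role to semistability and would, if it worked, yield the sheaf on all of $\hig\times_{H}\hig$ --- something the paper neither claims nor can prove --- a further sign that the genuine difficulty has been bypassed rather than solved.
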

By Proposition~\ref{uniqueness of CM sheaf}, such an extension of the Poincar\'e line bundle on $\hig^{reg}\times_{H}\hig_{ss}$ is unique if exists. Let $\widetilde{\hig}$ be the open substack of $\hig$ corresponds to Higgs bundles with generically regular Higgs field(See Definition~\ref{generically regular Higgs field}). The construction in $[1]$ provides the Poincar\'e sheaf on $\widetilde{\hig}\times_{H}\hig_{ss}$. The problem is that for Higgs bundles with nonreduced spectral curves, the Higgs field may not be generically regular, hence the construction in $[1]$ does not extend to $\hig\times_{H}\hig_{ss}$. In the previous work $[13]$ we constructed the Poincar\'e sheaf for the stack of Higgs bundles of $\p$. In this paper we generalize the construction to higher genus curves.
\begin{Organization}
The rest of the paper is organized as follows. In $1.3$ we shall review the construction in $[1]$ and adapt it to our setting. In $1.4$ we sketch the main idea of the construction, which is the fundamental diagram $1.2$ below. We will discuss its motivation and also the relation with our previous work for $\p$. In section $2$ we gather some preliminary results that will be used in the paper. Most of these are similar to the previous paper, the new feature is the vanishing result in Lemma~\ref{vanishing of pushforward}. In section $3$ we first review the geometry of the stack of Higgs bundles. In subsection $3.3$ we come to the main construction, which involves the stack $\mathcal{Y}$, a closed substack $\mathcal{Z}$ of $\mathcal{Y}$ and the blowup of $\mathcal{Y}$ along $\mathcal{Z}$ which is denoted by $\mathcal{B}$. The main result is the fundamental diagram established in Proposition~\ref{resolve the rational map}, which roughly speaking says the following(See the discussions in subsection $1.4$ and subsection $3.3$ for more detail): There exists a natural morphism
$$\mathcal{Y}\backslash\mathcal{Z}\rightarrow \HS^{d}$$
and it extends to a morphism $\mathcal{B}\rightarrow \HS^{d}$, hence we have the following diagram:
\begin{equation}\label{first appearence of the diagram}
\xymatrix{
 & \mathcal{B} \ar[d]_{\pi} \ar[r]^{p} & \HS^{d}\\
\mathcal{Z} \ar[r] & \mathcal{Y} & \\
}
\end{equation}
In section $4$ we use the fundamental diagram above to establish Lemma~\ref{the main lemma}, and in the last section we use it to finish the construction of the Poincar\'e sheaf.
\end{Organization}

\subsection{Review of the construction of the Poincar\'e sheaf}
This subsection is essentially the same as subsection $1.3$ of the previous work $[13]$, we include it here for reader's convenience. We will review the construction of the Poincar\'e sheaf in $[1]$ and $[3]$. And we will also adapt the construction to our setup. Let $C$ be a reduced planar curve embedded into a smooth surface $C\hookrightarrow S$. It is well known that $\HC^{d}$ is a complete intersection in $\HS^{d}$ of codimension $d$. Let $\overline{J}$ be the stack of torsion free rank $1$ sheaves on $C$. The Poincar\'e sheaf on $\J\times\overline{J}$ can be constructed as follows. Fix an ample line bundle $N$ of $C$. We have a natural Abel-Jacobian map:
$$\HC^{d}\xrightarrow{\alpha}  \overline{J} \qquad D\rightarrow I_{D}^{\vee}\otimes N^{-m}$$
Let $U_{d}$ be the open subscheme of $\HC^{d}$ given by the condition $H^{1}(I_{d}^{\vee})=0$. Then the restriction of $\alpha$ to $U_{d}$ is smooth, and the union of the image of all $U_{d}$ in $\overline{J}$ covers $\overline{J}$. So we only need to construct Poincar\'e sheaf on $U_{d}\times \overline{J}$ and show it descends to $\overline{J}\times \overline{J}$. Let $F$ be the universal sheaf on $C\times \overline{J}$. The Hilbert scheme of the surface is denoted by $\HS^{d}$. It is well known that $\HS^{d}$ is smooth. Let $\Fl^{d}$ be the flag Hilbert scheme of $S$, which parameterizes length $d$ subschemes together with a complete flag:
$$\emptyset=D_{0}\subseteq D_{1}\subseteq\cdots\subseteq D_{d}=D$$
Consider the following diagram:
\[\begin{CD}
\HS\times \overline{J}@<{\psi\times id}<<\widetilde{\HS}\times \overline{J}@>{\sigma_{d}\times id}>>S^{d}\times \overline{J}@<{l^{d}\times id}<<C^{d}\times \overline{J}\\
@V{p_{1}}VV\\
\HS
\end{CD}\]
where $\widetilde{\HS}$ stands for the isospectral Hilbert scheme of $S$ (See $[1]$ Proposition $3.7$ or $[17]$ for the definition). It is known that $\psi$ is finite flat. Moreover, let $\HS^{' d}$ be the open subscheme of $\HS^{d}$ parameterizing subschemes that can be embedded into smooth curves, Then we have:
$$\widetilde{\HS^{d}}|_{\HS^{' d}}\simeq\Fl^{' d}=\Fl^{d}|_{\HS^{' d}}$$
Let $\mathcal{D}\xrightarrow{\pi} \HS$ be the universal finite subscheme over $\HS$ and set $\mathcal{A}=\pi_{*}O_{D}$, then we define:
\begin{equation}
Q=((\psi\times id)_{*}(\sigma_{d}\times id)^{*}(l^{d}\times id)_{*}F^{\boxtimes d})^{sign}\otimes p_{1}^{*}det(\mathcal{A})^{-1}
\end{equation}
where the upper index "sign" stands for the space of antiinvariants with respect to the natural action of the symmetric group. Then it is proved in $[1]$ that $Q$ is supported on $\HC$ and it's a maximal Cohen-Macaulay sheaf. Moreover, if we restrict $Q$ to $U_{d}$, then it descends down to $\J\times \J$. (In $[1]$ the statement is proved only for integral curves, but the same argument works for any reduced planar curves. The construction also works for families of planar curves). Let $\HS^{'}\subseteq \HS$ be the open subscheme parameterizing subchemes $D$ such that $D$ can be embedded into a smooth curve. Then we have a simpler description of the restriction of $\widetilde{\HS}$ to $\HS^{'}$ thanks to the following proposition ($[1]$ Proposition $3.7$):
\begin{Proposition}\label{rewrite using flag}
Let $\Fl^{d}$ be the flag Hilbert scheme of $S$, then we have
$$\widetilde{\HS^{d}}|_{\HS^{' d}}\simeq \Fl^{' d}=\Fl^{d}|_{\HS^{' d}}$$
\end{Proposition}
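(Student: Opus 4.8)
The plan is to exhibit both $\widetilde{\HS^{d}}|_{\HS^{'d}}$ and $\Fl^{'d}$ as finite covers of $\HS^{'d}$ of degree $d!$, to produce a natural morphism between them over $\HS^{'d}$ that is visibly an isomorphism over the dense locus of reduced subschemes, and then to upgrade this to a global isomorphism by a normality/Zariski's main theorem argument. Recall that $\widetilde{\HS^{d}}$ is the reduced fibre product $(\HS^{d}\times_{\mathrm{Sym}^{d}S}S^{d})_{\mathrm{red}}$, with $\psi$ the projection to $\HS^{d}$ and $\sigma_{d}$ the projection to $S^{d}$; by the cited structure theory of the isospectral Hilbert scheme ($[1]$, $[17]$) it is normal, and $\psi$ is finite flat of degree $d!$. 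The flag Hilbert scheme carries two natural morphisms: the forgetful map $\rho:\Fl^{d}\to\HS^{d}$, $(D_{\bullet})\mapsto D_{d}$, and the map $\tau:\Fl^{d}\to S^{d}$ sending a flag to $(p_{1},\dots,p_{d})$, where $p_{i}$ is the support of the length-one sheaf $I_{D_{i-1}}/I_{D_{i}}$. Since the $0$-cycle of $D_{d}$ equals $\sum_{i}[p_{i}]$, the pair $(\rho,\tau)$ lands in the fibre product and defines a morphism $\Phi:\Fl^{'d}\to \HS^{'d}\times_{\mathrm{Sym}^{d}S}S^{d}$ over $\HS^{'d}$.

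First I would check that $\Phi$ factors through $\widetilde{\HS^{'d}}$ and is an isomorphism over the open locus $V$ where $D$ consists of $d$ distinct reduced points: there a complete flag is precisely an ordering of the $d$ points, which is exactly an $S^{d}$-point of the fibre over $D$, so $\Phi|_{V}$ is an isomorphism of two \'etale $d!$-covers of $V$. In particular $\Phi$ is generically an isomorphism and lands in the reduced closed subscheme $\widetilde{\HS^{d}}$.

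Next I would show $\Phi$ is finite. Over $\HS^{'d}$ the map $\rho$ is proper (as $\Fl^{d}\to\HS^{d}$ is projective) and quasi-finite: a subscheme $D$ embeddable in a smooth curve is curvilinear, so at each support point $q$ of multiplicity $m$ the unique colength-one subscheme is $\operatorname{Spec}\mathcal{O}_{C}/\mathfrak{m}_{q}^{m-1}$, whence $D$ carries only finitely many complete flags; thus $\rho$ is finite of degree $d!$. Since $\psi\circ\Phi=\rho$ with $\psi$ separated, $\Phi$ is finite as well. Now $\Phi$ is a finite, generically-iso morphism from $\Fl^{'d}$ onto the normal scheme $\widetilde{\HS^{'d}}$; provided the source is reduced, $\mathcal{O}_{\widetilde{\HS^{'d}}}\hookrightarrow\Phi_{*}\mathcal{O}_{\Fl^{'d}}$ realises $\Phi_{*}\mathcal{O}_{\Fl^{'d}}$ as a finite extension of $\mathcal{O}_{\widetilde{\HS^{'d}}}$ inside the total ring of fractions with the same generic fibre, and normality forces equality, so $\Phi$ is an isomorphism. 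Compatibility with $\psi,\sigma_{d}$ and with $\rho,\tau$ is built into the construction, which is exactly what the later Poincar\'e-sheaf argument requires.

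The main obstacle is the local input feeding the last step: the reducedness — indeed smoothness — of $\Fl^{'d}$ along the deepest strata, together with the normality of $\widetilde{\HS^{d}}$ there. The worst case is a single fat point: $S=\operatorname{Spec}k[x,y]$, $C=\{y=0\}$, $D=\operatorname{Spec}k[x]/(x^{d})$ with flag $D_{i}=\operatorname{Spec}k[x]/(x^{i})$ and all $p_{i}=0$. Here one must compare the complete local rings of $\Fl^{d}$ and of $\HS^{d}\times_{\mathrm{Sym}^{d}S}S^{d}$ and verify that $\Phi$ induces an isomorphism between them — equivalently, that deforming the nested flag matches deforming the pair (subscheme, ordered support) to all orders. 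I expect this infinitesimal matching at fat points to be the technical heart, to be handled via the explicit local model of a curvilinear subscheme as an effective Cartier divisor on $C$ equipped with an ordering of its points counted with multiplicity; this model simultaneously yields the smoothness of $\Fl^{'d}$ and the reducedness of the fibre product, and thereby legitimises the Zariski's main theorem argument above.
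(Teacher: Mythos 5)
The paper does not actually prove this proposition: it is quoted from $[1]$ (Proposition $3.7$), so your proposal has to be judged against what a complete argument requires rather than against an in-paper proof. Your skeleton --- build $\Phi=(\rho,\tau):\Fl^{' d}\to\HS^{' d}\times_{\mathrm{Sym}^{d}S}S^{d}$, check it is an isomorphism over the locus of reduced subschemes, prove finiteness, and conclude by a finite-plus-generically-isomorphic-onto-normal argument --- is workable, but it contains one outright false ingredient and one missing hypothesis. The false ingredient is the closing claim that the local model yields ``the reducedness of the fibre product''. The scheme-theoretic fibre product $\HS^{d}\times_{\mathrm{Sym}^{d}S}S^{d}$ is \emph{not} reduced over the curvilinear locus, already for $d=2$ (where $\HS^{' 2}=\HS^{2}$): over the fat point $[D]$ with $D=\mathrm{Spec}\,k[x]/(x^{2})\subset\mathbb{A}^{2}$, the fibre of the naive fibre product is cut out by the $S_{2}$-invariant functions vanishing at the origin, which gives $\mathrm{Spec}\,k[x_{1},y_{1}]/(x_{1}^{2},x_{1}y_{1},y_{1}^{2})$ (char $k\neq 2$), of length $3$, whereas flatness of $\psi$ of degree $2!=2$ forces the fibre of the reduced fibre product $\widetilde{\HS^{2}}$ to have length $2$. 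This non-reducedness is exactly why Haiman defines $\widetilde{\HS^{d}}$ as the \emph{reduced} fibre product. Your main line survives, since all you need is that $\Phi$ factors through the reduction, which follows once $\Fl^{' d}$ is known to be reduced; but as written, the plan for your ``technical heart'' includes proving a false statement.

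Second, the normality step is under-hypothesized: finite, generically an isomorphism, reduced source and normal target do \emph{not} together imply isomorphism --- consider $\mathbb{A}^{1}\sqcup\{0\}\to\mathbb{A}^{1}$. You also need every irreducible component of $\Fl^{' d}$ to dominate $\widetilde{\HS^{d}}|_{\HS^{' d}}$ (equivalently, the preimage of the distinct-points locus must be dense in $\Fl^{' d}$); otherwise $\Phi_{*}O_{\Fl^{' d}}$ has torsion and the embedding into the function field fails. A priori the flag Hilbert scheme could have components concentrated over deep strata, and excluding them over $\HS^{' d}$ is part of what must be proved. Both this and the reducedness/smoothness of $\Fl^{' d}$ --- the step you defer --- come from the curvilinear chart: near $D\subset C$ one takes the chart of $\HS^{' d}$ of subschemes $V(y-f(x),g(x))$ with $\deg f\leq d-1$ and $g$ monic of degree $d$, over which a complete flag is precisely an ordering $g_{i}=\prod_{j\leq i}(x-a_{j})$, so $\Fl^{' d}$ is locally $\mathbb{A}^{d}_{f}\times\mathbb{A}^{d}_{a}$: smooth, with irreducible charts meeting the distinct-points locus. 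Note, however, that once this chart is in hand, the map $(f,a)\mapsto\bigl((f,e(a)),(a_{i},f(a_{i}))_{i}\bigr)$ is visibly a closed embedding whose image has the same support as the fibre product, so it identifies $\Fl^{' d}$ with the reduced fibre product directly, chart by chart; the Zariski's-main-theorem scaffolding and the appeal to Haiman's deep normality theorem then become unnecessary. In short: the proposal is incomplete exactly at its acknowledged heart, asserts one false auxiliary claim, and omits one hypothesis in the gluing step; all three are repaired by the same local computation, which is where the real content of the proposition lies.
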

Hence over $\HS^{'}$ we have $\Fl^{'}\simeq \widetilde{\HS}$, and the construction can be written in terms of $\Fl^{'}$.\\

We shall adapt the construction above to our setting. Namely, let $\hig$ be the stack of rank $2$ Higgs bundles on $X$ with value in $L$, and $x_{0}$ a point on $X$. We work with the family of spectral curves over $H$:
$$\xymatrix{
\mathcal{C} \ar[rr] \ar[rd] & & S\times H \ar[ld]\\
& H\\
}
$$
Let $N$ be the line bundle on $\mathcal{C}$ which is the pullback of $O(x_{0})$ on $X$. Let $\widetilde{\hig}$ be the stack of Higgs bundles with generically regular Higgs field. Then we still have Abel-Jacobian map:
$$\textrm{Hilb}^{d}_{\mathcal{C}|H}\xrightarrow{\alpha}\widetilde{\hig}:\qquad D\rightarrow I_{D}^{\vee}\otimes N^{-m}$$
Moreover, if we set $U_{d}$ to be the open subscheme of $\textrm{Hilb}^{d}_{\mathcal{C}|H}$ consists of $D$ such that $H^{1}(\check{I}_{D})=0$, then $\alpha$ is smooth on $U_{d}$. Also if we vary $d$, the image of all $U_{d}$ will cover $\widetilde{\hig}$.\\
Consider the diagram:
\[\begin{CD}
\HS\times\hig@<{\psi\times id}<<\widetilde{\HS}\times\hig@>{\sigma_{d}\times id}>>S^{d}\times\hig@<{l^{d}\times id}<<\mathcal{C}^{d}\times_{H}\hig\\
@V{p_{1}}VV\\
\HS
\end{CD}\]
where $\mathcal{C}^{d}$ is $d$ fold Cartesian product of $\mathcal{C}$ over $H$:
$$\mathcal{C}^{d}=\mathcal{C}\times_{H}\cdots\times_{H}\mathcal{C}$$
Similarly, set:
\begin{equation}\label{Q}
Q=((\psi\times id)_{*}(\sigma_{d}\times id)^{*}(l^{d}\times id)_{*}F^{\boxtimes d})^{sign}\otimes p_{1}^{*}det(\mathcal{A})^{-1}
\end{equation}
Then by essentially the same argument, we get a Cohen-Macaulay sheaf $Q$ of codimension $d$ on $\HS^{d}\times\hig$. If we denote the open subscheme of $H$ corresponding to reduced spectral curves by $H_{r}$, and its complement by $H_{nr}$. Then over $H_{r}$, the sheaf $Q$ is supported on $\textrm{Hilb}^{d}_{\mathcal{C}|H_{r}}\times_{H_{r}}\hig|_{H_{r}}$. It is not hard to check that $H_{nr}$ has codimension $2l+1-g$ in $H$. Also since $\hig$ is flat over $H$, the complement of $\HS^{d}\times\hig|_{H_{r}}$ also has codimension $2l+1-g$. From the construction in $[1]$ it is not hard to check that the codimension of
$$\textrm{Supp}(Q)\cap\HS^{d}\times\hig|_{H_{nr}}$$
is greater than or equals to $d+2l+1-g$. Since $Q$ is Cohen-Macaulay of codimension $d$, the support of $Q$ is of pure dimension without embedded components and has codimension equal to $d$ in $\HS^{d}\times\hig$ (Proposition~\ref{codimension of supp of CM sheaves}). We conclude that $\textrm{Hilb}^{d}_{\mathcal{C}|H_{r}}\times_{H_{r}}\hig|_{H_{r}}$ is a dense open subset in $\textrm{Supp}(Q)$ with the codimension of the complement greater than or equals to $2l+1-g$. Hence $Q$ is supported on $\textrm{Hilb}^{d}_{\mathcal{C}|H}\times_{H}\hig$ over the entire $H$. In conclusion, we have:
\begin{Proposition}\label{definition of Q}
Let $Q$ be the sheaf on $\HS^{d}\times\hig$ given by formula~\ref{Q}, then
\begin{enumerate1}
\item $Q$ is a maximal Cohen-Macaulay sheaf of codimension $d$ on $\HS^{d}\times\hig$ supported on $\textrm{Hilb}^{d}_{\mathcal{C}|H}\times_{H}\hig$.
\item If we consider its restriction to $U_{d}\times_{H}\hig$ (Recall that $U_{d}$ is the open subscheme of $\textrm{Hilb}^{d}_{\mathcal{C}|H}$ consists of $D$ such that $H^{1}(\check{I}_{D})=0$), then it descend to $\widetilde{\hig}\times_{H}\hig$ and agrees with the Poincar\'e line bundle on $\hig^{reg}\times_{H}\hig$.
\end{enumerate1}
\end{Proposition}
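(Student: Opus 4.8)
The plan is to transport the construction of $[1]$ to the relative situation over the Hitchin base $H$. The structural inputs are that $\HS^{d}$ is smooth, that the map $\psi$ from the isospectral Hilbert scheme to $\HS^{d}$ is finite flat, and that in $[1]$ the sign-isotypic component appearing in formula~\ref{Q} is shown to be a maximal Cohen--Macaulay sheaf of codimension $d$. All of the operations in formula~\ref{Q} --- the pushforwards along the finite maps $l^{d}\times id$ and $\psi\times id$, the pullback along $\sigma_{d}\times id$, and the twist by $p_{1}^{*}\det(\mathcal{A})^{-1}$ --- are defined over all of $H$ and commute with base change, and since the argument of $[1]$ carries through for families of planar curves, applying it to $\mathcal{C}\to H$ shows that $Q$ is a maximal Cohen--Macaulay sheaf of codimension $d$ on $\HS^{d}\times\hig$. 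This establishes the Cohen--Macaulay assertion of part (1) over the whole of $H$, the reducedness of the spectral curve playing no role here.

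For the support statement in part (1) I would argue by a dimension count, as in the paragraph preceding the proposition. Over the reduced locus $H_{r}$ the result of $[1]$ identifies $\textrm{Supp}(Q)$ with $\textrm{Hilb}^{d}_{\mathcal{C}|H_{r}}\times_{H_{r}}\hig|_{H_{r}}$. To extend this over all of $H$, I first observe by Riemann--Roch that the everywhere-nonreduced (double line) spectral curves are cut out by the condition $a_{1}^{2}=4a_{2}$, hence are parametrised by $a_{1}\in H^{0}(X,L)$ and form a locus of codimension $h^{0}(X,L^{2})=2l+1-g$ in $H$; by flatness of $\hig\to H$ the same codimension holds for $\HS^{d}\times\hig|_{H_{nr}}$. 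The estimate of $[1]$ then gives that $\textrm{Supp}(Q)\cap(\HS^{d}\times\hig|_{H_{nr}})$ has codimension at least $d+2l+1-g$. Since $Q$ is Cohen--Macaulay of codimension $d$, Proposition~\ref{codimension of supp of CM sheaves} forces $\textrm{Supp}(Q)$ to be pure of codimension $d$ with no embedded components; the strictly larger codimension over $H_{nr}$ then prevents any component from lying over $H_{nr}$, so $\textrm{Supp}(Q)$ is the closure of its restriction to $H_{r}$. As $\textrm{Hilb}^{d}_{\mathcal{C}|H}$ is closed over all of $H$, this closure lies inside $\textrm{Hilb}^{d}_{\mathcal{C}|H}\times_{H}\hig$, proving the support statement.

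For part (2) I would work over the open subscheme $U_{d}$ on which the Abel--Jacobi map $\alpha$ is smooth, and over $\HS^{'}$ where Proposition~\ref{rewrite using flag} replaces the isospectral Hilbert scheme by the flag Hilbert scheme $\Fl^{'}$. This yields an explicit filtration description of $Q$ from which one can build a descent datum for $Q|_{U_{d}\times_{H}\hig}$ along $\alpha\times id$: the fibres of $\alpha$ are the projective spaces of sections realizing a given sheaf as $I_{D}^{\vee}\otimes N^{-m}$, and the role of the twist $p_{1}^{*}\det(\mathcal{A})^{-1}$ in formula~\ref{Q} is precisely to trivialize the dependence of $Q$ on the choice of $D$ along these fibres. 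Granting effective descent, one obtains a sheaf on $\widetilde{\hig}\times_{H}\hig$; restricting to $\hig^{reg}$, where the sheaves are honest line bundles, a direct computation using cohomology and base change identifies it with the determinant-of-cohomology expression in formula~\ref{Poincare line bundle}, using the equivariance recorded in Lemma~\ref{equivariance property}.

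The main obstacle is the descent in part (2). The Cohen--Macaulay and support statements follow fairly mechanically from $[1]$ together with the codimension bookkeeping, but verifying that the sign-construction $Q$ actually descends along the smooth, non-proper map $\alpha$ --- that is, producing the descent datum and checking the cocycle condition --- is the delicate point, and it is exactly here that the flag Hilbert scheme rewriting of Proposition~\ref{rewrite using flag} and the equivariance property are needed to make the fibrewise triviality explicit.
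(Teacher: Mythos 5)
Your proposal is correct and follows essentially the same route as the paper: Cohen--Macaulayness is imported from the family version of the construction in $[1]$, the support statement is proved by the identical dimension count (codimension $2l+1-g$ of $H_{nr}$, the bound $d+2l+1-g$ on the support over the nonreduced locus, and purity via Proposition~\ref{codimension of supp of CM sheaves}), and the descent in part (2) is exactly what the paper also delegates to the argument of $[1]$ working for families of reduced planar curves. The step you flag as delicate is thus not a gap relative to the paper's own proof, and your Riemann--Roch justification of the codimension of $H_{nr}$ fills in a detail the paper leaves to the reader.
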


Using the fact that the complement of $\hig^{reg}$ has codimension greater than or equals to $2$, we conclude from Proposition~\ref{uniqueness of CM sheaf} that the maximal Cohen-Macaulay sheaves on $\widetilde{\hig}\times_{H}\hig$ and $\hig\times_{H}\widetilde{\hig}$ constructed from the previous Proposition glues together, hence we have the following:
\begin{corollary}\label{CM sheaf constructed in 1}
We have a maximal Cohen-Macaulay sheaf on
$$\widetilde{\hig}\times_{H}\hig\cup\hig\times_{H}\widetilde{\hig}$$ which agrees with the Poincar\'e line bundle on $$\hig^{reg}\times_{H}\hig\cup\hig\times_{H}\hig^{reg}$$
\end{corollary}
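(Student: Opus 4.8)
The plan is to produce a canonical isomorphism between the two maximal Cohen--Macaulay sheaves over their common domain and then glue; since only two charts are involved, no cocycle condition intervenes. Write $P_{1}$ for the maximal Cohen--Macaulay sheaf on $\widetilde{\hig}\times_{H}\hig$ supplied by Proposition~\ref{definition of Q}, and let $P_{2}$ denote the analogous sheaf on $\hig\times_{H}\widetilde{\hig}$ obtained by running the identical construction with the two factors interchanged. The two open substacks overlap in $\widetilde{\hig}\times_{H}\widetilde{\hig}$, so it suffices to exhibit an isomorphism $P_{1}|_{\widetilde{\hig}\times_{H}\widetilde{\hig}}\simeq P_{2}|_{\widetilde{\hig}\times_{H}\widetilde{\hig}}$ that restricts to the identity of the Poincar\'e line bundle wherever the latter is defined.

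First I would record that $\hig^{reg}\subseteq\widetilde{\hig}$: a Higgs bundle that is a line bundle on its spectral curve has everywhere regular, in particular generically regular, Higgs field. Consequently, on the overlap the locus
\[
V=\hig^{reg}\times_{H}\widetilde{\hig}\;\cup\;\widetilde{\hig}\times_{H}\hig^{reg}
\]
is exactly the open substack on which the Poincar\'e line bundle of formula~\ref{Poincare line bundle} is defined. Its complement inside $\widetilde{\hig}\times_{H}\widetilde{\hig}$ consists of the pairs whose two entries both lie off $\hig^{reg}$; since the complement of $\hig^{reg}$ has codimension at least $2$ (and this survives restriction to $\widetilde{\hig}$ and passage to the relative product over $H$), the complement of $V$ has codimension at least $2$. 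By Proposition~\ref{definition of Q}(2), both $P_{1}$ and $P_{2}$ restrict on $V$ to this one Poincar\'e line bundle, and both are maximal Cohen--Macaulay on the overlap.

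Next I would invoke Proposition~\ref{uniqueness of CM sheaf}. Because $V$ has complement of codimension $\geq 2$ and $\widetilde{\hig}\times_{H}\widetilde{\hig}$ satisfies the Cohen--Macaulay hypotheses of that statement, a maximal Cohen--Macaulay extension of $\mathcal{P}|_{V}$ is unique, each such extension being recovered as the pushforward $j_{*}(\mathcal{P}|_{V})$ along the open immersion $j\colon V\hookrightarrow\widetilde{\hig}\times_{H}\widetilde{\hig}$ via the $S_{2}$ property. Applying this to both $P_{1}|_{\cap}$ and $P_{2}|_{\cap}$ shows that each equals $j_{*}(\mathcal{P}|_{V})$, which furnishes the desired canonical isomorphism $\phi$; by construction $\phi$ is the identity on $\mathcal{P}|_{V}$. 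Gluing $P_{1}$ and $P_{2}$ along $\phi$ yields a sheaf $\mathcal{P}$ on $\widetilde{\hig}\times_{H}\hig\cup\hig\times_{H}\widetilde{\hig}$. Since being maximal Cohen--Macaulay is a local (stalkwise) condition, $\mathcal{P}$ inherits it from the two charts, and since each chart restricts to the Poincar\'e line bundle on the corresponding regular locus, $\mathcal{P}$ agrees with it on $\hig^{reg}\times_{H}\hig\cup\hig\times_{H}\hig^{reg}$.

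The one point demanding genuine care is the codimension bookkeeping that licenses Proposition~\ref{uniqueness of CM sheaf} on the overlap: I must verify that the complement of $\hig^{reg}$ really is of codimension $\geq 2$ and that this estimate is preserved under the fiber product over $H$, so that the ``both entries off $\hig^{reg}$'' locus is genuinely of codimension $\geq 2$ and the $S_{2}$ recovery applies. Alongside this, I would confirm that $\phi$ is compatible with the tautological identification of the Poincar\'e line bundle on $V$, so that the glued object is a \emph{bona fide} extension of that line bundle rather than merely an abstract sheaf agreeing with it up to an uncontrolled twist. Everything else in the argument is formal.
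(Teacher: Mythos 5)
Your proof is correct and takes essentially the same route as the paper, whose (one-line) argument likewise glues the two maximal Cohen--Macaulay sheaves of Proposition~\ref{definition of Q} by means of Proposition~\ref{uniqueness of CM sheaf} and the codimension-$\geq 2$ bound on the complement of $\hig^{reg}$. One small caveat: Proposition~\ref{definition of Q}(2) literally identifies $P_{1}$ with the Poincar\'e line bundle only on $\hig^{reg}\times_{H}\widetilde{\hig}$ and $P_{2}$ only on $\widetilde{\hig}\times_{H}\hig^{reg}$, so you should either glue over $\hig^{reg}\times_{H}\hig^{reg}$ (whose complement in $\widetilde{\hig}\times_{H}\widetilde{\hig}$ still has codimension $\geq 2$), or first extend each identification to all of your $V$ by one more application of Proposition~\ref{uniqueness of CM sheaf}.
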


For later use, we will also denote
\begin{equation}\label{Q'}
Q'=((\sigma_{d}\times id)^{*}(l^{d}\times id)_{*}F^{\boxtimes d})\otimes (id\times\psi)^{*}(p_{1}^{*}det(\mathcal{A})^{-1})
\end{equation}
The following lemma is clear from the formula~\ref{Q}:
\begin{lemma}\label{summand}
$Q$ is a direct summand of $(\psi\times id)_{*}(Q')$
\end{lemma}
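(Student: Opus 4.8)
The plan is to deduce the statement from the projection formula combined with the splitting of isotypic components of a symmetric-group action in characteristic zero. To fix notation, write $G := (\sigma_{d}\times id)^{*}(l^{d}\times id)_{*}F^{\boxtimes d}$ for the sheaf on $\widetilde{\HS}\times\hig$ that appears inside both formulas, so that by definition
\[
Q = \bigl((\psi\times id)_{*}G\bigr)^{sign}\otimes p_{1}^{*}\det(\mathcal{A})^{-1},
\qquad
Q' = G \otimes (\psi\times id)^{*}\bigl(p_{1}^{*}\det(\mathcal{A})^{-1}\bigr).
\]
Since $p_{1}^{*}\det(\mathcal{A})^{-1}$ is a line bundle on $\HS\times\hig$ and $Q'$ is $G$ tensored by its pullback along $\psi\times id$, the first step is to apply the projection formula, which gives the isomorphism $(\psi\times id)_{*}Q' \cong (\psi\times id)_{*}G \otimes p_{1}^{*}\det(\mathcal{A})^{-1}$. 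Thus the whole problem is reduced to showing that $\bigl((\psi\times id)_{*}G\bigr)^{sign}$ is a direct summand of $(\psi\times id)_{*}G$.

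Next I would make the $S_{d}$-action explicit. The external product $F^{\boxtimes d}$ carries the natural action of the symmetric group permuting its factors, and both $l^{d}$ and $\sigma_{d}$ are $S_{d}$-equivariant, so $G$ is an $S_{d}$-equivariant sheaf on $\widetilde{\HS}\times\hig$. Because $\psi$ is $S_{d}$-invariant, the pushforward $(\psi\times id)_{*}G$ inherits an $S_{d}$-action over the trivially acted base $\HS\times\hig$. Working in characteristic zero, the averaged sign-idempotent $e = \frac{1}{d!}\sum_{\tau\in S_{d}}\mathrm{sgn}(\tau)\,\tau$ is a well-defined endomorphism with $e^{2}=e$, whose image is precisely the sign-isotypic summand $\bigl((\psi\times id)_{*}G\bigr)^{sign}$. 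Hence one obtains a direct-sum decomposition $(\psi\times id)_{*}G \cong \bigl((\psi\times id)_{*}G\bigr)^{sign}\oplus \ker(e)$, exhibiting the sign part as a direct summand.

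Finally, tensoring this decomposition by the line bundle $p_{1}^{*}\det(\mathcal{A})^{-1}$ preserves the direct-sum structure, and comparing with the projection-formula identity of the first step identifies the sign-summand with $Q$ and the total sheaf with $(\psi\times id)_{*}Q'$. This proves that $Q$ is a direct summand of $(\psi\times id)_{*}Q'$. There is essentially no serious obstacle here — as the paper notes, the claim is clear from the defining formulas — and the only points demanding care are purely formal: verifying that every map in the defining diagram is genuinely $S_{d}$-equivariant so that the sign projector is a morphism of coherent sheaves, and that the ground field has characteristic zero so that the idempotent $e$ exists. Both are built into the standing setup.
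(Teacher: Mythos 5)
Your proof is correct and is exactly the reasoning the paper leaves implicit: the paper offers no argument beyond saying the lemma is clear from formula~\ref{Q}, and your combination of the projection formula along the finite flat map $\psi\times id$ with the splitting given by the sign idempotent $e=\frac{1}{d!}\sum_{\tau}\mathrm{sgn}(\tau)\,\tau$ (valid in characteristic zero) is the standard way to make that claim precise. Note also that you correctly read the paper's $(id\times\psi)^{*}$ in formula~\ref{Q'} as the typo it is, since the relevant morphism on $\widetilde{\HS}\times\hig$ is $\psi\times id$.
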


For the purpose of the last section, let us also record the following equivariance properties of the Poincar\'e sheaf established in $[1]$. All the properties still hold for the Poincar\'e sheaf in Corollary~\ref{CM sheaf constructed in 1}, because of Proposition~\ref{uniqueness of CM sheaf}.
\begin{lemma}\label{equivariance property}
\begin{enumerate1}
\item Let $L$ be a line bundle on $C$. Consider the automorphism $\mu_{L}$ of $\J$ defined by $F\rightarrow F\otimes L$, then we have that:
$$(\mu_{L}\times id)^{*}\mathcal{P}\simeq \mathcal{P}\otimes p_{2}^{*}(\mathcal{P}_{L})$$
where $\mathcal{P}_{L}$ is a line bundle on $\J$ obtained by restriction of the Poincar\'e line bundle to $\{L\}\times\J\hookrightarrow J\times\J$
\item Let $\nu$ be the involution of $\J$ given by $F\rightarrow \check{F}=\Hom_{O_{C}}(F,O_{C})$. Consider:
$$\J\times\J\xrightarrow{\nu\times\textrm{id}}\J\times\J$$
Then we have:
$$(\nu\times\textrm{id})^{*}(\mathcal{P})\simeq\check{\mathcal{P}}=\Hom(\mathcal{P},O)$$
\item Consider the diagram:
$$\xymatrix{
J\times\J & J\times\J\times\J \ar[l]_{p_{13}} \ar[r]^{p_{23}} \ar[d]^{\mu\times\textrm{id}} & \J\times\J\\
 & \J\times\J
}
$$
where $J\times\J\xrightarrow{\mu}\J$ is given by $(L,F)\rightarrow L\otimes F$. Then we have
$$(\mu\times\textrm{id})^{*}(\mathcal{P})\simeq p_{13}^{*}(\mathcal{P})\otimes p_{23}^{*}(\mathcal{P})$$
\end{enumerate1}
\end{lemma}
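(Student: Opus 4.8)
The plan is to verify each of the three identities first on the open locus where at least one of the two factors parametrizes genuine line bundles, where $\mathcal{P}$ is the Poincar\'e \emph{line} bundle of formula~\ref{Poincare line bundle}, and then to propagate the isomorphisms to all of $\J\times\J$ (resp.\ $J\times\J\times\J$) by the uniqueness of maximal Cohen--Macaulay extensions of Proposition~\ref{uniqueness of CM sheaf}. The point is that on $J\times\J\cup\J\times J$ formula~\ref{Poincare line bundle} exhibits $\mathcal{P}$ as the normalized determinant of cohomology, i.e.\ the Deligne pairing $\langle F,G\rangle$ of the two universal sheaves; this pairing is bi-multiplicative and trivial along the identity sections, and all three statements are formal consequences of bi-multiplicativity together with the behaviour of the pairing under duality.

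First I would isolate the two inputs on the line bundle locus. Bi-multiplicativity of $\de(R\pi_*(-))$ gives, for a line bundle $L$ on $C$, a canonical isomorphism $\langle F\otimes L,G\rangle\simeq\langle F,G\rangle\otimes\langle L,G\rangle$, valid whenever one of $F,G$ is locally free; taking $\langle O_C,G\rangle$ canonically trivial then yields $\langle F^{-1},G\rangle\simeq\langle F,G\rangle^{-1}$. Property (1) is immediate on $J\times\J\cup\J\times J$, since $\langle L,G\rangle$, as a function of $G$, is exactly $p_2^*\mathcal{P}_L$. Property (2) follows because on this locus $\check F=F^{-1}$, so $(\nu\times\mathrm{id})^*\mathcal{P}=\langle F^{-1},G\rangle\simeq\langle F,G\rangle^{-1}=\check{\mathcal{P}}$. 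Property (3) is the relative version of (1) as the line bundle varies: on the locus where the second factor is also locally free, bi-multiplicativity applied to $L\otimes F$ gives $(\mu\times\mathrm{id})^*\mathcal{P}=\langle L\otimes F,G\rangle\simeq\langle L,G\rangle\otimes\langle F,G\rangle=p_{13}^*\mathcal{P}\otimes p_{23}^*\mathcal{P}$.

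It remains to extend these isomorphisms across the non--locally-free loci, for which I would check that both sides of each identity are maximal Cohen--Macaulay and agree on an open set whose complement has codimension $\geq 2$. On the right-hand sides this is transparent: $p_2^*\mathcal{P}_L$ in (1) is a line bundle, and in (3) $p_{13}^*\mathcal{P}$ is a line bundle precisely because $\mathcal{P}$ on $J\times\J$ is the Poincar\'e \emph{line} bundle, while $p_{23}^*\mathcal{P}$ is the pullback of a maximal Cohen--Macaulay sheaf along the projection off the smooth factor $J$; hence each right-hand side is a maximal Cohen--Macaulay sheaf twisted by a line bundle, again maximal Cohen--Macaulay. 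On the left-hand sides, $\mu_L\times\mathrm{id}$ and $\nu\times\mathrm{id}$ are automorphisms and $\mu\times\mathrm{id}$ is smooth, being built from the tensoring action of the group $J$, so each pullback preserves the maximal Cohen--Macaulay property. Since $\mathcal{P}$ is already the line bundle on $J\times\J\cup\J\times J$, whose complement in $\J\times\J$ has codimension $\geq 2$, Proposition~\ref{uniqueness of CM sheaf} forces the isomorphisms verified on that locus to extend uniquely to the whole product, and likewise for (3) on $J\times\J\times\J$.

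The step I expect to be most delicate is property (2): identifying $\check{\mathcal{P}}=\Hom(\mathcal{P},O)$ as a maximal Cohen--Macaulay sheaf requires a duality statement on $\J\times\J$, so I would first confirm that $\mathcal{P}$ is reflexive and that $O$-duality preserves the maximal Cohen--Macaulay property here, using that the compactified Jacobian of a planar curve is Gorenstein (or, failing that, passing to the dualizing sheaf and absorbing the resulting twist). Once that is in place the agreement on the line bundle locus together with Proposition~\ref{uniqueness of CM sheaf} closes the argument; the analogous codimension bookkeeping for (3) on the triple product is routine, since the first factor $J$ is smooth and contributes only line bundle twists.
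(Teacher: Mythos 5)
Your overall strategy --- verify each identity on the locus where formula~\ref{Poincare line bundle} defines $\mathcal{P}$ as a line bundle, then extend across the whole product by Proposition~\ref{uniqueness of CM sheaf} --- is the right one; indeed the paper offers no proof at all, merely quoting these properties from $[1]$ and invoking the same uniqueness proposition, and your line-bundle-locus computations via bi-multiplicativity of the determinant of cohomology are the standard argument. Your codimension count is also correct where you state it: the complement of $J\times\J\cup\J\times J$ is $(\J\setminus J)\times(\J\setminus J)$, which has codimension $\geq 2$ even though $\J\setminus J$ itself may have codimension only $1$ (e.g.\ for nodal curves).

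Precisely because $\J\setminus J$ can have codimension $1$, however, your verification of (2) is carried out on too small a locus. You check the identity only where $\check F=F^{-1}$, i.e.\ on $J\times\J$; its complement $(\J\setminus J)\times\J$ can have codimension $1$, so Proposition~\ref{uniqueness of CM sheaf} cannot be invoked from $J\times\J$ alone. You must also verify $(\nu\times\mathrm{id})^{*}\mathcal{P}\simeq\check{\mathcal{P}}$ on $\J\times J$, where the first argument $F$ is allowed to be non-locally-free. There your derivation of the inversion formula breaks down: it rests on $\langle F\otimes F^{-1},G\rangle\simeq\langle O_{C},G\rangle$ being trivial, but for non-locally-free $F$ the natural map $\check F\otimes F\to O_{C}$ is not an isomorphism, so $\langle\check F,G\rangle\simeq\langle F,G\rangle^{-1}$ requires a separate argument. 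The standard fix is to write the line bundle $G$ coming from the $J$ factor as $O_{C}(D-D')$ with $D,D'$ effective divisors supported in the smooth locus of $C$; then $\langle F,G\rangle$ reduces to $\det(F|_{D})\otimes\det(F|_{D'})^{-1}$ up to factors independent of $F$, and since $\check F$ agrees with the inverse of the line bundle $F|_{C^{sm}}$ on the smooth locus, $\langle\check F,G\rangle\simeq\langle F,G\rangle^{-1}$ follows. The same bookkeeping slip occurs in (3): you verify only on $J\times J\times\J$, whose complement $J\times(\J\setminus J)\times\J$ may again have codimension $1$, so you also need the identity on $J\times\J\times J$; there, however, it does follow from the bi-multiplicativity statement you already assert (one of the two arguments, namely $G$, is locally free), so for (3) this is an omitted check rather than a missing idea. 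By contrast, the point you flag as delicate --- that $\check{\mathcal{P}}=\Hom(\mathcal{P},O)$ is maximal Cohen--Macaulay --- is already covered by the paper's standing Gorenstein assumption in Section 2.2, where duality is stated to preserve maximal Cohen--Macaulay sheaves.
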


\subsection{Motivation of the construction and a reformulation of the main theorem}
In this subsection we will sketch the idea of the construction of the Poincar\'e sheaf. The motivation comes from Drinfeld's construction of the automorphic sheaf which we shall briefly recall.(See $[11]$ and $[12]$)
Let $X$ be a smooth projective curve over a finite field, and $\mathcal{E}$ an irreducible rank two $l$-adic local system on $X$. Denote the stack of rank two vector bundles of $X$ by $\textrm{Bun}_{2}$. In $[11]$ Drinfeld constructed an automorphic perverse sheaf $\textrm{Aut}_{\mathcal{E}}$ on $\textrm{Bun}_{2}$ via the following procedure. Let $d$ be an integer greater than $4g-4$. Let $S=\Pd$ be the Picard scheme of $X$ corresponding to degree $d$ line bundles. Let $P=X^{(d)}$. It is well known that the symmetric power $X^{(d)}$ is a projective bundle over $\Pd$. Let $\check{P}$ be the dual projective bundle. By definition, $\check{P}$ classifies nontrivial extensions up to scalers:
$$0\rightarrow \Omega^{1}_{X}\rightarrow L_{2}\rightarrow L_{1}\rightarrow 0$$
$\check{P}$ is equipped with natural morphisms:
$$\check{P}\rightarrow \textrm{Bun}_{2}:  \{0\rightarrow \Omega^{1}_{X}\rightarrow L_{2}\rightarrow L_{1}\rightarrow 0\} \rightarrow L_{2}$$
$$\check{P}\rightarrow S: \{0\rightarrow \Omega^{1}_{X}\rightarrow L_{2}\rightarrow L_{1}\rightarrow 0\} \rightarrow L_{1}$$
There exists a nonempty open subscheme $U\subseteq \check{P}$ such that the morphism $U\rightarrow \textrm{Bun}_{2}$ is smooth. Let $Z$ be the scheme of universal hyperplane in $P\times_{S}\check{P}$. Hence we have the following commutative diagram:
$$\xymatrix{
 & Z \ar[ld]_{\check{\rho}} \ar[rd]^{\rho} & \\
\check{P} \ar[rd]_{\check{\pi}} \ar[d]_{\sigma} & & P \ar[ld]^{\pi}\\
\textrm{Bun}_{2} & S\\
}
$$
Let $\mathcal{E}^{(d)}$ be Laumon's sheaf on $P=X^{(d)}$. Also, $\textrm{det}(\mathcal{E})$ determines a rank $1$ local system on $\textrm{Pic}_{X}$, denote its fiber at $\Omega^{1}_{X}$ by $\mathcal{K}$. Then we have the following(Proposition $4.2.4$ and Remark $5.2$ of $[12]$):
\begin{Proposition}
\begin{enumerate1}
\item The Radon transform of $\mathcal{E}$ given by
$$R\check{\rho}_{*}\rho^{*}(\mathcal{E}^{(d)}[d])[d-g-1]$$
is an irreducible perverse sheaf on $\check{P}$.
\item The restriction of the perverse sheaf
$$\widetilde{K}_{\mathcal{E}}=\mathcal{K}\otimes R\check{\rho}_{*}\rho^{*}(\mathcal{E}^{(d)}[d])[d-g-1]$$
to $U$ descends down to $\textrm{Bun}_{2}$ and this gives the automorphic sheaf $\textrm{Aut}_{\mathcal{E}}$.
\end{enumerate1}
\end{Proposition}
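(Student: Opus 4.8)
The plan is to treat the two assertions separately. Both rest on general facts about the Radon transform between a projective bundle and its dual, combined with the specific geometry of Laumon's sheaf $\mathcal{E}^{(d)}$.

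For part (1), I would take as input that $\mathcal{E}^{(d)}$ is an irreducible perverse sheaf on $P=X^{(d)}$ after the shift $[d]$ (this is a theorem about Laumon's sheaf, proved elsewhere). The relevant geometry is the incidence correspondence $Z\subseteq P\times_{S}\check{P}$ with its two projections: $\rho\colon Z\to P$ is a smooth projective fibration with connected fibers $\mathbf{P}^{d-g-1}$ (hyperplanes through a fixed point), so $\rho^{*}[d-g-1]$ is perverse $t$-exact and preserves irreducibility; while $\check{\rho}\colon Z\to\check{P}$ is proper. The structural input is Brylinski's theorem on the Radon--Penrose transform in the relative setting: the functor $R\check{\rho}_{*}\rho^{*}[d-g-1]$ is perverse $t$-exact and induces an equivalence on the quotient of the category of perverse sheaves by the thick subcategory of those pulled back from the base $S$ (equivalently, constant along the fibers of $\pi$). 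It then remains only to check that $\mathcal{E}^{(d)}[d]$ is not of this exceptional type, which follows from the irreducibility and nonconstancy of $\mathcal{E}$; hence its Radon transform is again an irreducible perverse sheaf on $\check{P}$.

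For part (2), the task is to descend $\widetilde{K}_{\mathcal{E}}$ from $U$ to $\textrm{Bun}_{2}$ along the smooth morphism $\sigma|_{U}$. I would invoke smooth descent for perverse sheaves: it suffices to exhibit an isomorphism between the two pullbacks of $\widetilde{K}_{\mathcal{E}}|_{U}$ to $U\times_{\textrm{Bun}_{2}}U$ satisfying the cocycle condition, which amounts to showing that $\widetilde{K}_{\mathcal{E}}|_{U}$ is constant along the fibers of $\sigma$. These fibers record, for a fixed rank-two bundle $L_{2}$, the various presentations $0\to\Omega^{1}_{X}\to L_{2}\to L_{1}\to 0$; the point is that the transported sheaf should depend only on the isomorphism class of $L_{2}$, with the twist by $\mathcal{K}=\det(\mathcal{E})|_{\Omega^{1}_{X}}$ supplying exactly the correction that cancels the dependence on the chosen quotient line bundle $L_{1}$.

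The main obstacle will be this last compatibility. The perversity and irreducibility in part (1) are essentially formal once Brylinski's theorem is in hand, but the constancy along the fibers of $\sigma$ in part (2) is the heart of the matter and does not follow formally. Establishing it requires unwinding the modular description of $\check{P}$, the explicit symmetrization defining $\mathcal{E}^{(d)}$, and the factorization of $\mathcal{E}^{(d)}$ under the addition maps $X^{(d)}\times X^{(d')}\to X^{(d+d')}$, together with the vanishing that forces the descent datum to be well defined and to satisfy the cocycle condition.
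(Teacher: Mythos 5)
First, a point of comparison: the paper does not prove this Proposition at all. It is quoted as known background --- Drinfeld's construction, cited in the text as Proposition $4.2.4$ and Remark $5.2$ of $[12]$ (with the original construction in $[11]$) --- and serves only to motivate the fundamental diagram used later for the Poincar\'e sheaf. So your attempt can only be measured against the original sources, not against an argument in this paper.

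Measured that way, your part (1) is the standard argument and is sound in outline: one inputs Laumon's theorem that $\mathcal{E}^{(d)}[d]$ is an irreducible perverse sheaf on $X^{(d)}$, observes that $\rho$ is a $\mathbf{P}^{d-g-1}$-bundle so that $\rho^{*}[d-g-1]$ is exact and faithful on perverse sheaves, and applies Brylinski's theorem that the Radon transform between dual projective bundles is perverse $t$-exact and an equivalence modulo perverse sheaves pulled back from the base $S=\textrm{Pic}^{d}$; irreducibility and nontriviality of $\mathcal{E}$ rule out the degenerate case. The genuine gap is in part (2), and it is twofold. First, the mechanism you propose is wrong: $\mathcal{K}$ is the fiber of the rank-one local system $\textrm{det}(\mathcal{E})$ at the single point $\Omega^{1}_{X}\in\textrm{Pic}_{X}$, i.e.\ a \emph{constant} one-dimensional twist, so tensoring by it cannot cancel any dependence on the quotient $L_{1}$, which genuinely varies along the fibers of $\sigma$ (the fiber over a fixed $L_{2}$ is an open subset of $\mathbf{P}(\textrm{Hom}(\Omega^{1}_{X},L_{2}))$, and distinct points give distinct sub-line-bundles and distinct quotients); the role of $\mathcal{K}$ is only a normalization of the descended sheaf. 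Second, the descent is not a formal application of smooth descent plus a cocycle check: the existence of the descent datum \emph{is} the content of Drinfeld's theorem, and the known proofs obtain it from the Hecke eigensheaf property of the Radon-transformed sheaf, established through the Whittaker/Fourier-coefficient description of $\mathcal{E}^{(d)}$ and the associated vanishing (cuspidality) theorems --- not from the projective-duality geometry alone. Your closing paragraph names these ingredients, but it supplies neither the statement of the required vanishing nor the construction of the isomorphism between the two pullbacks, so part (2) as written is an accurate description of the difficulty rather than a proof of it.
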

Since we expect the Poincar\'e sheaf on the stack of Higgs bundles to be the classical limit of the automorphic sheaf, it should be possible to construct the Poincar\'e sheaf by modifying the Radon transform construction to the case of Higgs sheaves. Hence it is natural to consider the following(This diagram is contained in the appendix of $[12]$):
$$\xymatrix{
 & T^{*}_{Z}(\check{P}\times P) \ar[ld]_{\pi} \ar[rd]^{p} & \\
T^{*}\check{P} & & T^{*}P\\
}
$$
It is not hard to see that $T^{*}P$ classifies the data $(D,t_{D})$ where $D$ is a length $d$ subscheme of $X$, and $t_{D}\in H^{0}(\Omega^{1}_{X}\otimes O_{D})$. Hence if we denote the cotangent bundle of $X$ by $S$, $T^{*}P$ can be naturally identified as an open subscheme of $\HS^{d}$ by embedding $D$ into $S$ using the section $t_{D}$. Similarly, $T^{*}\check{P}$ classifies the data:
$$0\rightarrow \Omega^{1}_{X}\rightarrow L_{2}\rightarrow L_{1}\rightarrow 0, \varphi\in\textrm{Hom}(L_{2},L_{1}\otimes \Omega^{1}_{X})$$
Let $\mathcal{U}$ be the open subscheme of $T^{*}\check{P}$ satisfying the condition $H^{0}(\check{L}_{2}\otimes \Omega^{2}_{X})=0$. Also, let $\hig'$ be the stack classifying the data:
$$(E,\phi); 0\rightarrow \Omega^{1}_{X}\hookrightarrow E$$
where $(E,\phi)$ is a rank two Higgs bundle with value in $\Omega^{1}_{X}$, and $\Omega^{1}_{X}\hookrightarrow E$ is a subsheaf such that the quotient is a line bundle, and we require that $H^{0}(\check{E}\otimes\Omega^{2}_{X})=0$. Then arguing as Lemma~\ref{elementary properties of Y} we see that $\hig'$ is naturally a closed subscheme of $\mathcal{U}$. Projective duality implies that there are natural isomorphisms:
$$T^{*}\check{P}\backslash T^{*}S\times_{S}\check{P}\simeq T^{*}_{Z}(\check{P}\times P)\backslash T^{*}_{S}(S\times S)\times_{S}Z\simeq T^{*}P\backslash T^{*}S\times_{S}P$$
Hence it induces:
$$(T^{*}\check{P}\backslash T^{*}S\times_{S}\check{P})\times\hig\simeq(T^{*}P\backslash T^{*}S\times_{S}P)\times\hig$$
Moreover, if we denote the universal spectral curve by $\mathcal{C}$, then under this isomorphism, the image of $\textrm{Hilb}^{d}_{\mathcal{C}|H}\times_{H}\hig$ in the $(T^{*}P\backslash T^{*}S\times_{S}P)\times\hig$ corresponds to $(\hig'\cap(T^{*}P\backslash T^{*}S\times_{S}P))\times_{H}\hig$. The sheaf $Q$ on $\HS^{d}\times\hig=T^{*}P\times\hig$ obtained from Proposition~\ref{definition of Q} should be viewed as the analogue of the Laumon's sheaf in the Higgs setting. Since $Q$ is supported on $\textrm{Hilb}^{d}_{C|H}\times_{H}\hig$, via the isomorphisms above, $Q$ can also be viewed as a coherent sheaf on $(\hig'\cap(T^{*}P\backslash T^{*}S\times_{S}P))\times_{H}\hig$. In order the extend is to the entire $\hig'\times_{H}\hig$, it is natural to consider the full diagram:
$$\xymatrix{
 & T^{*}_{Z}(\check{P}\times P)\times\hig \ar[ld]_{\pi} \ar[rd]^{p} & \\
T^{*}\check{P}\times\hig & & T^{*}P\times\hig\\
}
$$
and ask whether $\pi_{*}(p^{*}(Q))$ is a Cohen-Macaulay sheaf supported on $\hig'\times_{H}\hig$. Also, to generalize this picture to Higgs bundles with values in other line bundles, we need to give a more intrinsic characterization of $T^{*}_{Z}(\check{P}\times P)$. It is not hard to show in this case that $T^{*}_{Z}(\check{P}\times P)$ can be identified with the blowup of $T^{*}\check{P}$ along the subscheme $T^{*}S\times_{S}\check{P}$. Hence all these motivates the following construction(See Construction~\ref{definition of Y} in subsection $3.3$)
\begin{construction}
Let $X$ be a smooth projective curve, $L$ a line bundle on $X$ with degree $l\geq 2g-1$, and $x_{0}$ a fixed point on $X$. Let $\mathcal{Y}$ be the stack classifying the data $(E,\varphi,s,\sigma)$ where $E$ is a rank two vector bundle on $X$ of degree $m$ with $H^{1}(E)=0$, $H^{0}(\check{E}\otimes L)=0$, and $E$ is globally generated, $s$ a nonzero global section of $E$ such that the quotient $M=E/O_{X}$ is a line bundle, and $\sigma$ is a trivialization of $M_{x_{0}}$, and $\varphi\in \textrm{Hom}(E,M\otimes L)$.
\end{construction}

We also need:

\begin{construction}
Let $\hig'^{m}$ be the moduli stack classifying the data $(E,\phi,s,\sigma)$ where $(E,s,\sigma)$ satisfying the same condition as in $\mathcal{Y}$, and $(E,\phi)$ is a Higgs bundle.
\end{construction}
In Lemma~\ref{elementary properties of Y} we will prove that $\hig^{'m}$ is a closed substack of $\mathcal{Y}$.
Now let us consider:
$$\xymatrix{
X\times\mathcal{Y} \ar[d]^{f}\\
\mathcal{Y}\\
}
$$
By the definition of $\mathcal{Y}$, we have the following morphism of vector bundles on $X\times\mathcal{Y}$:
$$E\xrightarrow{\varphi} M\otimes L$$
Hence this induced a section of $M\otimes L$ via:
$$O_{X}\xrightarrow{s} E\xrightarrow{\varphi} M\otimes L$$
So we get a global section of the vector bundle $f_{*}(M\otimes L)$ on $\mathcal{Y}$:
$$O\rightarrow f_{*}(M\otimes L)$$
by pushing it forward. Let $\mathcal{Z}$ be the vanishing locus of this section. $\mathcal{Z}$ is the analog of the closed subscheme $T^{*}S\times_{S}\check{P}\subseteq T^{*}\check{P}$. We will prove that $\mathcal{Z}$ is a smooth closed substack $\mathcal{Z}$ of $\mathcal{Y}$ and a complete intersection in $\mathcal{Y}$(See Lemma~\ref{definition of Z}). There exists a natural morphism $\mathcal{Y}\backslash\mathcal{Z}\rightarrow \HS^{d}$ defined by the following procedure:\\
We have a morphism $O_{\mathcal{Y}}\rightarrow f_{*}(M\otimes L)$ on $\mathcal{Y}$ which is nonvanishing over $\mathcal{Y}\backslash\mathcal{Z}$, hence we get a nonvanishing global section $t$ of $M\otimes L$ on $X\times(\mathcal{Y}\setminus\mathcal{Z})$. So if we denote the vanishing locus of $t$ by $D$, then $D$ is a closed substack of $X\times\mathcal{Y}$ and it is a family of finite subscheme of length $d$ of $X$ over $\mathcal{Y}\backslash\mathcal{Z}$. Notice that $t$ is given by the composition:
$$O_{X}\xrightarrow{s} E\rightarrow M\otimes L$$
If we restrict the above morphisms of vector bundles to $D$, we see that the composition:
$$O_{D}\xrightarrow{s_{D}} E\otimes O_{D}\rightarrow M\otimes L\otimes O_{D}$$
is equal to $0$ by the definition of $D$. Since we have the exact sequence of vector bundles:
$$0\rightarrow O_{X}\rightarrow E\rightarrow M\rightarrow 0$$
So we get:
$$0\rightarrow O_{D}\rightarrow E\otimes O_{D}\rightarrow M\otimes O_{D}\rightarrow 0$$
From this we see that the morphism:
$$E\otimes O_{D}\rightarrow M\otimes L\otimes O_{D}$$
factors through:
$$E\otimes O_{D}\rightarrow M\otimes O_{D}\rightarrow M\otimes L\otimes O_{D}$$
This gives a section $t_{D}\in H^{0}(L\otimes O_{D})$, and we embed $D$ into $S$ via $t_{D}$. So this defines a morphism $\mathcal{Y}\setminus\mathcal{Z}\rightarrow \HS^{d}$.\\
We want to resolve the rational map $\mathcal{Y}\backslash\mathcal{Z}\rightarrow \HS^{d}$. To do that we need the blowup of $\mathcal{Y}$ along $\mathcal{Z}$. Denote the blowup of $\mathcal{Y}$ along $\mathcal{Z}$ by $\mathcal{B}$. In Proposition~\ref{resolve the rational map} we will show that the morphism
$$\mathcal{Y}\backslash\mathcal{Z}\rightarrow \HS^{d}$$
extends to a morphism
$$\mathcal{B}\rightarrow \HS^{d}$$
So in the end we have the following diagram which will be called the fundamental diagram(The construction of the diagram is given in Proposition~\ref{resolve the rational map} of subsection $3.3$):
\begin{equation}\label{fundamental diagram}
\xymatrix{
 & \mathcal{B} \ar[r] \ar[d]^{\pi} & \HS^{d}\\
\mathcal{Z} \ar[r] & \mathcal{Y} & \\
}
\end{equation}
Let $\hig^{(n)}$ be the open substack of $\hig$ defined at the beginning of section $4$. The fundamental diagram above induces the following diagram:
$$\xymatrix{
\mathcal{B}\times\hig^{(n)} \ar[d]^{\pi} \ar[r]^{p} & \HS^{d}\times\hig^{(n)}\\
\mathcal{Y}\times\hig^{(n)}\\
}
$$
From the dimension calculations in Lemma~\ref{elementary properties of Y} and Lemma~\ref{definition of Z}, it follows easily that $\hig^{'m}\times_{H}\hig^{(n)}$ has codimension $d$ in $\mathcal{Y}\times\hig^{(n)}$ where $d=l+m$. The main theorem of the paper can be reformulated as follows:
\begin{theorem}\label{rewrite main theorem}
$\pi_{*}(p^{*}(Q))$(Here all functors are derived) is a Cohen-Macaulay sheaf of codimension $d$ supported on $\hig^{'m}\times_{H}\hig^{(n)}$.
\end{theorem}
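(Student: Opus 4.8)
The plan is to verify the three assertions of the theorem — that $R\pi_{*}(Lp^{*}Q)$ is concentrated in degree zero, that it is Cohen--Macaulay of codimension $d$, and that it is supported on $\hig'^{m}\times_{H}\hig^{(n)}$ — by isolating the content along the center $\mathcal{Z}$. Over $(\mathcal{Y}\setminus\mathcal{Z})\times\hig^{(n)}$ the blowup $\pi$ is an isomorphism, so $R\pi_{*}(Lp^{*}Q)$ is just the pullback of $Q$ under the honest morphism $\mathcal{Y}\setminus\mathcal{Z}\to\HS^{d}$ of subsection $1.4$. Under the projective--duality identification recalled there, this morphism carries $\hig'^{m}\cap(\mathcal{Y}\setminus\mathcal{Z})$ onto $\textrm{Hilb}^{d}_{\mathcal{C}|H}$, so the three assertions over the complement of $\mathcal{Z}$ follow from Proposition~\ref{definition of Q}. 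Thus the entire difficulty is concentrated over $\mathcal{Z}\times\hig^{(n)}$, which is exactly the locus of (possibly) nonreduced spectral curves that the construction in $[1]$ cannot reach.

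To treat a neighborhood of $\mathcal{Z}$, I would fix an explicit local model of the fundamental diagram. Since $\mathcal{Y}$ is smooth and $\mathcal{Z}\subset\mathcal{Y}$ is a smooth complete intersection of codimension $c=d+1-g$ (Lemma~\ref{elementary properties of Y}, Lemma~\ref{definition of Z}), the blowup $\mathcal{B}$ is smooth and its exceptional divisor is the projectivized normal bundle $\mathbf{P}(N_{\mathcal{Z}/\mathcal{Y}})$, a $\mathbf{P}^{c-1}$-bundle over $\mathcal{Z}$. I would describe $p$ along this exceptional divisor in terms of the section $O_{\mathcal{Y}}\to f_{*}(M\otimes L)$ cutting out $\mathcal{Z}$ and the universal subscheme $D\subset X$, and use Lemma~\ref{summand} to replace $Q$ by the more explicit sheaf $Q'$ of formula~\ref{Q'} together with the finite flat morphism $\psi$. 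Because $\HS^{d}$ and $\hig^{(n)}$ are smooth, $Q$ is perfect, so $Lp^{*}Q$ is computed by a finite locally free complex; the computation of $R\pi_{*}(Lp^{*}Q)$ then reduces to the cohomology of an explicit complex along the fibers of the exceptional $\mathbf{P}^{c-1}$-bundle.

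The two analytic cores are then (i) the vanishing $R^{>0}\pi_{*}(Lp^{*}Q)=0$, and (ii) the Cohen--Macaulay property. For (i) the essential input is Lemma~\ref{vanishing of pushforward}, applied fiberwise over $\mathcal{Z}$; this makes $R\pi_{*}(Lp^{*}Q)$ a genuine sheaf whose support is $\pi(p^{-1}(\textrm{Supp}\,Q))$, and combining this with the support computation over $\mathcal{Y}\setminus\mathcal{Z}$, with the fact that $\hig'^{m}$ is a closed substack of codimension $d$, and with the purity of supports of Cohen--Macaulay sheaves (Proposition~\ref{codimension of supp of CM sheaves}) identifies the support with $\hig'^{m}\times_{H}\hig^{(n)}$. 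For (ii) I would apply Grothendieck duality for the proper morphism $\pi$. Since $Q$ is perfect and $\pi^{!}\omega_{\mathcal{Y}\times\hig^{(n)}}=\omega_{\mathcal{B}\times\hig^{(n)}}$,
\[
R\Hom\bigl(R\pi_{*}Lp^{*}Q,\,\omega_{\mathcal{Y}\times\hig^{(n)}}\bigr)\;\simeq\;R\pi_{*}\bigl(Lp^{*}(Q^{\vee})\otimes\omega_{\mathcal{B}\times\hig^{(n)}}\bigr),\qquad Q^{\vee}:=R\Hom(Q,\mathcal{O}).
\]
As $Q$ is maximal Cohen--Macaulay of codimension $d$ on the regular stack $\HS^{d}\times\hig^{(n)}$, its dual $Q^{\vee}$ is again, up to the shift $[-d]$, a maximal Cohen--Macaulay sheaf of codimension $d$ supported on $\textrm{Hilb}^{d}_{\mathcal{C}|H}\times_{H}\hig^{(n)}$. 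Applying Lemma~\ref{vanishing of pushforward} a second time, now to this dual sheaf twisted by $\omega_{\mathcal{B}\times\hig^{(n)}}$, shows that the right-hand side above is concentrated in cohomological degree $d$; by the local-duality criterion that a coherent sheaf is Cohen--Macaulay of codimension $d$ precisely when its $\mathcal{E}xt^{i}(-,\omega)$ vanish for $i\neq d$, this proves that $R\pi_{*}(Lp^{*}Q)$ is Cohen--Macaulay of codimension $d$.

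The main obstacle is the verification of the hypotheses of Lemma~\ref{vanishing of pushforward} in the explicit local model, precisely over the nonreduced spectral curves where the section $t$ defining $D$ degenerates and $p$ is most singular along the exceptional divisor. The delicate point is the second application of the vanishing lemma: the twist by $\omega_{\mathcal{B}\times\hig^{(n)}}$ restricts to $O(-(c-1))$ on each $\mathbf{P}^{c-1}$-fiber, so the required higher-cohomology vanishing for the twisted derived pullback of the Cohen--Macaulay dual is genuinely more subtle than for $Q$ itself, and it is here that the geometry of $\mathcal{Z}$ and the inequality $l\geq 2g-1$ enter decisively.
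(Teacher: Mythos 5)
Your overall architecture --- treat the open locus $(\mathcal{Y}\setminus\mathcal{Z})\times\hig^{(n)}$ by smoothness of $p$ there, then establish $R\pi_{*}(Lp^{*}Q)\in D^{\leq 0}$, dualize via Grothendieck duality and $\omega_{\mathcal{B}\mid\mathcal{Y}}\simeq O((d-g)E)$, and conclude Cohen--Macaulayness from a derived-duality criterion --- is the same as the paper's (Lemma~\ref{the main lemma} plus Lemma $7.7$ of $[1]$). But there is a genuine gap at the last step. The criterion you invoke (``a coherent sheaf is CM of codimension $d$ iff its $\mathcal{E}xt^{i}$ vanish for $i\neq d$'') applies to \emph{sheaves}, whereas $R\pi_{*}(Lp^{*}Q)$ is a priori only a complex in $D^{\leq 0}$; indeed $Lp^{*}Q$ genuinely has negative cohomology sheaves along the exceptional divisor, since $p$ is not transverse to $Q$ there. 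Upgrading the complex to a sheaf requires the third hypothesis of Lemma $7.7$ of $[1]$: every cohomology sheaf must have support of codimension $\geq d$. Your proposal supplies this only over $\mathcal{Y}\setminus\mathcal{Z}$; over $\mathcal{Z}$ you appeal to purity of supports of CM sheaves (Proposition~\ref{codimension of supp of CM sheaves}), which is circular --- CM-ness is exactly what is being proven. The paper closes this hole with a geometric argument (part $(2)$ of Lemma~\ref{further props of B}): at a point of $\pi(p^{-1}(\textrm{Hilb}^{d}_{\mathcal{C}|H}\times_{H}\hig))$ lying over $\mathcal{Z}$, the divisor $D$ is forced to lie on the copy of $X$ embedded in $S$ by the section $t$, and since $\textrm{length}(D)=d=m+l>2l$ this forces $X$ to be a \emph{component} of the spectral curve; this yields $\dim(\mathcal{W}\cap(\mathcal{Z}\times\hig))\leq m+3l+2g-1<m+5l+1=\dim(\hig'^{m}\times_{H}\hig)$, which is what makes the codimension hypothesis hold. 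Nothing in your proposal produces this bound.

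The second problem is the mechanism you propose for the vanishing itself. Lemma~\ref{vanishing of pushforward} is a Fourier--Mukai statement about $Rf_{*}$ along $X^{d}\to\Pd$: its hypothesis involves twisting by arbitrary degree-zero line bundles $L^{\boxtimes d}$, i.e.\ moving in the Jacobian direction. It cannot be ``applied fiberwise over $\mathcal{Z}$'': the fibers of $\pi$ over $\mathcal{Z}$ are projective spaces $\mathbf{P}^{d-g}$ on which the $\Pd$-direction collapses to a point, so the lemma says nothing there, and coherent cohomology on such a fiber can live in degrees up to $d-g$. The paper's argument is irreducibly global: it uses the smooth morphism $\mathcal{Y}\to\Pd$ and the regular embedding $\mathcal{B}\hookrightarrow\mathbf{P}=\mathcal{Y}\times_{\Pd}X^{(d)}$ (Lemma~\ref{factorization of tau}), a resolution of $w^{*}(Q')$ whose terms are pullbacks of explicit bundles $F^{-i}$ on $X^{d}\times\hig^{(n)}$ built from the spectral-curve resolution (Lemma~\ref{existence of resolutions}, Corollary~\ref{explicit description of resolutions}), and Lemma~\ref{pushforward of bl} to pass from $\mathcal{B}$ to the ambient projective bundle --- which is precisely why one must verify vanishing for \emph{all} twists $O(a)$, $a\geq 0$ (coming from the Koszul resolution of $O_{\mathcal{B}}$ in $\mathbf{P}$), not just the single twist by the dualizing sheaf that you single out. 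Only after this reduction does Lemma~\ref{vanishing of pushforward}, combined with the degree conditions defining $\hig^{(n)}$ and the formulas for $O(\Theta)$ and $O(1)$ in Lemma~\ref{properties of abel-jacobian map}, finish the job. Your duality step (ii) correctly mirrors part $(2)$ of Lemma~\ref{the main lemma}, but as written both of your ``analytic cores'' rest on an application of the vanishing lemma that does not connect to the geometry, and the support estimate over $\mathcal{Z}$ is missing entirely.
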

Even though this theorem looks weaker than Theorem~\ref{the main theorem}, in section $5$ we will show that we can deduce Theorem~\ref{the main theorem} from Theorem~\ref{rewrite main theorem}. The proof of this theorem will be given in section $5$. It is obtained by more detailed study of the properties of the morphism $p$ in subsection $3.3$ and a cohomological calculation in section $4$.\\
\\
In the rest of this subsection, let us discuss the relationship between this construction and our previous construction of the Poincar\'e sheaf on the stack of Higgs bundles for $\p$ in $[13]$. Let us recall the following construction:
\begin{construction}
Let $\hig'$ be the moduli stack classifying the data $(E,\phi,s)$ where $(E,\phi)$ is a rank two Higgs bundle on $\p$ with value in $O(n)$ such that the underlying vector bundle $E$ is isomorphic to $O\oplus O$, and $s$ is a nonzero global section of $E$. Here we assume $n\geq 2$.
\end{construction}
The main result of our previous work can be summarized in the following theorem:
\begin{theorem}
\begin{enumerate1}
\item There exists a smooth closed substack $\mathcal{Z}$ of $\hig'$ of codimension $n+1$ such that $\mathcal{Z}$ is a complete intersection in $\hig'$.
\item There exists a morphism $\hig'\backslash\mathcal{Z}\rightarrow \HS^{n}$. Moreover, if we denote the blowup of $\hig'$ along $\mathcal{Z}$ by $\hig''$, then the morphism $\hig'\backslash\mathcal{Z}\rightarrow \HS^{n}$ extends to:
$$\xymatrix{
\hig'' \ar[d]^{\pi} \ar[r]^{g} & \HS^{n}\\
\hig'\\
}
$$
\item Consider the diagram:
$$\xymatrix{
\hig''\times_{H}\hig^{(-n)} \ar[d]^{\pi} \ar[r]^{g} & \HS^{n}\times\hig^{(-n)}\\
\hig'\times_{H}\hig^{(-n)}\\
}
$$
where $\hig^{(-n)}$ denotes the open substack of $hig^{-n}$ such that the underlying vector bundle is isomorphic to $O(-n)\oplus O(-n)$. Then $\pi_{*}(g^{*}(Q))$ is a maximal Cohen-Macaulay sheaf on $\hig'\times_{H}\hig^{(-n)}$
\end{enumerate1}
\end{theorem}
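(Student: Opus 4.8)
The plan is to treat the three parts in order, exploiting that on $\p$ everything becomes completely explicit after rigidifying $\hig'$.

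For part (1), I would first present $\hig'$ as a quotient stack. Since $E\cong O\oplus O$ we have $\mathrm{Aut}(E)=GL_{2}$, the Higgs field is a matrix $\phi=\begin{pmatrix} p & q \\ r & u\end{pmatrix}$ with entries in $H^{0}(O(n))$, and a nonzero global section is a constant vector in $k^{2}$. Using $GL_{2}$ to normalize $s=(1,0)^{t}$, the residual group is the stabilizer $B$ of matrices of the form $\begin{pmatrix}1 & \ast \\ 0 & \ast\end{pmatrix}$, so $\hig'\cong[\mathrm{Mat}_{2}(H^{0}(O(n)))/B]$. The quotient $M=E/O\cdot s\cong O$, and the composite $O\xrightarrow{s}E\xrightarrow{\phi}E\otimes O(n)\to M\otimes O(n)$ is exactly the entry $r\in H^{0}(O(n))$. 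Thus the section of the rank-$(n+1)$ bundle $H^{0}(O(n))\otimes O_{\hig'}$ that cuts out $\mathcal{Z}$ is $r$, and a direct conjugation computation shows $r\mapsto\delta r$ under $B$, so $\{r=0\}$ is a $B$-invariant linear subspace of codimension $\dim H^{0}(O(n))=n+1$. Hence $\mathcal{Z}=[\{r=0\}/B]$ is smooth and a complete intersection of codimension $n+1$, which is (1).

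For the rational map in (2), over $\hig'\setminus\mathcal{Z}$ the section $r$ is nonvanishing, so its zero divisor $D\subset\p$ is a flat family of length-$n$ subschemes; restricting the Higgs field to $D$ and using $0\to O_{D}\to E\otimes O_{D}\to M\otimes O_{D}\to 0$ (exactly as in the construction of $\mathcal{Y}\setminus\mathcal{Z}\rightarrow\HS^{d}$) produces a section $t_{D}\in H^{0}(O(n)\otimes O_{D})$ and hence an embedding of $D$ into the spectral surface $S$, giving $\hig'\setminus\mathcal{Z}\rightarrow\HS^{n}$. To extend this across $\mathcal{Z}$ I would invoke that $\mathcal{Z}$ is a complete intersection: its ideal is generated by the $n+1$ coefficients of $r$, so on the blowup $\hig''$ this ideal becomes invertible and $r$ factors as a nowhere-vanishing section times the exceptional divisor; dividing out the common factor yields a length-$n$ subscheme defined over all of $\hig''$, and the same restriction-of-$\phi$ recipe embeds it into $S$. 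By flatness and the universal property of $\HS^{n}$ this defines $g\colon\hig''\rightarrow\HS^{n}$ extending the rational map, establishing (2).

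The heart of the argument is part (3). Here $g^{*}(Q)$ denotes the pullback of the sheaf $Q$ of Proposition~\ref{definition of Q} along $g\times\mathrm{id}$ to $\hig''\times_{H}\hig^{(-n)}$, and $\pi$ is the blowup map. Over the open locus $\hig'\setminus\mathcal{Z}$ the map $\pi$ is an isomorphism and $g$ is the morphism of the previous paragraph, so there $\pi_{*}(g^{*}Q)$ restricts to the maximal Cohen–Macaulay sheaf already produced by Proposition~\ref{definition of Q}, and in particular agrees with the Poincar\'e line bundle on the regular locus. The whole content is therefore concentrated along the exceptional divisor $\pi^{-1}(\mathcal{Z})$, which is precisely the nonreduced, not-generically-regular locus invisible to the construction in $[1]$. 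The two statements I must prove are the higher-direct-image vanishing $R^{>0}\pi_{*}(g^{*}Q)=0$ and the maximal Cohen–Macaulayness of $\pi_{*}(g^{*}Q)$, and I expect this to be the main obstacle. My plan is to compute $R\pi_{*}$ fibrewise over $\hig^{(-n)}$ using the explicit blowup geometry on $\p$: since $g$ maps onto the Hilbert scheme of the (possibly nonreduced) spectral curve and $Q$ is Cohen–Macaulay of codimension $n$ there, I would bound the depth of $g^{*}Q$ along the exceptional fibres and deduce the vanishing from a cohomological computation on the fibres of $\pi$ together with local duality. This is exactly the $\p$-analogue of Lemma~\ref{vanishing of pushforward} and Theorem~\ref{rewrite main theorem}; once the vanishing and the depth estimate are in place, purity of the support (Proposition~\ref{codimension of supp of CM sheaves}) and uniqueness of Cohen–Macaulay extensions (Proposition~\ref{uniqueness of CM sheaf}) identify $\pi_{*}(g^{*}Q)$ as the desired maximal Cohen–Macaulay sheaf on $\hig'\times_{H}\hig^{(-n)}$.
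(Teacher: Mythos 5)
Your parts (1) and (2) are essentially correct. The explicit presentation $\hig'\simeq[\mathrm{Mat}_{2}(H^{0}(O(n)))/B]$, with $\mathcal{Z}$ cut out by the matrix entry $r\in H^{0}(O(n))$, is a clean direct verification of (1); it is exactly the concrete form of the remark in subsection $1.4$ that for $\p$ the substack $\mathcal{Z}'\subseteq\mathcal{Y}$ meets $\hig'$ transversally, so that $\mathcal{Z}=\mathcal{Z}'\cap\hig'$ remains a complete intersection. Your part (2) follows the same recipe as Proposition~\ref{resolve the rational map} together with Corollary~\ref{bl resolve nonflatness}. Bear in mind that the present paper does not reprove this theorem (it quotes it from $[13]$), so the relevant comparison is with the parallel general-genus machinery of sections $3$--$5$, which is what $[13]$ carries out for $\p$.

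Part (3), however, has a genuine gap: what you offer is a plan, and the plan as written would not close. The clearest symptom is that your argument never uses the defining hypothesis of $\hig^{(-n)}$, namely that the underlying bundle is isomorphic to $O(-n)\oplus O(-n)$. This hypothesis is not decorative: it is the $\p$-analogue of Construction~\ref{certain open substack}, and it is precisely what produces the cohomological vanishing in Step $4$ of the proof of Lemma~\ref{reduction of the main lemma} (via Lemma~\ref{vanishing of pushforward}, which on $\p$ degenerates to vanishing of higher cohomology of explicit twists on powers of $\p$). The conclusion of (3) is not available on all of $\hig^{-n}$ --- that is the whole reason the statement restricts to this open substack --- so no proof that ignores the hypothesis can be complete. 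A second, structural problem is your route to Cohen--Macaulayness: Proposition~\ref{codimension of supp of CM sheaves} and Proposition~\ref{uniqueness of CM sheaf} are statements \emph{about} sheaves already known to be Cohen--Macaulay, so invoking them to establish that property is circular; moreover ``bounding the depth of $g^{*}Q$'' is not well posed, since $g^{*}Q$ is a derived pullback and a priori a complex away from the locus where $g$ is transversal to the support of $Q$.

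What is actually needed is the criterion of Lemma $7.7$ of $[1]$, as used in section $5$: one must prove both $R\pi_{*}(g^{*}Q)\in D^{\leq 0}_{coh}$ and $\RH(R\pi_{*}(g^{*}Q),O)\in D^{\leq 0}_{coh}$ (together with control of the support), and only then conclude maximal Cohen--Macaulayness. Establishing these two bounds is where all the work lies, and it is not a fibrewise depth estimate: one resolves $Q'$ by vector bundles pulled back from the Cartesian power of the curve (Lemma~\ref{summand}, Lemma~\ref{resolution}, Lemma~\ref{existence of resolutions}), embeds the blowup $\hig''$ as a regular embedding in a projective bundle over $\hig'$ (Proposition~\ref{blowup as regular embedding}, the analogue of part (1) of Lemma~\ref{factorization of tau}), and then computes the pushforward through that bundle using Lemma~\ref{pushforward of bl}, Grothendieck duality and Proposition~\ref{dualizing sheaf of bl}; it is at this point that the condition $E'\simeq O(-n)\oplus O(-n)$ yields the required vanishing, for the complex and for its dual. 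Finally, identifying the result with the Poincar\'e sheaf on the locus where it is already known requires the compatibility statement $g^{-1}(\textrm{Hilb}^{n}_{\mathcal{C}|H})\cap(\hig'\backslash\mathcal{Z})=$ (the expected locus), i.e.\ the analogue of part (1) of Lemma~\ref{further props of B} together with the smoothness statement in part (3) of Proposition~\ref{resolve the rational map}; this, too, is absent from your outline.
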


Now let us describe the relationship between this paper and our previous work on the Poincar\'e sheaf in the case of Higgs bundles of $\p$ in $[13]$. The construction in the previous work is similar to the construction in this paper, except that in the case of $\p$ we take the blowup of $\hig'$, while in this paper we take the blowup of $\mathcal{Y}$ instead of $\hig^{'m}$. It turns out that we can still consider the stack $\mathcal{Y}$ for $\p$. And  we still have the closed substack $\mathcal{Z}'$ of $\mathcal{Y}$ and $\hig'$ will be a closed substack of $\mathcal{Y}$. What makes $\p$ special is that the closed substack $\mathcal{Z}'$ and $\hig'$ inside $\mathcal{Y}$ are transversal to each other, hence their intersection
$$\mathcal{Z}=\mathcal{Z}'\cap\hig'$$
is still a complete intersection in $\hig'$. So the restriction of the blowup $\mathcal{B}$ to $\hig'$ is isomorphism to the blowup of $\hig'$ along $\mathcal{Z}$. This explains that in the case of $\p$ is suffices to consider the stack $\hig'$ and its blowup $\hig''$. In the case of higher genus curves this is no longer true, we need to work with $\mathcal{Y}$ in order to get reasonable objects.

\subsection{Acknowledgments}
I'm very grateful to Professor Dima Arinkin for introducing me to this fascinating subject as well as his encouragement and support throughout the entire project. This work is supported by NSF grant DMS-1603277.

\section{Preliminaries}
In this section we gather some preliminaries that will be needed in the later parts of the paper. Most of the materials in subsection $2.1$ through $2.3$ have been discussed in section $2$ of our previous work $[13]$, hence we will list the statements without proof.

\subsection{Blowup and cohomology}
Let $X$ be a scheme, $\mathcal{E}$ a vector bundle of rank $n+1$ on $X$ with a global section $s\in H^{0}(X,\mathcal{E})$, such that the vanishing locus of $s$ is a regular embedding of codimension $n+1$. Denote the vanishing locus of $s$ by $Z$. Then we have the following description about the blowup of $X$ along $Z$ (See Chapter $11$ of $[10]$):
\begin{Proposition}\label{blowup as regular embedding}

The blowup of $X$ along $Z$ is a regular embedding of codimension $n$ in $\textbf{P}(\mathcal{E})$:

$$
\xymatrix{
  Bl_{Z}X \ar[rr]^{i} \ar[dr]^{p}
                &  &    \textbf{P}(\mathcal{E})=Proj(Sym_{X}\check{\mathcal{E}}) \ar[dl]^{\pi}    \\
                & X
}
$$
described by the following: On $\mathbf{P}(\mathcal{E})$ we have a natural morphism of vector bundles:
$$\pi^{*}\mathcal{E}\xrightarrow{\varphi} T_{P(\mathcal{E})\mid X}\otimes O(-1)$$
The blowup is the vanishing locus of $\varphi(\pi^{*}s)\in H^{0}(T_{P(\mathcal{E})\mid X})$. It is a regular embedding in $P(\mathcal{E})$ so we have a Koszul resolution:
\begin{equation}\label{bl}
\bigwedge^{*}(\Omega_{P(\mathcal{E})\mid X}\otimes O(1))\rightarrow O_{Bl_{Z}X}
\end{equation}
In particular, $p$ is a local complete intersection morphism
\end{Proposition}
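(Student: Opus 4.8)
The plan is to realize the stated zero locus as the blowup by combining the relative Euler sequence on $\mathbf{P}(\mathcal{E})$ with an explicit computation in charts, and then to read off the Koszul resolution from regularity of the section.

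First I would make the morphism $\varphi$ explicit. On $\mathbf{P}(\mathcal{E})=Proj(Sym_{X}\check{\mathcal{E}})$ the relative Euler sequence reads
$$0\rightarrow O(-1)\rightarrow \pi^{*}\mathcal{E}\rightarrow Q\rightarrow 0,\qquad Q\simeq T_{\mathbf{P}(\mathcal{E})\mid X}\otimes O(-1),$$
and $\varphi$ is nothing but the quotient map $\pi^{*}\mathcal{E}\twoheadrightarrow Q$. Thus $\varphi(\pi^{*}s)$ vanishes at a point $(x,\ell)$, where $\ell\subseteq \mathcal{E}_{x}$ is the tautological line, precisely when $s(x)\in\ell$. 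Over $X\setminus Z$ this forces $\ell=\langle s(x)\rangle$, so the zero locus is the graph of $x\mapsto \langle s(x)\rangle$; over $Z$, where $s$ vanishes, the whole fibre $\mathbf{P}(\mathcal{E}_{x})$ lies in it. Set-theoretically this is exactly $Bl_{Z}X$ together with its exceptional divisor $\mathbf{P}(N_{Z/X})=\mathbf{P}(\mathcal{E}|_{Z})$.

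Next, since the Euler sequence, the section $\varphi(\pi^{*}s)$, and the blowup are all local on $X$ and compatible with base change, I would reduce to $X=\mathrm{Spec}\,A$ with $\mathcal{E}$ trivial with basis $e_{0},\dots,e_{n}$ and $s=(s_{0},\dots,s_{n})$ a regular sequence generating $I_{Z}$. On the standard chart $U_{i}=\{t_{i}\neq 0\}$ of $\mathbf{P}(\mathcal{E})=\mathbb{P}^{n}_{X}$ with coordinates $y_{j}=t_{j}/t_{i}$, the tautological line is spanned by $e_{i}+\sum_{j\neq i}y_{j}e_{j}$, and a direct computation gives that the components of $\varphi(\pi^{*}s)$ in the quotient are $s_{j}-y_{j}s_{i}$ for $j\neq i$. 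These are exactly the relations defining the affine chart $A[s_{j}/s_{i}:j]$ of $Bl_{Z}X=Proj(\bigoplus_{k}I_{Z}^{k})$, so the zero scheme of $\varphi(\pi^{*}s)$ coincides with $Bl_{Z}X$ scheme-theoretically and is cut out by the $n$ equations $s_{j}-y_{j}s_{i}$.

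The remaining point --- and the one I expect to be the main obstacle --- is to check that these $n$ equations form a regular sequence, i.e. that $\varphi(\pi^{*}s)$ is a regular section of the rank-$n$ bundle $T_{\mathbf{P}(\mathcal{E})\mid X}\otimes O(-1)$. This is where the hypothesis that $Z\hookrightarrow X$ is a regular embedding (so that $s_{0},\dots,s_{n}$ is a genuine regular sequence) is essential; a short induction shows $s_{j}-y_{j}s_{i}$ is a non-zero-divisor and that the successive quotients stay regular. The subtlety to watch is that the zero locus has codimension $n+1$, rather than $n$, along the fibre over $Z$ itself; one must recognise this locus as the exceptional divisor sitting inside the pure-codimension-$n$ blowup rather than as a spurious extra component, which is exactly what the regular-sequence computation guarantees. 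Once regularity is established, the Koszul complex on the section,
$$\bigwedge^{*}\big(T_{\mathbf{P}(\mathcal{E})\mid X}\otimes O(-1)\big)^{\vee}=\bigwedge^{*}\big(\Omega_{\mathbf{P}(\mathcal{E})\mid X}\otimes O(1)\big)\longrightarrow O_{Bl_{Z}X}$$
is a resolution, and since $Bl_{Z}X\hookrightarrow\mathbf{P}(\mathcal{E})$ is a regular embedding followed by the smooth projection $\mathbf{P}(\mathcal{E})\to X$, the map $p$ is a local complete intersection morphism.
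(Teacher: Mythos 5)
The paper never actually proves this proposition: it is stated in the preliminaries, which the author explicitly lists without proof (referring to section 2 of the previous work $[13]$ and to Chapter 11 of $[10]$). So there is no argument in the paper to compare yours against; your proposal has to stand on its own, and it essentially does. Your route --- identify $\varphi$ with the quotient map in the relative Euler sequence $0\rightarrow O(-1)\rightarrow\pi^{*}\mathcal{E}\rightarrow T_{\mathbf{P}(\mathcal{E})\mid X}\otimes O(-1)\rightarrow 0$, observe that the zero locus of $\varphi(\pi^{*}s)$ is the locus $\{s(x)\in\ell\}$, and then verify on standard charts that its ideal is $(s_{j}-y_{j}s_{i})_{j\neq i}$ and that these elements form a regular sequence --- is the standard proof of this statement, and all the individual computations you record (the components of $\varphi(\pi^{*}s)$ in the chart $U_{i}$, the identification of the dual bundle giving the Koszul resolution, the lci conclusion) are correct.

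One step deserves to be stated more carefully, since as written you split into two claims what is really a single classical fact and you only sketch its proof. The assertion that $(s_{j}-y_{j}s_{i})$ is \emph{exactly} the ideal of the blowup chart $A[s_{j}/s_{i}]$ is not a "direct computation": it is the statement that a regular sequence is of linear type (the Rees algebra of $I_{Z}$ coincides with the symmetric algebra, presented by the Koszul syzygies $s_{i}T_{j}-s_{j}T_{i}$). The induction you allude to does prove it, but to make it close you must carry along a stronger inductive statement: not only that $s_{j}-y_{j}s_{i}$ stays a non-zero-divisor at each stage, but also that $s_{i}$ itself remains a non-zero-divisor modulo the ideal generated so far. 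The second part is what rules out extra components or embedded components supported over $Z$ and identifies $A[y]/(s_{j}-y_{j}s_{i})$ with its image $A[s_{j}/s_{i}]\subseteq A_{s_{i}}$; without it, regularity of the sequence alone does not give the scheme-theoretic equality with $Bl_{Z}X$. (Both parts follow from leading-coefficient arguments in the $y$-variables together with regularity of $(s_{0},\dots,s_{n})$ in $A$.) You should also note the small standard fact implicitly used in your reduction: since $Z\hookrightarrow X$ is a regular embedding of codimension $n+1$ and $I_{Z}$ is generated by the $n+1$ components of $s$, those components do form a regular sequence Zariski-locally (this is where one uses Noetherian local hypotheses). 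With these points made explicit, your proof is complete and is, as far as one can tell, the same argument the cited references carry out.
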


\begin{corollary}\label{bl resolve nonflatness}
Let $Y\xrightarrow{f} X$ be a proper flat morphism of schemes with geometrically integral fibers. Let $L$ be a line bundle on $Y$ such that $f_{*}(L)$ is a vector bundle of rank $n+1$ and the formation of $f_{*}(L)$ commutes with arbitrary base change. Let $s$ is a global section of $L$ such that the vanishing locus of the induced section $t$ of $f_{*}(L)$ has codimension $n+1$ on $X$. Let $Z$ be the vanishing locus of $t$, Set $Y'=Y\times_{X}Bl_{Z}X$. Consider:
\[\begin{CD}
Y'@>{g'}>>Y\\
@V{f'}VV@V{f}VV\\
Bl_{Z}X@>{g}>>X\\
\end{CD}\]
Then We have:
\begin{enumerate1}
\item The vanishing locus of $s$ on $Y$ is a relative effective Cartier divisor over the open subset $X\backslash Z$.
\item The section $s$ extends to a morphism $O(E)\xrightarrow{s'} g^{'*}(L)$ on $Y'$ such that the vanishing locus of $s'$ is a relative effective Cartier divisor over $Bl_{Z}X$ which extends the relative effective Cartier divisor over $X\backslash Z$
\end{enumerate1}
\end{corollary}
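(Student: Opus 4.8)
The plan is to reduce both parts to the standard criterion that a section of a line bundle cuts out a relative effective Cartier divisor exactly when its restriction to every fibre is a non-zerodivisor; since the fibres of $f$ are geometrically integral, this simply means the restriction is nonzero on each fibre. Throughout write $\mathcal{E}=f_{*}(L)$, a vector bundle of rank $n+1$, let $t\in H^{0}(X,\mathcal{E})$ be the section adjoint to $s$, so that $Z=V(t)$, and let $\epsilon\colon f^{*}\mathcal{E}=f^{*}f_{*}L\to L$ be the counit; by adjunction $s=\epsilon\circ f^{*}t$. Because the formation of $f_{*}(L)$ commutes with base change, for $x\in X$ the fibre $\mathcal{E}\otimes k(x)$ is canonically $H^{0}(Y_{x},L_{x})$, the restriction of $\epsilon$ to $Y_{x}$ is the evaluation map $H^{0}(Y_{x},L_{x})\otimes O_{Y_{x}}\to L_{x}$, and under this identification $t(x)$ is exactly $s|_{Y_{x}}$.

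For part (1), over $X\setminus Z$ the section $t$ is nowhere vanishing, so $s|_{Y_{x}}=t(x)\neq 0$ for all $x\in X\setminus Z$. As $Y_{x}$ is integral, a nonzero section of $L_{x}$ is a non-zerodivisor, so by the criterion above $V(s)$ is flat over $X\setminus Z$ with each fibre an effective Cartier divisor; hence it is a relative effective Cartier divisor over $X\setminus Z$.

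For part (2), I would first use Proposition~\ref{blowup as regular embedding} to factor the pulled-back section through the exceptional divisor. In the embedding $Bl_{Z}X\hookrightarrow\mathbf{P}(\mathcal{E})$, the blowup is the vanishing locus of $\varphi(\pi^{*}t)$, and since via the Euler isomorphism $T_{\mathbf{P}(\mathcal{E})|X}\otimes O(-1)\cong\pi^{*}\mathcal{E}/O(-1)$ the map $\varphi$ is the tautological quotient, this vanishing says precisely that $\pi^{*}t$ lies in the tautological sub-line-bundle $O(-1)\hookrightarrow\pi^{*}\mathcal{E}$. Thus on $Bl_{Z}X$ the section $g^{*}t$ factors as $O\xrightarrow{\phi}O(-1)|_{Bl}\xrightarrow{\iota}g^{*}\mathcal{E}$, and a local computation identifies $O(-1)|_{Bl}$ with $O_{Bl}(E)$ and $\phi$ with the canonical section vanishing on the exceptional divisor $E$. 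Pulling back to $Y'=Y\times_{X}Bl_{Z}X$ and using $f\circ g'=g\circ f'$, I set $O(E):=f^{'*}O_{Bl}(E)$ and define $s'=(g^{'*}\epsilon)\circ f^{'*}\iota\colon O(E)\to g^{'*}L$, so that $g^{'*}s=s'\circ f^{'*}\phi$; that is, $s'$ is $g^{'*}s$ divided by the exceptional divisor. Over $Bl_{Z}X\setminus E\cong X\setminus Z$ the map $\phi$ is an isomorphism, so $V(s')$ agrees there with $V(s)$, providing the required extension.

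The heart of the matter, and the step I expect to be the main obstacle, is showing that $V(s')$ is a relative effective Cartier divisor over all of $Bl_{Z}X$, including over $E$, where the original $s$ vanishes identically on fibres. The fibres of $f'$ are again the integral varieties $Y_{x}$ with $x=g(w)$, so it suffices to show $s'|_{Y'_{w}}\neq 0$ for every $w\in Bl_{Z}X$. Here the key point is that $\iota$ is a genuine sub-line-bundle of $g^{*}\mathcal{E}$, hence fibrewise injective even along $E$: at $w$ it singles out a nonzero vector $v_{w}\in\mathcal{E}\otimes k(x)=H^{0}(Y_{x},L_{x})$. Restricting $s'$ to $Y'_{w}=Y_{x}$ then evaluates the nonzero global section $v_{w}$, which is nonzero because $Y_{x}$ is integral (off $E$ one recovers part (1), since there $v_{w}$ spans the line through $t(x)$). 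Thus $s'$ is fibrewise a non-zerodivisor over all of $Bl_{Z}X$, and $V(s')$ is the desired relative effective Cartier divisor extending that of part (1). The two inputs that make the crux work are exactly the base-change identification $\mathcal{E}\otimes k(x)\cong H^{0}(Y_{x},L_{x})$ and the integrality of the fibres; without dividing out by $E$ the section would vanish on every fibre over $Z$, and it is precisely this division that the tautological sub-line-bundle on the blowup supplies.
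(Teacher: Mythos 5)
Your proof is correct, and it follows what is clearly the intended route: the paper itself lists this corollary without proof (deferring to the previous work $[13]$), but its placement immediately after Proposition~\ref{blowup as regular embedding} signals exactly your argument — factor $g^{*}t$ through the tautological sub-line-bundle $O(-1)|_{Bl_{Z}X}\simeq O(E)$, divide $g^{'*}s$ by the canonical section of $O(E)$, and then apply the fibrewise non-zerodivisor criterion for relative effective Cartier divisors using cohomology-and-base-change together with geometric integrality of the fibers. You also correctly isolate the crux, namely that $\iota$ stays fibrewise injective along $E$, so over a point $w$ of the exceptional divisor the divided section restricts on $Y'_{w}$ to a nonzero global section $v_{w}\in H^{0}(Y_{x},L_{x})$ (nonzero as an element of $H^{0}$ by injectivity of $\iota$, and then a non-zerodivisor by integrality), which is precisely what makes the extension across $E$ work.
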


The following lemma will be useful when we compute cohomology of sheaves on blowups:
\begin{lemma}\label{pushforward of bl}
Keep the same assumption as above. Consider:
$$
\xymatrix{
  Bl_{Z}X \ar[rr]^{i} \ar[dr]^{p}
                &  &    \textbf{P}(\mathcal{E})=Proj(Sym_{X}\check{\mathcal{E}}) \ar[dl]^{\pi}    \\
                & X
}
$$
Suppose $K$ is an object in $D^{b}_{coh}(\mathbf{P}(\mathcal{E}))$ such that $R\pi_{*}(K\otimes O(a))\in D^{\leq 0}_{coh}(X)$ for $a\geq 0$, then we have $Rp_{*}Li^{*}K\in D^{\leq 0}_{coh}(X)$

\end{lemma}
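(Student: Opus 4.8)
The plan is to transport the computation to the ambient projective bundle $\mathbf{P}(\mathcal{E})$ and exploit the Koszul resolution of $O_{Bl_{Z}X}$ supplied by Proposition~\ref{blowup as regular embedding}. Since $p=\pi\circ i$ with $i$ a closed immersion, the projection formula gives
\[
Rp_{*}Li^{*}K\;\simeq\;R\pi_{*}Ri_{*}Li^{*}K\;\simeq\;R\pi_{*}\bigl(K\otimes^{L}i_{*}O_{Bl_{Z}X}\bigr).
\]
By Proposition~\ref{blowup as regular embedding} the sheaf $i_{*}O_{Bl_{Z}X}$ is resolved by the Koszul complex $\mathrm{Kos}^{\bullet}$ whose term in cohomological degree $-j$ is $A_{j}:=\bigwedge^{j}(\Omega_{\mathbf{P}(\mathcal{E})|X}\otimes O(1))$, $0\leq j\leq n$. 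As each $A_{j}$ is a vector bundle, the derived tensor product is the ordinary one, so it suffices to prove $R\pi_{*}(K\otimes \mathrm{Kos}^{\bullet})\in D^{\leq 0}_{coh}(X)$.

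The heart of the argument is the amplitude estimate $R\pi_{*}(K\otimes A_{j}\otimes O(a))\in D^{\leq j}_{coh}(X)$ for all $j\geq 0$ and all $a\geq 0$, which I would prove by induction on $j$. The relative Euler sequence on $\mathbf{P}(\mathcal{E})=\mathrm{Proj}(\mathrm{Sym}\,\check{\mathcal{E}})$, twisted by $O(1)$, reads $0\to A_{1}\to \pi^{*}\check{\mathcal{E}}\to O(1)\to 0$, exhibiting $A_{1}=\Omega_{\mathbf{P}(\mathcal{E})|X}\otimes O(1)$ as a subbundle of $\pi^{*}\check{\mathcal{E}}$ with line-bundle quotient $O(1)$. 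Taking $j$-th exterior powers produces, for each $j\geq 1$, a short exact sequence of vector bundles
\[
0\to A_{j}\to \pi^{*}\!\bigl(\textstyle\bigwedge^{j}\check{\mathcal{E}}\bigr)\to A_{j-1}\otimes O(1)\to 0.
\]
Tensoring with $K\otimes O(a)$ (which preserves exactness, the terms being bundles) and applying $R\pi_{*}$ yields a distinguished triangle whose second term is $R\pi_{*}(K\otimes O(a))\otimes\bigwedge^{j}\check{\mathcal{E}}$, lying in $D^{\leq 0}$ by the hypothesis (this is where $a\geq 0$ is used), and whose third term is $R\pi_{*}(K\otimes A_{j-1}\otimes O(a+1))$, lying in $D^{\leq j-1}$ by the inductive hypothesis applied with twist $a+1\geq 0$. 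The long exact sequence of cohomology sheaves then forces the remaining term $R\pi_{*}(K\otimes A_{j}\otimes O(a))$ into $D^{\leq j}$; the base case $j=0$ is exactly the hypothesis.

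Finally I would assemble the pieces using the brutal (stupid) filtration of $\mathrm{Kos}^{\bullet}$. Writing $C_{(j)}$ for the truncated complex $[A_{j}\to\cdots\to A_{0}]$ in degrees $-j,\dots,0$, this filtration produces distinguished triangles relating $R\pi_{*}(K\otimes C_{(j-1)})$, $R\pi_{*}(K\otimes C_{(j)})$ and $R\pi_{*}(K\otimes A_{j})[j]$. The crucial point is that the estimate of the previous paragraph (with $a=0$) places $R\pi_{*}(K\otimes A_{j})$ in $D^{\leq j}$, so that the shift by $[j]$ coming from the homological position of $A_{j}$ in the Koszul complex lands it back in $D^{\leq 0}$; an induction on $j$ (base case $C_{(0)}=O_{\mathbf{P}(\mathcal{E})}$, giving $R\pi_{*}K\in D^{\leq 0}$) then yields $R\pi_{*}(K\otimes C_{(j)})\in D^{\leq 0}$ for all $j$, and for $j=n$ this is the desired conclusion. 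The step I expect to be the main obstacle is precisely this amplitude bookkeeping: the individual summands $R\pi_{*}(K\otimes A_{j})$ genuinely acquire cohomology in positive degrees up to $j$, and the whole argument hinges on the observation that this positivity is exactly cancelled by the degree $-j$ at which $A_{j}$ sits in the resolution, so that no positive cohomology survives in the total pushforward.
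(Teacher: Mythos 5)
Your proof is correct, and it follows essentially the route the paper intends: the paper itself prints no proof for this lemma (it is quoted from the earlier work $[13]$), but the ingredients it sets up are exactly the ones you use — the projection formula together with the Koszul resolution of Proposition~\ref{blowup as regular embedding}, the Euler-sequence induction giving $R\pi_{*}\bigl(K\otimes\bigwedge^{j}(\Omega_{\mathbf{P}(\mathcal{E})\mid X}\otimes O(1))\otimes O(a)\bigr)\in D^{\leq j}_{coh}(X)$, and the brutal-filtration assembly, which is precisely the pattern the paper codifies separately as Lemma~\ref{spectral sequence}. All steps (the two-step exterior-power sequence for a line-bundle quotient, the twist bookkeeping $a\mapsto a+1$ in the induction, and the cancellation of the amplitude $j$ against the homological degree $-j$) check out, so there is nothing to correct.
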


The following property results from Grothendieck duality:
\begin{Proposition}\label{dualizing sheaf of bl}
Keep the same assumption as the beginning of this subsection. Let $E$ be the exceptional divisor in $\blA$. The dualizing sheaf $\omega_{\blA\mid A}\simeq O(nE)$.
\end{Proposition}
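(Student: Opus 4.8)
The plan is to compute $\omega_{\blA\mid A}$ by adjunction along the regular embedding $i\colon \blA\hookrightarrow \mathbf{P}(\mathcal{E})$ supplied by Proposition~\ref{blowup as regular embedding}, which exhibits $p$ as the composite of the smooth projection $\pi\colon \mathbf{P}(\mathcal{E})\to A$ with a codimension $n$ regular embedding. Since $p$ is then a local complete intersection morphism of relative dimension $0$, Grothendieck duality identifies its relative dualizing sheaf with the line bundle
$$\omega_{\blA\mid A}\simeq i^{*}\omega_{\mathbf{P}(\mathcal{E})\mid A}\otimes \det N_{\blA/\mathbf{P}(\mathcal{E})},$$
so the entire computation reduces to pinning down the two factors on the right.

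First I would identify the normal bundle. By Proposition~\ref{blowup as regular embedding} the blowup is the zero locus of the regular section $\varphi(\pi^{*}s)$ of the rank $n$ bundle $Q:=T_{\mathbf{P}(\mathcal{E})\mid A}\otimes O(-1)$, so its conormal bundle is $i^{*}\check{Q}$ and hence $N_{\blA/\mathbf{P}(\mathcal{E})}\simeq i^{*}Q$. Taking determinants and using $\mathrm{rk}\,Q=n$ gives
$$\det N_{\blA/\mathbf{P}(\mathcal{E})}\simeq i^{*}\bigl(\det T_{\mathbf{P}(\mathcal{E})\mid A}\otimes O(-n)\bigr)\simeq i^{*}\bigl(\omega_{\mathbf{P}(\mathcal{E})\mid A}^{-1}\otimes O(-n)\bigr),$$
where the last step uses $\det T_{\mathbf{P}(\mathcal{E})\mid A}=\omega_{\mathbf{P}(\mathcal{E})\mid A}^{-1}$. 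Substituting into the adjunction formula, the two copies of $\omega_{\mathbf{P}(\mathcal{E})\mid A}$ cancel and I am left with the clean identity $\omega_{\blA\mid A}\simeq i^{*}O(-n)$. I find this cancellation convenient because it avoids writing $\omega_{\mathbf{P}(\mathcal{E})\mid A}$ out explicitly via the relative Euler sequence.

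The remaining, genuinely geometric, step is to identify $i^{*}O(-1)$ with $O(E)$, where $E$ is the exceptional divisor; this is where the real content lies. The idea is that over $\blA$ the pullback $p^{*}s\colon O\to p^{*}\mathcal{E}$ factors through the tautological subbundle $i^{*}O(-1)\hookrightarrow p^{*}\mathcal{E}$ — this factorization is precisely the incidence condition defining the embedding $i$, and over $A\setminus Z$ it recovers the line spanned by $s$. The resulting map $\tilde s\colon O\to i^{*}O(-1)$ is a global section of $i^{*}O(-1)$ whose zero divisor is exactly $E$, since it vanishes where $s$ lands in the center $Z$, i.e. along $p^{-1}(Z)=E$. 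Hence $i^{*}O(-1)\simeq O(E)$ and therefore $i^{*}O(-n)\simeq O(nE)$, which combined with the previous paragraph yields $\omega_{\blA\mid A}\simeq O(nE)$.

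I expect the main obstacle to be keeping the projective-bundle conventions consistent. With the convention $\mathbf{P}(\mathcal{E})=\mathrm{Proj}(\mathrm{Sym}_{A}\check{\mathcal{E}})$ used here, $O(-1)$ is the tautological line subbundle of $\pi^{*}\mathcal{E}$ and $\varphi$ is the quotient $\pi^{*}\mathcal{E}\to Q$, and it is essential to track that $\tilde s$ vanishes on $E$ with the correct multiplicity and sign; an error here would flip $O(nE)$ to $O(-nE)$. A safe way to fix the sign is to test the formula on the local model $A=\mathbf{A}^{n+1}$, $\mathcal{E}=O^{n+1}$ with $s$ the Euler section, where $\blA$ is the classical blowup of the origin and the identity $i^{*}O(-1)\simeq O(E)$ can be verified directly on the incidence variety.
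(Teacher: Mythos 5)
Your proposal is correct and follows essentially the route the paper intends: the paper states this proposition as a consequence of Grothendieck duality applied to the structure of Proposition~\ref{blowup as regular embedding} (deferring details to $[13]$), and your argument is exactly that — adjunction for the lci morphism $p=\pi\circ i$, the identification $N_{\blA/\mathbf{P}(\mathcal{E})}\simeq i^{*}(T_{\mathbf{P}(\mathcal{E})\mid A}\otimes O(-1))$ from the regular section, and the standard identity $i^{*}O(-1)\simeq O(E)$, verified correctly (including multiplicity) on the local model.
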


\subsection{Cohen-Macaulay sheaves}
In this section we shall review some facts about Cohen-Macaulay sheaves that will be used freely in the paper. For simplicity we shall work with a Gorenstein schemes $X$ (since all schemes(stacks) appearing in the paper are Gorenstein), so that the dualizing complex of $X$ can be taken to be $O_{X}$.
Most of these properties can be found in $[18]$.

\begin{definition}
Let $M$ be a coherent sheaf on $X$ such that $\textrm{codim} (\textrm{Supp(M)})=d$. Then $M$ is called Cohen-Macaulay of codimension $d$ if $\RH_{O_{X}}(M,O_{X})$ sits in degree $d$. In particular, $M$ is a Cohen-Macaulay module. $M$ is called maximal Cohen-Macaulay if $d=0$
\end{definition}
Notice that if $d=0$ then both $M$ and $\mathcal{H}om_{O_{X}}(M,O_{X})$ are maximal Cohen-Macaulay, and the functor $\mathcal{H}om_{O_{X}}(-,O_{X})$ induces an anti-auto-equivalence in the category of maximal Cohen-Macaulay sheaves.

\begin{Proposition}\label{codimension of supp of CM sheaves}
If $M$ is a Cohen-Macaulay sheaf of codimension $d$ on a Gorenstein scheme $X$, then the support of $M$ is of pure dimension without embedded components, and $\textrm{codim}(\textrm{Supp}(M))=d$. Moreover, the functor $M\rightarrow \mathcal{E}xt^{d}_{O_{X}}(M,O_{X})$ induces an anti-auto-equivalence of the category of Cohen-Macaulay sheaves of codimension $d$.
\end{Proposition}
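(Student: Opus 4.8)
The plan is to derive both assertions from the duality theory attached to the dualizing complex $O_{X}$ of the Gorenstein scheme $X$. Write $\mathbb{D}(-)=\RH_{O_{X}}(-,O_{X})$ for the associated duality functor on $D^{b}_{coh}(X)$. Since $X$ is Gorenstein, so that $O_{X}$ is a dualizing complex, Grothendieck--Serre duality supplies a natural biduality isomorphism $\mathbb{D}\circ\mathbb{D}\simeq\mathrm{id}$, whose pointwise incarnation is local duality over the Gorenstein local rings $O_{X,x}$. The equality $\textrm{codim}(\textrm{Supp}(M))=d$ is already part of the definition, so the substantive content is: (a) purity of $\textrm{Supp}(M)$ together with the absence of embedded components, and (b) that $M\mapsto\mathcal{E}xt^{d}_{O_{X}}(M,O_{X})$ is an anti-auto-equivalence.

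For (b) I would argue straight from biduality. If $M$ is Cohen--Macaulay of codimension $d$ then by definition $\mathbb{D}(M)$ is concentrated in degree $d$, i.e. $\mathbb{D}(M)\simeq N[-d]$ with $N=\mathcal{E}xt^{d}_{O_{X}}(M,O_{X})$ a coherent sheaf. Applying $\mathbb{D}$ and using $\mathbb{D}\circ\mathbb{D}\simeq\mathrm{id}$ gives $M\simeq\mathbb{D}(N[-d])\simeq\mathbb{D}(N)[d]$, so that $\mathbb{D}(N)\simeq M[-d]$ is again concentrated in degree $d$. Hence $N$ is Cohen--Macaulay of codimension $d$: its support agrees with that of $M$, since each of $M,N$ is recovered as the top $\mathcal{E}xt$ of the other, and $\mathcal{E}xt^{d}_{O_{X}}(N,O_{X})\simeq M$ naturally. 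Thus the contravariant functor $\mathcal{E}xt^{d}_{O_{X}}(-,O_{X})$ preserves the category of Cohen--Macaulay sheaves of codimension $d$ and is its own quasi-inverse, which is precisely the claimed anti-auto-equivalence.

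For (a) I would pass to stalks and invoke local duality. The hypothesis says $\mathcal{E}xt^{i}_{O_{X}}(M,O_{X})=0$ for $i\neq d$, hence $\mathrm{Ext}^{i}_{O_{X,x}}(M_{x},O_{X,x})=0$ for every $x$ and every $i\neq d$. Over the Gorenstein local ring $R=O_{X,x}$ of dimension $n_{x}$, local duality identifies $\mathrm{Ext}^{i}_{R}(M_{x},R)$ with the Matlis dual of $H^{n_{x}-i}_{\mathfrak{m}}(M_{x})$, so the nonvanishing range is exactly $n_{x}-\dim M_{x}\le i\le n_{x}-\textrm{depth}\,M_{x}$. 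For $x\in\textrm{Supp}(M)$ we have $M_{x}\neq 0$, and biduality forces $\mathrm{Ext}^{d}_{R}(M_{x},R)\neq 0$, since otherwise $\mathbb{D}(M_{x})=0$ and $M_{x}=0$; therefore $n_{x}-\dim M_{x}=d=n_{x}-\textrm{depth}\,M_{x}$, which says that $M_{x}$ is a Cohen--Macaulay module with $\dim M_{x}=n_{x}-d$. Now let $x$ be any associated point of $M$, so $\textrm{depth}\,M_{x}=0$; the displayed equality gives $n_{x}=d$, i.e. $x$ has codimension exactly $d$ in $X$. Consequently every associated point has codimension $d$: the minimal associated points are the generic points of the irreducible components of $\textrm{Supp}(M)$, so each component has codimension $d$ (purity), and since no associated point has codimension $>d$, none can be a proper specialization of a component's generic point, so $M$ has no embedded components.

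The bulk of this is routine homological bookkeeping, and the one genuinely load-bearing step is the local-to-global passage in (a): converting the sheaf-level vanishing of $\mathcal{E}xt^{i}_{O_{X}}(M,O_{X})$ into the pointwise vanishing and then reading off $\dim M_{x}$, $\textrm{depth}\,M_{x}$ and the codimension through the Gorenstein local duality identities $\max\{i:\mathrm{Ext}^{i}_{R}(M_{x},R)\neq 0\}=n_{x}-\textrm{depth}\,M_{x}$ and $\min\{i:\mathrm{Ext}^{i}_{R}(M_{x},R)\neq 0\}=n_{x}-\dim M_{x}$. All of these facts are standard for the Gorenstein schemes arising here (finite type over a field, hence admitting $O_{X}$ as dualizing complex with local duality valid at every local ring) and may be cited from $[18]$; the only point requiring attention is to confirm that $O_{X}$ genuinely plays the role of the dualizing complex, which holds in the geometric settings of the paper.
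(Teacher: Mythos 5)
The paper never proves this proposition: it sits in the review subsection on Cohen--Macaulay sheaves and is stated as a known fact, covered by the blanket citation ``Most of these properties can be found in $[18]$,'' so there is no in-paper argument to compare against. Your proof is correct and is precisely the standard argument that citation points to --- biduality for the dualizing complex $O_{X}$ of the Gorenstein scheme gives the anti-auto-equivalence (with $\mathcal{E}xt^{d}_{O_{X}}(-,O_{X})$ its own quasi-inverse), while the Gorenstein local-duality bounds, namely that $\mathrm{Ext}^{i}_{O_{X,x}}(M_{x},O_{X,x})$ is nonzero exactly at the endpoints $i=n_{x}-\dim M_{x}$ and $i=n_{x}-\mathrm{depth}\,M_{x}$ of its possible range (cf.\ $[18]$), convert the concentration of $\RH_{O_{X}}(M,O_{X})$ in the single degree $d$ into purity of the support and the absence of embedded associated points.
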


We will use  the following property of maximal Cohen-Macaulay sheaves ($[1]$ Lemma $2.2$):
\begin{Proposition}\label{uniqueness of CM sheaf}
Let $X$ be a Gorenstein scheme of pure dimension, $M$ a maximal Cohen-Macaulay sheaf and $Z\subseteq X$ a closed subscheme of codimension greater than or equals to $2$, then we have $M\simeq j_{*}(M|_{X-Z})$ for $j$: $X-Z\hookrightarrow X$. Hence for any two maximal Cohen-Macaulay sheaves $M$ and $N$, we have $Hom_{X}(M,N)\simeq Hom_{X\backslash Z}(M|_{X\backslash Z},N|_{X\backslash Z})$

\end{Proposition}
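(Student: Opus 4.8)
The plan is to reduce the first assertion to the vanishing of the local cohomology sheaves $\mathcal{H}^0_Z(M)$ and $\mathcal{H}^1_Z(M)$, and then to deduce the statement about homomorphisms by adjunction. Write $U=X\setminus Z$ and let $j\colon U\hookrightarrow X$ be the open immersion. The standard local cohomology triangle $R\Gamma_Z(M)\to M\to Rj_*(M|_U)$, together with the fact that $M$ is a sheaf concentrated in degree $0$, yields an exact sequence of sheaves
$$0\to \mathcal{H}^0_Z(M)\to M\to j_*(M|_U)\to \mathcal{H}^1_Z(M)\to 0.$$
Thus the isomorphism $M\simeq j_*(M|_U)$ is equivalent to the vanishing $\mathcal{H}^0_Z(M)=\mathcal{H}^1_Z(M)=0$, i.e.\ to the statement that $M$ has depth at least $2$ along $Z$.

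For the depth estimate I would argue pointwise. Since $X$ is Gorenstein, hence Cohen-Macaulay, and of pure dimension, and $M$ is maximal Cohen-Macaulay, Proposition~\ref{codimension of supp of CM sheaves} shows that $M$ has full support of pure dimension $\dim X$. Maximal Cohen-Macaulayness is preserved under localization over a Cohen-Macaulay ring, so for every point $x\in X$ the stalk $M_x$ is a maximal Cohen-Macaulay $O_{X,x}$-module of full support, whence $\operatorname{depth}_{O_{X,x}}(M_x)=\dim O_{X,x}$. Now for any $x\in Z$ the closure $\overline{\{x\}}$ is contained in $Z$, so $\dim O_{X,x}=\operatorname{codim}_X(\overline{\{x\}})\geq \operatorname{codim}_X(Z)\geq 2$. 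Using the identity $\operatorname{depth}_{I_Z}(M)=\inf_{x\in Z}\operatorname{depth}_{O_{X,x}}(M_x)$ and the fact that $\mathcal{H}^i_Z$ vanishes below the depth, I conclude that $\mathcal{H}^0_Z(M)=\mathcal{H}^1_Z(M)=0$ and therefore $M\simeq j_*(M|_U)$, proving the first assertion.

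For the second assertion, apply $j^*$ to obtain the restriction map $\operatorname{Hom}_X(M,N)\to \operatorname{Hom}_U(M|_U,N|_U)$, and observe that adjunction between $j^*$ and $j_*$ gives $\operatorname{Hom}_U(M|_U,N|_U)=\operatorname{Hom}_U(j^*M,j^*N)\simeq \operatorname{Hom}_X(M,j_*j^*N)=\operatorname{Hom}_X(M,j_*(N|_U))$. Since $N$ is also maximal Cohen-Macaulay, the first assertion gives $j_*(N|_U)\simeq N$, and hence $\operatorname{Hom}_X(M,N)\simeq \operatorname{Hom}_U(M|_U,N|_U)$, as desired.

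The main obstacle — or rather the only point requiring genuine care — is the depth estimate in the second paragraph: one must ensure both that $M$ has no associated points lying in $Z$ (ruling out $\mathcal{H}^0_Z$) and that sections extend across $Z$ (ruling out $\mathcal{H}^1_Z$), both of which rest on the Cohen-Macaulay property localizing correctly and on the codimension bookkeeping $\operatorname{codim}_X(\overline{\{x\}})\geq \operatorname{codim}_X(Z)$. Everything else in the argument is formal.
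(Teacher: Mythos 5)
Your proof is correct, but there is essentially nothing in the paper to compare it against: the paper states this proposition as a quoted result (Lemma $2.2$ of $[1]$) and, as announced at the beginning of Section $2$, lists it without proof. Your argument --- the local cohomology sequence $0\to \mathcal{H}^0_Z(M)\to M\to j_*(M|_{X\setminus Z})\to \mathcal{H}^1_Z(M)\to 0$, the depth estimate $\operatorname{depth}_{O_{X,x}}(M_x)\geq 2$ for all $x\in Z$, and adjunction for the statement about homomorphisms --- is the standard proof of the cited lemma (maximal Cohen-Macaulay implies Serre's condition $S_2$, which implies that sections extend across closed subsets of codimension at least $2$), so it is a legitimate substitute for the missing argument. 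One inaccuracy, which happens to be harmless: Proposition~\ref{codimension of supp of CM sheaves} does not give that $M$ has \emph{full} support, only that $\operatorname{Supp}(M)$ is pure of codimension $0$, i.e., a union of irreducible components of $X$; for instance, on $X=\operatorname{Spec} k[x,y]/(xy)$ the sheaf $M=O_X/(x)$ is maximal Cohen-Macaulay in the paper's sense but is supported on a single component. This does not damage your proof: at points $x\in Z$ with $M_x=0$ the local cohomology sheaves vanish trivially, while at points with $M_x\neq 0$ the defining condition of maximal Cohen-Macaulayness localizes to $\operatorname{Ext}^i_{O_{X,x}}(M_x,O_{X,x})=0$ for $i>0$, and over the Gorenstein local ring $O_{X,x}$ this already forces $\operatorname{depth}(M_x)=\dim O_{X,x}=\operatorname{codim}(\overline{\{x\}},X)\geq \operatorname{codim}(Z,X)\geq 2$, which is exactly what your depth estimate requires. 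The cleanest repair is therefore to drop the full-support claim and the appeal to ``localization of maximal Cohen-Macaulayness over a Cohen-Macaulay ring,'' and instead quote the Ext-vanishing characterization directly, since it localizes tautologically.
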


\begin{Proposition}\label{flatness of CM sheaves}
Let $X$ be a Gorenstein scheme and $Y$ be a smooth scheme. Let $X\xrightarrow{f} Y$ be a flat morphism. If $M$ is a maximal Cohen-Macaulay sheaf on $X$, then $M$ is flat over $Y$

\end{Proposition}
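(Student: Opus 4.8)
The plan is to reduce the statement to a computation with local rings and then combine the Cohen--Macaulay hypothesis with the local criterion of flatness. Since flatness over $Y$ can be checked stalkwise, I would fix a point $x\in X$ with image $y=f(x)$ and set $A=O_{Y,y}$, $B=O_{X,x}$, $N=M_{x}$; the goal becomes showing that the finitely generated $B$-module $N$ is flat over $A$ through the local homomorphism $A\to B$. If $x\notin\operatorname{Supp}(M)$ then $N=0$ and there is nothing to prove, so I assume $x\in\operatorname{Supp}(M)$. Because $Y$ is smooth, $A$ is a regular local ring; writing $r=\dim A$ and fixing a regular system of parameters $t_{1},\dots,t_{r}$, the Koszul complex $K_{\bullet}(t_{1},\dots,t_{r};A)$ is a finite free resolution of the residue field $k=A/\mathfrak{m}_{A}$.

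The heart of the argument is to show that the images of $t_{1},\dots,t_{r}$ in $B$ form an $N$-regular sequence. First, since $f$ is flat, $A\to B$ is a flat local homomorphism, so the dimension formula gives $\dim B=\dim A+\dim(B/\mathfrak{m}_{A}B)=r+s$, where $s=\dim(B/\mathfrak{m}_{A}B)$ is the dimension of the fibre ring. Next, because $M$ is maximal Cohen--Macaulay on the Gorenstein scheme $X$, the vanishing $\mathcal{E}xt^{>0}_{O_{X}}(M,O_{X})=0$ gives, by the standard Ext-vanishing characterization of maximal Cohen--Macaulay modules over the Gorenstein local ring $B$, that $\operatorname{depth}_{B}N=\dim B$; combined with $\operatorname{depth}_{B}N\le\dim_{B}N\le\dim B$ this forces $\dim_{B}N=\dim B=r+s$ and shows $N$ is Cohen--Macaulay. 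For such a module the sequence $t_{1},\dots,t_{r}$ is $N$-regular precisely when it is part of a system of parameters, i.e.\ when $\dim_{B}N/(t_{1},\dots,t_{r})N=\dim_{B}N-r=s$. I would verify this by a dimension count: the module $N/(t_{1},\dots,t_{r})N=N\otimes_{A}k$ is supported inside $\operatorname{Spec}(B/\mathfrak{m}_{A}B)$, so its dimension is at most $s$, while the general inequality for a quotient by $r$ elements gives dimension at least $\dim_{B}N-r=s$; hence equality holds and $t_{1},\dots,t_{r}$ is $N$-regular.

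With the regular sequence in hand the flatness follows quickly. Tensoring the Koszul resolution of $k$ with $N$ computes $\operatorname{Tor}^{A}_{i}(k,N)=H_{i}\big(K_{\bullet}(t_{1},\dots,t_{r};N)\big)$, and the Koszul homology of an $N$-regular sequence vanishes in positive degrees; in particular $\operatorname{Tor}^{A}_{1}(k,N)=0$. The local criterion of flatness, applied to the local homomorphism $A\to B$ and the finite $B$-module $N$, then yields that $N$ is flat over $A$, which completes the stalkwise verification.

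I expect the main obstacle to be the regular-sequence step, since this is where the Cohen--Macaulay hypothesis is genuinely used. It rests on correctly assembling three inputs: the depth equality $\operatorname{depth}_{B}N=\dim B$ coming from maximal Cohen--Macaulayness over a Gorenstein ring, the dimension formula $\dim B=\dim A+\dim(B/\mathfrak{m}_{A}B)$ for flat local homomorphisms, and the elementary support bound $\dim_{B}(N\otimes_{A}k)\le\dim(B/\mathfrak{m}_{A}B)$. Once these are in place, the equality of dimensions, and hence the regularity of $t_{1},\dots,t_{r}$ on $N$, is forced, and the remaining Koszul computation and appeal to the local flatness criterion are formal.
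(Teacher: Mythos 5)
Your proof is correct. There is, however, no argument in the paper to compare it against: this proposition is stated in the preliminaries (subsection 2.2) without proof, under the blanket remark that such facts can be found in $[18]$ (Bruns--Herzog). Your argument is a complete, self-contained proof, and it is essentially the module version of the classical ``miracle flatness'' criterion (Matsumura, Theorem 23.1; EGA IV, 6.1.5): regularity of $A=O_{Y,y}$ comes from smoothness of $Y$; the dimension formula $\dim B=\dim A+\dim(B/\mathfrak{m}_{A}B)$ comes from flatness of $f$ (the only place that hypothesis enters, and it is genuinely needed --- for a non-flat $f$, e.g.\ the inclusion of a Gorenstein closed subscheme, the conclusion fails); the equality $\mathrm{depth}_{B}N=\dim B$ comes from the Ext-vanishing characterization of maximal Cohen--Macaulay modules over a Gorenstein local ring (local duality, or Bruns--Herzog Theorem 3.3.10), which is exactly what the paper's definition of maximal Cohen--Macaulay gives after localizing, since stalks of $\mathcal{E}xt$ sheaves are Ext modules of stalks; and your dimension count correctly forces a regular system of parameters of $A$ to map to an $N$-regular sequence in $B$, so that the Koszul complex computes $\mathrm{Tor}_{1}^{A}(k,N)=0$ and the local criterion of flatness applies. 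All the facts you quote are standard and are assembled in the right order, with the Cohen--Macaulay hypothesis used precisely where you say it is. What your route buys over the paper's treatment is self-containedness; what the paper's citation buys is brevity, since the statement is a known result in the commutative algebra literature.
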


\subsection{Hilbert scheme of points}
In this section we review some facts about Hilbert scheme of points of curves and surfaces that will be used in the proof of the main theorem. First let us look at Hilbert schemes of $X$. It is well known that the symmetric power $X^{(d)}$ is the Hilbert scheme of $X$ parameterizing finite subschemes of length $d$, and $X^{d}$ classifies finite subscheme of length $d$ together with a flag:
$${\O}=D_{0}\subseteq D_{1}\subseteq\cdots\subseteq D_{d}=D$$
such that $\textrm{ker}(O_{D_{i}}\rightarrow O_{D_{i-1}})$ has length $1$. The natural morphism:
$$X^{d}\xrightarrow{\eta} X^{(d)} \qquad ({\O}=D_{0}\subseteq D_{1}\subseteq\cdots\subseteq D_{d}=D)\rightarrow D$$

Now let us review some facts about Hilbert scheme of surfaces and planar curve. The following theorem is well-known (see $[9]$):
\begin{theorem}
Let $S$ be a smooth surface, then $\HS^{n}$ is smooth of dimension $2n$
\end{theorem}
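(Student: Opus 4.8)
The plan is to compute the Zariski tangent space of $\HS^{n}$ at every closed point, show it has constant dimension $2n$, and then combine this with irreducibility of dimension $2n$ to force smoothness. First I would identify the tangent space: by the deformation theory of the Hilbert functor, at a point $[Z]$ corresponding to a length-$n$ subscheme $Z\subseteq S$ with ideal sheaf $\mathcal{I}_{Z}$ one has
\[
T_{[Z]}\HS^{n}\cong \operatorname{Hom}_{O_{S}}(\mathcal{I}_{Z},O_{Z}).
\]
Since $Z$ is zero-dimensional, the sheaf $\mathcal{H}om_{O_{S}}(\mathcal{I}_{Z},O_{Z})$ is supported on the finite set $\operatorname{Supp}(Z)$, so the global $\operatorname{Hom}$ is the direct sum of its stalks and the computation is purely local. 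Passing to the completed regular local ring $R=\widehat{O}_{S,p}\cong k[[x,y]]$ and writing $I\subseteq R$ for the local ideal with $\dim_{k}(R/I)=n_{p}$, the task reduces to proving $\dim_{k}\operatorname{Hom}_{R}(I,R/I)=2n_{p}$; summing over $p\in\operatorname{Supp}(Z)$ then gives $\dim_{k}T_{[Z]}\HS^{n}=2n$.

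The core local statement is the main obstacle, and I would handle it as follows. Applying $\operatorname{Hom}_{R}(-,R/I)$ to $0\to I\to R\to R/I\to 0$ and using that $\operatorname{Hom}_{R}(R/I,R/I)=R/I$ maps isomorphically onto $\operatorname{Hom}_{R}(R,R/I)=R/I$, the long exact sequence collapses to an isomorphism
\[
\operatorname{Hom}_{R}(I,R/I)\cong \operatorname{Ext}^{1}_{R}(R/I,R/I).
\]
Set $M=R/I$. Since $M$ has finite length over the two-dimensional regular ring $R$, the Auslander--Buchsbaum formula gives $\operatorname{pd}_{R}M=2$, so $\operatorname{Ext}^{i}_{R}(M,M)=0$ for $i>2$. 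Computing $\operatorname{Ext}$ from a finite free resolution $F_{\bullet}\to M$ shows that the Euler characteristic $\sum_{i}(-1)^{i}\dim_{k}\operatorname{Ext}^{i}_{R}(M,M)$ equals $\bigl(\sum_{i}(-1)^{i}\operatorname{rank} F_{i}\bigr)\cdot\dim_{k}M$, and the alternating sum of ranks is the generic rank of $M$, which vanishes because $M$ is torsion. Finally, local duality for the Gorenstein ring $R$ identifies $\operatorname{Ext}^{2}_{R}(M,M)$ with the Matlis dual of $\operatorname{Hom}_{R}(M,M)$, so $\dim_{k}\operatorname{Ext}^{2}_{R}(M,M)=\dim_{k}\operatorname{Hom}_{R}(M,M)=n_{p}$. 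Substituting into the vanishing Euler characteristic yields $n_{p}-\dim_{k}\operatorname{Ext}^{1}_{R}(M,M)+n_{p}=0$, hence $\dim_{k}\operatorname{Ext}^{1}_{R}(M,M)=2n_{p}$, as required.

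To conclude I would establish that $\HS^{n}$ is irreducible of dimension $2n$ and invoke the standard regularity criterion. The open locus $U\subseteq\HS^{n}$ of reduced subschemes is isomorphic to the configuration space of $n$ unordered distinct points of $S$, hence smooth of dimension $2n$; this already shows $\HS^{n}$ is smooth of dimension $2n$ along $U$. Via the Hilbert--Chow morphism $\HS^{n}\to \operatorname{Sym}^{n}S$, whose target is irreducible of dimension $2n$ and whose fibres are connected (irreducibility of the punctual Hilbert scheme, cf. Brian\c{c}on and $[9]$), one deduces that $\HS^{n}$ is irreducible with $U$ dense, so $\dim\HS^{n}=2n$. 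Since $\HS^{n}$ is irreducible of dimension $2n$, the local dimension at every closed point is $2n$, and the general inequality $\dim_{[Z]}\HS^{n}\le \dim_{k}T_{[Z]}\HS^{n}$ is therefore an equality everywhere, which is exactly regularity.

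I expect the second paragraph to be the crux. Note that the obstruction space $\operatorname{Ext}^{1}_{O_{S}}(\mathcal{I}_{Z},O_{Z})\cong\operatorname{Ext}^{2}_{O_{S}}(O_{Z},O_{Z})$ is nonzero in general (its local dimension is $n_{p}$ by the same duality), so smoothness genuinely cannot be obtained from unobstructedness of deformations; everything hinges on the exact tangent-space count of dimension $2n$ together with the global irreducibility input, and it is the precise local equality $\dim_{k}\operatorname{Hom}_{R}(I,R/I)=2\dim_{k}(R/I)$ that makes the argument work.
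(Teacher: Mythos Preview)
The paper does not prove this theorem; it records it as well known and cites Fogarty's paper $[9]$. Your argument is essentially Fogarty's original proof: compute the tangent space dimension at every closed point to be $2n$, exhibit a smooth point, and combine with connectedness. The local computation you give (the identification $\operatorname{Hom}_R(I,R/I)\cong\operatorname{Ext}^1_R(R/I,R/I)$, the vanishing Euler characteristic from a length-two free resolution, and the duality $\operatorname{Ext}^2_R(M,M)\cong\operatorname{Hom}_R(M,M)^\vee$ giving $\dim_k\operatorname{Ext}^2_R(M,M)=n_p$) is correct and is one of the standard routes to this step.

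There is, however, a genuine gap in your global step. From the Hilbert--Chow morphism having connected fibres over an irreducible base you can only conclude that $\HS^{n}$ is \emph{connected}; a proper surjection with connected (even irreducible) fibres over an irreducible base need not have irreducible source (take two components of different dimension meeting along one fibre). Fortunately connectedness is all that is needed. Since $\dim_k T_{[Z]}\HS^{n}=2n$ for every closed point, $[Z]$ is nonsingular iff $\dim_{[Z]}\HS^{n}=2n$, equivalently iff $\dim_{[Z]}\HS^{n}\geq 2n$; the latter locus is closed, being the union of the irreducible components of dimension at least $2n$. Thus the smooth locus is both open and closed, and it is nonempty (it contains the reduced subschemes), hence equals $\HS^{n}$ by connectedness. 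Irreducibility then follows a posteriori from smoothness plus connectedness. This open--closed argument is precisely how Fogarty concludes in $[9]$, so once you replace ``irreducible'' by ``connected'' in your final paragraph and insert this step, your proof is complete and in line with the reference the paper invokes.
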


We also set $\HS^{' d}$ be the open subscheme of $\HS^{d}$ parameterizing subschemes $D$ that can be embedded into a smooth curve (This notion is introduced in $[1]$). In the rest of the paper, $S$ is going to be the total space of a line bundle $L$ on $X$, and we shall work with a particular open subscheme of $\HS^{d}$ defined by the following proposition:
\begin{Proposition}\label{an open subscheme of HS}
View $X^{(d)}$ as the Hilbert scheme of $X$ parameterizing length $d$ subschemes. Let $D$ be the universal subscheme:
$$\xymatrix{
D \ar[rr] \ar[dr] & & X\times X^{(d)} \ar[ld]^{\pi}\\
 & X^{(d)}\\
}
$$
Then the total space of the vector bundle $\pi_{*}(L_{D})$ can be naturally identified with an open subscheme of $\HS^{d}$. Moreover, it is contained in the open subscheme $\HS^{' d}$.

\end{Proposition}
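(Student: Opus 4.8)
The plan is to build the morphism from the total space directly via the universal property of the Hilbert scheme, to identify its image with an explicit open locus, and finally to check that locus is contained in $\HS^{' d}$.

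First I would fix notation: write $\rho\colon S\to X$ for the bundle projection of the total space of $L$, and recall that a morphism $T\to S$ over $X$ is the same datum as a section over $T$ of the pullback of $L$. Set $V:=\mathbf{Tot}(\pi_{*}(L_{D}))$, the total space of the rank $d$ vector bundle $\pi_{*}(L_{D})$ on $X^{(d)}$; since $\dim X^{(d)}=d$ we have $\dim V=2d=\dim\HS^{d}$, which is a reassuring sanity check. By construction a $T$-point of $V$ is a length $d$ subscheme $D_{T}\subseteq X\times T$, flat over $T$, together with a section $t\in H^{0}(D_{T},L|_{D_{T}})$. Such a section is exactly a morphism $D_{T}\to S$ over $X$; combined with $D_{T}\to T$ it yields $D_{T}\to S\times T$. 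I would check this is a closed immersion: since $O_{X\times T}\twoheadrightarrow O_{D_{T}}$ and the relative coordinate of $S$ is sent to $t$, the structure map $O_{S\times T}\to O_{D_{T}}$ is surjective. Thus $D_{T}\hookrightarrow S\times T$ is a flat family of length $d$ subschemes of $S$, and the universal property of $\HS^{d}$ produces a morphism $\Phi\colon V\to\HS^{d}$.

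Next I would identify the image and prove $\Phi$ is an open immersion. Let $W\subseteq\HS^{d}$ be the locus of subschemes $Z$ for which the projection $\rho|_{Z}\colon Z\to X$ is a closed immersion, and let $\Xi\subseteq S\times\HS^{d}$ denote the universal subscheme. The set $W$ is open: $\Xi$ is finite flat over the base, and the condition that $O_{X\times\HS^{d}}\to(\rho\times\mathrm{id})_{*}O_{\Xi}$ be surjective is open by Nakayama. By construction $\Phi$ factors through $W$. Conversely, over $W$ the map $\rho\times\mathrm{id}$ identifies $\Xi|_{W}$ with a finite flat length $d$ family $\mathcal{D}\subseteq X\times W$, giving a morphism $W\to X^{(d)}$, and $\Xi|_{W}$ is the graph of a section of $L|_{\mathcal{D}}$, i.e.\ a morphism $W\to V$. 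I would then verify these two maps are mutually inverse, so that $\Phi$ is an isomorphism onto the open subscheme $W$; this proves the first assertion.

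Finally I would show $W\subseteq\HS^{' d}$. For $Z\in W$, the projection realizes $Z$ as the graph of a section $t_{D}$ over a length $d$ subscheme $D$ of the smooth curve $X$. In a local coordinate $s$ on $X$ with relative coordinate $u$ on $S$, $D$ is cut out by $s^{e}$ and $Z$ by $(s^{e},\,u-f(s))$ for a polynomial $f$, so $Z$ lies on the smooth germ $\{u=f(s)\}$; hence $Z$ is curvilinear. To produce a single smooth curve in $S$ containing all of $Z$ I would either extend $t_{D}$ to a global section of $L$, whose graph is a smooth curve isomorphic to $X$ (available whenever $H^{1}(X,L(-D))=0$), or, in general, invoke the standard fact that a curvilinear finite subscheme of a smooth quasi-projective surface lies on a smooth curve, via a Bertini argument in a sufficiently positive linear system on a projective compactification of $S$ separating the first-order data along $Z$. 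The main obstacle is exactly this last point, namely upgrading the evident local curvilinearity to membership in one global smooth curve; the identification $\Phi\colon V\xrightarrow{\sim}W$ and the openness of $W$ are then formal consequences of the universal property together with the Nakayama openness argument above.
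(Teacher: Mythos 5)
Your proof is correct and follows the same route as the paper's: realizing a point of the total space as the length-$d$ subscheme $D\subset X$ embedded into $S$ as the graph of the section $t_{D}$, which is exactly the paper's (far terser) argument. The additional material you supply --- the functor-of-points construction of $\Phi$, the identification of the image with the open locus where the projection back to $X$ is a closed immersion (with the Nakayama openness argument and the inverse map), and the curvilinearity/Bertini discussion for the containment in $\HS^{' d}$ --- consists precisely of the steps the paper dismisses as following ``immediately,'' so it is a fleshed-out version of the same proof rather than a different one.
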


\begin{proof}
Observe that if $D$ is length $d$ subscheme of $X$ and $s$ a global section of $L_{D}$, then we can embed $D$ into $S$ using the section $s$. This gives $D$ the structure of a closed subscheme of $S$. Now the assertion follows immediately from this.

\end{proof}

\begin{corollary}\label{Hilbert scheme of P1 and S}
Keep the same notation as the previous Proposition. Let $V$ denote the total space the vector bundle $\pi_{*}(L_{D})$ on $X^{(d)}$, and denote its pullback to $X^{d}$ by $V'$.
\begin{enumerate1}
\item there exist an open embedding $V\hookrightarrow \HS^{d}$ with image contained in $\HS^{' d}$.
\item There are Cartesian diagrams:
$$\xymatrix{
V' \ar[r] \ar[d]^{\psi} & \Fl^{' d} \ar[d]\\
V \ar[r] & \HS^{' d}
}
$$
$$\xymatrix{
V' \ar[r]^{r'} \ar[d]^{\psi} & X^{d} \ar[d]^{h}\\
V \ar[r]^{r} & X^{(d)}\\
}
$$
\item The composition of the morphisms:
$$V'\subseteq \Fl^{' d}\xrightarrow{\sigma} S^{d}\rightarrow X^{d}$$
is the natural projection $V'\xrightarrow{r'} X^{d}$
\item Let $D'$ be the universal subscheme of $S$ over $\HS^{' n}$ and $D$ be the universal subscheme of $X$ over $X^{(d)}$. Consider $O_{D'}$ and $O_{D}$ as vector bundles on $\HS^{' n}$ and $X^{(d)}$. Let $r$ be the natural projection $V\xrightarrow{r}X^{(d)}$. Then we have: $\de(O_{D'})\simeq r^{*}(\de(O_{D}))$ over $V$
\end{enumerate1}
\end{corollary}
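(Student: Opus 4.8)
The plan is to reduce all four assertions to a single geometric identification of the universal subscheme over $V$, after which each part becomes a matter of tracing through definitions. Part (1) requires nothing new: it is precisely the content of Proposition~\ref{an open subscheme of HS}, which realizes $V$ as the total space of $\pi_{*}(L_{D})$ and embeds it openly into $\HS^{d}$, with image in $\HS^{'d}$, by sending a pair $(D_{0},s)$ with $D_{0}\in X^{(d)}$ and $s\in H^{0}(D_{0},L|_{D_{0}})$ to the graph of $s$ inside $S$.

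The key step I would isolate first is the following. Let $D'_{V}\subseteq S\times V$ denote the restriction to $V$ of the universal subscheme $D'$ over $\HS^{'d}$, and let $D_{V}:=D\times_{X^{(d)}}V\subseteq X\times V$ be the pullback of the universal subscheme of $X$ along $r\colon V\to X^{(d)}$. I claim the projection $S\to X$ induces an isomorphism $D'_{V}\xrightarrow{\sim}D_{V}$ over $V$. Indeed, by the description recalled in part (1), over a point $(D_{0},s)$ the subscheme $D'_{V}$ is the image of the section $s\colon D_{0}\to S|_{D_{0}}$ of the total space of $L|_{D_{0}}$, and this image maps isomorphically onto $D_{0}$ under $S\to X$; relativizing over $V$ gives the asserted isomorphism. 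Everything else follows from this.

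With this in hand, the second diagram in part (2) is Cartesian by construction, since $V'$ is by definition the fiber product $V\times_{X^{(d)}}X^{d}$ with $\psi$ and $r'$ the two projections. For the first diagram, since $V\hookrightarrow\HS^{'d}$ is open, the fiber product $\Fl^{'d}\times_{\HS^{'d}}V$ is the restriction $\Fl^{'d}|_{V}$, which parameterizes complete flags of the universal subscheme $D'_{V}$; the isomorphism $D'_{V}\cong D_{V}$ translates these into complete flags of $D_{V}$, i.e. into relative length-one filtrations of the universal subscheme of $X$ pulled back to $V$, which are exactly the $V$-points of $V\times_{X^{(d)}}X^{d}=V'$ (alternatively one invokes Proposition~\ref{rewrite using flag}). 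Part (3) is then the compatibility of these identifications with the point-extracting maps: $\sigma\colon\Fl^{'d}\to S^{d}$ records the ordered successive quotients of a flag as points of $S$, and $S^{d}\to X^{d}$ projects each to $X$, so under $D'_{V}\cong D_{V}$ the resulting ordered points in $X$ are the flag of $D_{V}$ viewed as a point of $X^{d}$, which is the projection $r'$.

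Finally, for part (4) the isomorphism $D'_{V}\cong D_{V}$ gives $O_{D'_{V}}\cong O_{D_{V}}$, and since $D\to X^{(d)}$ is finite flat, flat base change identifies the pushforward of $O_{D'_{V}}$ to $V$ with $r^{*}(O_{D})$; taking determinants yields $\de(O_{D'})|_{V}\simeq r^{*}(\de(O_{D}))$. I do not expect a serious obstacle, as the argument is largely formal once the central isomorphism is established. The one point demanding genuine care is the Cartesian property of the first square in part (2): one must check that formation of complete flags commutes with $D'_{V}\cong D_{V}$ at the level of \emph{families} rather than merely on points, and that the flag Hilbert scheme of $S$ restricted to $V$ is actually \emph{represented} by $V'$; this is where Proposition~\ref{rewrite using flag} and the fact that subschemes in $\HS^{'d}$ embed into smooth curves enter, guaranteeing that the relevant Hilbert functors are represented by the expected smooth schemes.
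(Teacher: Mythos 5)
Your proposal is correct and follows essentially the same route as the paper: your central isomorphism $D'_{V}\simeq D_{V}$ induced by the projection $S\to X$ is exactly the paper's key observation that over $V$ the universal subscheme of $S$ ``comes from'' a subscheme of $X$, from which parts (2)--(4) follow just as you describe (flags in $S$ equal flags in $X$, $\textrm{Flag}^{d}_{X}\simeq X^{d}$, and base change plus determinants for part (4)). Your treatment is merely more explicit about relativizing over $V$ and about representability of the flag functor, which the paper leaves implicit.
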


\begin{proof}
The identification of $V$ as an open subscheme of $\HS^{' d}$ is defined as follows. A point of $V$ corresponds to pairs $(D,s)$ where $D$ is a closed subscheme of length $d$ of $X$ and $s\in H^{0}(L_{D})$. Since the surface $S$ is the total space of the line bundle $L$, we can embed $D$ into $S$ using the section $s$. So this defines a morphism $V\rightarrow \HS^{' d}$. Now let us consider:
\[\begin{CD}
X\times X^{(d)}\\
@V{p}VV\\
X^{(d)}\\
\end{CD}\]
In Proposition~\ref{an open subscheme of HS} we proved that $V$ can be identified with an open subscheme of $\HS^{d}$ contained in $\HS^{' d}$. This proves $(1)$\\
For part $(2)$, notice that over $V$, the subscheme $D$ of $S$ comes from a subscheme of $X$, hence giving a flag of $D$ as a subscheme of $S$ is the same thing as giving a flag of $D$ as a subscheme of $X$, and we know that $Flag^{d}_{X}\simeq X^{d}$. This also proves part $(3)$\\
Part $(4)$ follows almost immediately from the fact that over $V$, the subscheme $D'$ of $S$ comes from the subscheme $D$ of $X$. Hence the assertion follows.

\end{proof}

\subsection{Jacobian of smooth curves and Fourier-Mukai transform}
In this section we briefly review some well-known facts about the Jacobian of smooth curves. Let $X$ be a smooth projective curve of genus $g$ with a fixed point $x_{0}$. Then the Jacobian $\Pd$ of $X$ parameterizes degree $d$ line bundles on $X$ with a trivialization at $x_{0}$. It is well known that we have the Abel-Jacobian map:
$$X^{(d)}\xrightarrow{A} \Pd$$
which is smooth when $d\geq 2g$. The following summarizes the properties we need:
\begin{lemma}\label{properties of abel-jacobian map}
Consider the following diagram:
$$\xymatrix{
X\times X^{d} \ar[r]^{h'} \ar[d]^{q''} & X\times X^{(d)} \ar[r]^{A'} \ar[d]^{q'} & X\times\Pd \ar[d]^{q}\\
X^{d} \ar[r]^{h} \ar@/_2pc/[rr]^{f} & X^{(d)} \ar[r]^{A} & \Pd\\
}
$$
Let $\mathcal{L}$ be the universal line bundle on $X\times\Pd$, then we have:
\begin{enumerate1}
\item $A'^{*}(\mathcal{L})\simeq O_{X\times X^{(d)}}(D)\otimes q'^{*}(O(-x_{0})^{(d)})$ where $D$ is the universal divisor on $X\times X^{(d)}$
\item $X^{(d)}$ is a projective bundle over $\Pd$ associated with the vector bundle $q_{*}(\mathcal{L})$, and $O(1)_{X^{(d)}|\Pd}\simeq O(x_{0})^{(d)}$
\item Let $\Delta_{ij}$ be the divisor on $X^{d}$ given by $x_{i}=x_{j}$. Then $h^{*}(\de(q'_{*}(O_{D}))^{-1})\simeq O(\sum_{i<j}\Delta_{ij})$
\item Let $\Theta$ be the theta divisor on $\Pd$. We have:
$$f^{*}O(\Theta)\simeq \Omega_{X}((d-g+1)x_{0})^{\boxtimes d}\otimes O(-\sum_{i<j}\Delta_{ij})$$
\item The dualizing sheaf $\omega_{X^{d}|X^{(d)}}\simeq O(\sum_{i<j}\Delta_{ij})$.

\end{enumerate1}
\end{lemma}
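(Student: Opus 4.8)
The plan is to treat the five assertions in sequence, since (1) underpins both (2) and (4), while (3)--(5) are all diagonal computations best carried out after pulling everything back to $X^{d}$. The backbone is (1), which I would prove by the seesaw theorem. Restricting $A'^{*}(\mathcal{L})$ to a fibre $X\times\{[D_{0}]\}$ gives $\mathcal{L}|_{X\times\{[O(D_{0})]\}}\simeq O_{X}(D_{0})$, which matches the restriction of $O_{X\times X^{(d)}}(D)$; hence $A'^{*}(\mathcal{L})\otimes O(-D)$ is of the form $q'^{*}R$ for a line bundle $R$ on $X^{(d)}$. To pin down $R$ I restrict to the section $\{x_{0}\}\times X^{(d)}$: since $\mathcal{L}$ is rigidified along $\{x_{0}\}\times\Pd$ its pullback is trivial there, while the standard identity $O_{X\times X^{(d)}}(D)|_{\{x\}\times X^{(d)}}\simeq O(x)^{(d)}$ produces $O(x_{0})^{(d)}$, forcing $R\simeq O(-x_{0})^{(d)}$. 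The point to watch throughout is this $x_{0}$-normalization, as it is exactly what generates the twists in (1) and (4).

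For (2), since every $\mathcal{L}_{[L]}$ has degree $d\geq 2g-1$ we have $R^{1}q_{*}(\mathcal{L})=0$, so $q_{*}(\mathcal{L})$ is a vector bundle and $X^{(d)}\simeq\mathbf{P}(q_{*}\mathcal{L})$ by the universal property identifying fibres with the complete linear systems $|D|$; the identification $O(1)\simeq O(x_{0})^{(d)}$ then follows by comparing the tautological quotient with the section of $A'^{*}\mathcal{L}$ cutting out $D$, using (1). Assertion (5) is logically independent: $h\colon X^{d}\to X^{(d)}$ is the finite flat quotient by $S_{d}$ of two smooth varieties, ramified to order one along each $\Delta_{ij}$ (the local model $(x,y)\mapsto(x+y,xy)$ has different $(x-y)$), so its ramification divisor is $\sum_{i<j}\Delta_{ij}$ and $\omega_{X^{d}|X^{(d)}}\simeq O(\sum_{i<j}\Delta_{ij})$.

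For (3) I would use flat base change along $h$ (miracle flatness makes $h$ flat): since the universal divisor is finite over the base, $h^{*}q'_{*}(O_{D})\simeq q''_{*}(O_{\widetilde{D}})$, where $\widetilde{D}=\sum_{i}\Gamma_{i}$ and $\Gamma_{i}\subseteq X\times X^{d}$ is the graph of the $i$-th projection. Filtering $O_{\widetilde{D}}$ by the partial sums $\Gamma_{1}+\cdots+\Gamma_{k}$ gives short exact sequences with graded pieces $O_{\Gamma_{k}}(-\sum_{i<k}\Gamma_{i})$; each $\Gamma_{k}$ maps isomorphically to $X^{d}$, and $O(-\Gamma_{i})|_{\Gamma_{k}}\simeq O_{X^{d}}(-\Delta_{ik})$, so taking determinants and multiplying yields $\de(q''_{*}O_{\widetilde{D}})\simeq O(-\sum_{i<j}\Delta_{ij})$, which is the claim after inverting.

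Assertion (4) is the main computation and the step I expect to be the real obstacle. I take the theta bundle in its determinant-of-cohomology normalization $O(\Theta)\simeq\de(Rq_{*}\mathcal{L})^{-1}$; base change then identifies $f^{*}O(\Theta)$ with $\de(Rq''_{*}f'^{*}\mathcal{L})^{-1}$, and by (1) we have $f'^{*}\mathcal{L}\simeq O(\sum_{i}\Gamma_{i})\otimes q''^{*}(O(-x_{0})^{\boxtimes d})$. Running the same filtration by graphs and using multiplicativity of the determinant of cohomology over distinguished triangles, the graded contributions involve the normal bundle $O(\Gamma_{k})|_{\Gamma_{k}}\simeq p_{k}^{*}\Omega_{X}^{-1}$ together with the diagonals $O(\Gamma_{i})|_{\Gamma_{k}}\simeq O(\Delta_{ik})$, while the base term $\de(Rq''_{*}q''^{*}O(-x_{0})^{\boxtimes d})$ contributes $(O(-x_{0})^{\boxtimes d})^{\otimes\chi(O_{X})}$ with $\chi(O_{X})=1-g$. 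Collecting the tangent factors into $(\Omega_{X}^{-1})^{\boxtimes d}$, assembling all powers of $O(x_{0})$, and gathering the diagonal contributions into $O(\sum_{i<j}\Delta_{ij})$, then inverting, I expect to recover precisely $\Omega_{X}((d-g+1)x_{0})^{\boxtimes d}\otimes O(-\sum_{i<j}\Delta_{ij})$. The delicate part is purely the bookkeeping: keeping the $x_{0}$-normalization from (1) consistent across all $d$ graded pieces so that the individual powers of $O(x_{0})$ add up to the single exponent $d-g+1$.
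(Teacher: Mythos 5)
Your proposal is correct, and for parts (1)--(3) it is essentially the paper's argument: (1) rests on the rigidification of $\mathcal{L}$ along $\{x_{0}\}\times\Pd$ together with the identity $O_{X\times X^{(d)}}(D)|_{\{x_{0}\}\times X^{(d)}}\simeq O(x_{0})^{(d)}$ (the paper reads this off directly from the definition of the normalized Abel--Jacobi map rather than invoking seesaw, but the content is identical), and your (2) and (3) proceed exactly as in the paper, via the subbundle $O(-x_{0})^{(d)}\hookrightarrow q'_{*}A'^{*}\mathcal{L}$ and the filtration of $O_{\widetilde{D}}$ by graphs. The genuine differences are in (4) and (5). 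For (4) the paper normalizes the theta bundle as $\de(q_{*}(\mathcal{L}(-(d-g+1)x_{0})))^{-1}$, i.e.\ it first twists $\mathcal{L}$ down to fibrewise degree $g-1$; this makes every determinant factor coming from a rank-zero complex trivial, so both the $q'^{*}(O(-x_{0})^{(d)})$-twist from (1) and the structure-sheaf term drop out and only the graph filtration survives. You instead work with $\de(Rq_{*}\mathcal{L})^{-1}$ and track the twist explicitly, picking up $(O(-x_{0})^{\boxtimes d})^{\otimes(1-g)}$ from the base term and one factor of $O(-x_{0})^{\boxtimes d}$ from each of the $d$ graphs; after inverting this correctly assembles into the exponent $d-g+1$, so your bookkeeping goes through. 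The one step you should add is the check that your normalization agrees with the paper's: $\de(Rq_{*}\mathcal{L})\simeq\de(Rq_{*}(\mathcal{L}(-(d-g+1)x_{0})))$ holds precisely because of the rigidification at $x_{0}$ (filter $\mathcal{L}|_{(d-g+1)x_{0}\times\Pd}$ by order of vanishing; each graded piece is $\mathcal{L}|_{\{x_{0}\}\times\Pd}$ up to a constant line, hence trivial), and without this remark the ``theta divisor'' you compute with is a priori a different line bundle than the one used later in the paper. For (5) your route is genuinely different and cleaner: the paper computes $\omega_{X^{d}|X^{(d)}}$ from the identification $T_{X^{(d)}}\simeq q'_{*}(O_{D}(D))$ plus the same graph filtration, whereas you read it off as the ramification divisor of the finite flat $S_{d}$-quotient from the local model $(x,y)\mapsto(x+y,xy)$, whose Jacobian determinant is $x-y$; this is more elementary and decouples (5) from the rest of the lemma.
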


\begin{proof}
For part $(1)$, since $X^{(d)}$ parameterizes degree $d$ effective divisors of $X$, we have a universal divisor $D\hookrightarrow X\times X^{(d)}$ and the corresponding line bundle $O_{X}(D)$ on $X\times X^{(d)}$. Let $\mathcal{M}$ be the line bundle on $X^{(d)}$ given by $O_{X}(D)|_{x_{0}\times X^{(d)}}$. Then by pulling back to $X^{d}$, it is easy to see that $\mathcal{M}\simeq O(x_{0})^{(d)}$. The morphism $X^{(d)}\xrightarrow{A} \Pd$ is given by
$$[D]\in X^{(d)}\rightarrow O_{X}(D)\otimes(\mathcal{M}^{-1}|_{[D]})$$
Hence $A^{'*}(\mathcal{L})\simeq O_{X}(D)\otimes q^{'*}(O(-x_{0})^{(d)})$. This establishes $(1)$.\\
For $(2)$, notice that by $(1)$, we have a natural injection of line bundles:
$$q'^{*}(O(-x_{0})^{(d)})\hookrightarrow A'^{*}(\mathcal{L})\simeq O_{X\times X^{(d)}}(D)\otimes q'^{*}(O(-x_{0})^{(d)})$$
Hence $O(-x_{0})^{(d)}$ is naturally a subbundle of $q'_{*}(A'^{*}(\mathcal{L}))$ on $X^{(d)}$:
$$O(-x_{o})^{(d)}\hookrightarrow q'_{*}(A'^{*}(\mathcal{L}))$$
So the claim follows easily from this.\\
For $(3)$, notice that we have the following short exact sequences on $X\times X^{d}$ for each $i$:
$$0\rightarrow O_{X}(-\Delta_{1}-\cdots-\Delta_{i})\rightarrow O_{X}(-\Delta_{1}-\cdots\Delta_{i-1})\rightarrow O_{\Delta_{i}}(-\Delta_{1}-\cdots-\Delta_{i-1})\rightarrow 0$$
And from this it is easy to see that
$$h^{*}(\textrm{det}(q'_{*}(O_{D}))^{-1})\simeq \otimes^{i}q'_{*}(O_{\Delta_{i}}(\Delta_{1}+\cdots+\Delta_{i-1}))\simeq O(\sum_{i<j}\Delta_{ij})$$
For $(4)$, recall that in our setup, the theta divisor can be identified with:
$$\textrm{det}(q_{*}(\mathcal{L}(-(d-g+1)x_{0})))^{-1}$$
here $q_{*}$ stands for the derived pushforward. Hence from part $(1)$ we see that $f^{*}(O(\Theta))$ is given by:
$$\textrm{det}(q''_{*}(O_{X}(\Delta_{1}+\cdots+\Delta_{d}-(d-g+1)x_{0})))^{-1}$$
We have the exact sequence
$$0\rightarrow O_{X}(-(d-g+1)x_{0})\rightarrow O_{X}(D-(d-g+1)x_{0})\rightarrow O_{D}(D-(d-g+1)x_{0})\rightarrow 0$$
where $D$ stands for the divisor $\Delta_{1}+\cdots+\Delta_{d}$ on $X\times X^{d}$. So we have
$$\textrm{det}(q''_{*}(O_{X}(\Delta_{1}+\cdots+\Delta_{d}-(d-g+1)x_{0})))^{-1}\simeq\textrm{det}(q''_{*}(O_{D}(D-(d-g+1)x_{0})))^{-1}$$
Notice that for each $i$, we have the exact sequence:
$$0\rightarrow O_{X}(\Delta_{1}+\cdots+\Delta_{i-1})\rightarrow O_{X}(\Delta_{1}+\cdots+\Delta_{i})\rightarrow O_{\Delta_{i}}(\Delta_{1}+\cdots+\Delta_{i})\rightarrow 0$$
And it is not hard to see that
$$\textrm{det}(q''_{*}(O_{D}(D-(d-g+1)x_{0})))^{-1}\simeq\otimes^{i}(q''_{*}(O_{\Delta_{i}}(\Delta_{1}+\cdots+\Delta_{i}-(d-g+1)x_{0})))^{-1}$$
Hence the assertion follows from this.\\
For $(5)$, it is well known that the tangent sheaf of $X^{(d)}$ can be identified with $q'_{*}(O_{D}(D))$. Since we have exact sequences
$$0\rightarrow O_{X}(\Delta_{1}+\cdots+\Delta_{i-1})\rightarrow O_{X}(\Delta_{1}+\cdots+\Delta_{i})\rightarrow O_{\Delta_{i}}(\Delta_{1}+\cdots+\Delta_{i})\rightarrow 0$$
the assertion follows easily from this.

\end{proof}

Recall that if $J=\textrm{Pic}^{0}$ is the Jacobian of $X$, then there exists a Poincar\'e line bundle $\mathcal{P}$ on $J\times J$. Moreover, it is well-known that $\mathcal{P}$ induces an equivalence of the derived category of $J$ via the Fourier-Mukai transform:
$$\mathcal{F}\rightarrow Rp_{1 *}(p_{2}^{*}(\mathcal{F})\otimes\mathcal{P})$$
The inverse is given by:
$$\mathcal{G}\rightarrow Rp_{2*}(p_{1}^{*}(\mathcal{G})\otimes\mathcal{P}^{-1})[g]$$
The following lemma is a direct application of the Fourier-Mukai transform:
\begin{lemma}\label{vanishing of pushforward}
Let $K\in D^{b}_{coh}(X^{d})$ such that for any degree zero line bundle $L$ on $X$, we have $R^{i}\Gamma(K\otimes L^{\boxtimes d})=0$ for $i>p$. Then we have $Rf_{*}K\in D^{\leq p}_{coh}(Pic^{d})$ where $f$ is the natural morphism $X^{d}\xrightarrow{f} \Pd$
\end{lemma}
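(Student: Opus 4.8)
The plan is to reinterpret the hypothesis through the Fourier--Mukai transform on the Jacobian and then to exploit the right t-exactness of its inverse. Write $M=Rf_{*}K\in D^{b}_{coh}(\Pd)$; the goal is to show $M\in D^{\leq p}_{coh}(\Pd)$. After tensoring with a fixed degree $d$ line bundle we identify $\Pd$ with $J=\mathrm{Pic}^{0}$ and regard $M$ as a complex on $J$, and we let $\Phi(\mathcal{F})=Rp_{1*}(p_{2}^{*}\mathcal{F}\otimes\mathcal{P})$ denote the Fourier--Mukai transform with quasi-inverse $\Phi^{-1}(\mathcal{G})=Rp_{2*}(p_{1}^{*}\mathcal{G}\otimes\mathcal{P}^{-1})[g]$ as recalled above.

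The key computation is to express the hypothesis as a statement about the derived fibers of $\Phi(M)$. For a degree zero line bundle $L$ let $P_{L}$ be the corresponding degree zero line bundle on $J$ under autoduality, so that $\mathcal{P}|_{\{[L]\}\times J}\simeq P_{L}$ and, by cohomology and base change for the proper projection $p_{1}$, $Li_{[L]}^{*}\Phi(M)\simeq R\Gamma(J,M\otimes P_{L})$. I claim $f^{*}P_{L}\simeq L^{\boxtimes d}$: factoring $f$ (compatibly with Lemma~\ref{properties of abel-jacobian map}) as $X^{d}\xrightarrow{a^{\times d}}J^{d}\xrightarrow{m}J$, where $a$ is the Abel--Jacobi embedding and $m$ the sum homomorphism, the theorem of the cube gives $m^{*}P_{L}\simeq P_{L}^{\boxtimes d}$, while autoduality of the Jacobian gives $a^{*}P_{L}\simeq L$; pulling back yields the claim. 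Combining this with the projection formula,
$$R\Gamma(X^{d},K\otimes L^{\boxtimes d})\simeq R\Gamma(X^{d},K\otimes f^{*}P_{L})\simeq R\Gamma(J,M\otimes P_{L})\simeq Li_{[L]}^{*}\Phi(M).$$
Thus the hypothesis says precisely that $Li_{[L]}^{*}\Phi(M)\in D^{\leq p}$ for every closed point $[L]$ of $J$.

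Next I would deduce $\Phi(M)\in D^{\leq p}_{coh}(J)$ from this pointwise vanishing. Let $q$ be the largest integer with $\mathcal{H}^{q}(\Phi(M))\neq 0$. Since $Li_{[L]}^{*}$ is right t-exact, its top cohomology is the ordinary fiber, $H^{q}(Li_{[L]}^{*}\Phi(M))\simeq\mathcal{H}^{q}(\Phi(M))\otimes k([L])$, which is nonzero for $[L]$ in the support of $\mathcal{H}^{q}(\Phi(M))$ by Nakayama; the pointwise hypothesis forces $q\leq p$, so $\Phi(M)\in D^{\leq p}$. Finally, the inverse transform $\Phi^{-1}(\mathcal{G})=Rp_{2*}(p_{1}^{*}\mathcal{G}\otimes\mathcal{P}^{-1})[g]$ is right t-exact: $Rp_{2*}$ has cohomological amplitude $[0,g]$ because its fibers are proper of dimension $g$, and the shift by $[g]$ brings this to amplitude $[-g,0]$. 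Applying $\Phi^{-1}$ to $\Phi(M)\in D^{\leq p}$ gives $M=\Phi^{-1}(\Phi(M))\in D^{\leq p}$, as desired.

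The main obstacle I anticipate is the clean identification $f^{*}P_{L}\simeq L^{\boxtimes d}$ together with the base-change isomorphism $Li_{[L]}^{*}\Phi(M)\simeq R\Gamma(J,M\otimes P_{L})$; once these are in place the remainder is formal t-structure bookkeeping. Some care is needed to track the fixed twist used to identify $\Pd$ with $J$ (it only translates the point $[L]$ and does not affect the vanishing range), and to invoke derived cohomology and base change rather than the naive version.
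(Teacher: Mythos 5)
Your proof is correct and takes essentially the same route as the paper's: both arguments show that the hypothesis forces the Fourier--Mukai transform of $Rf_{*}K$ to lie in $D^{\leq p}$, and then conclude by applying the inverse transform $Rp_{2*}(p_{1}^{*}(-)\otimes\mathcal{P}^{-1})[g]$, whose cohomological amplitude (pushforward of dimension $\leq g$ followed by the shift $[g]$) is right t-exact. The only difference is one of detail: you spell out the base-change identification of derived fibers, the isomorphism $f^{*}P_{L}\simeq L^{\boxtimes d}$, and the Nakayama step, all of which the paper compresses into ``it is easy to check.''
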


\begin{proof}
From the construction of $\mathcal{P}$, it is easy to check that the conditions on $K$ implies that the Fourier-Mukai transform of $Rf_{*}(K)$ belongs to $D^{\leq p}(\Pd)$, so the claim follows immediately from the fact that $Rp_{2 *}$ has cohomological dimension $g$ and the expression for the inverse transform.
\end{proof}

\section{Moduli of rank $2$ Higgs bundles on $X$}
Most of the materials in subsection $3.1$ and $3.2$ are well-known and have already appeared in one form or the other in our previous work $[13]$, we include them here for reader's convenience. On the other hand, subsection $3.3$ is of vital importance for the construction of the Poincar\'e sheaf. So the reader is recommended to jump directly to $3.3$ and return to $3.1$ and $3.2$ when necessary.

\subsection{Generalities on the geometry of the moduli of Higgs bundles}
In this subsection we review some general definitions and results about the geometry of the stack of Higgs bundles on $X$. We fix an integer $l\geq 2g-1$ and $L$ be a line bundle on $X$ of degree $l$.
\begin{definition}
A rank $2$ $L$-valued Higgs bundle on $X$ is a pair $(E,\phi)$ where $E$ is a rank $2$ vector bundle and the Higgs field $\phi$ is a morphism of vector bundles $E\xrightarrow{\phi} E\otimes L$

\end{definition}

We denote the stack of Higgs bundles by $\hig$. It decomposes as a disjoint union:
$$\hig=\coprod_{k}\hig^{k}$$
where $\hig^{k}$ is the stack of Higgs bundles of degree $k$. It is well-known that $\hig$ is an algebraic stack locally of finite presentation.
\begin{definition}
A Higgs bundle $(E,\phi)$ is called semistable if the following condition hold: For any line subbundle $E_{0}\subseteq E$ that is preserved by the Higgs field $\phi$ in the sense that $\phi(E_{0})\subseteq E_{0}\otimes L$, we have $\textrm{deg}(E_{0})\leq \dfrac{\textrm{deg}(E)}{2}$.

\end{definition}

Semistable Higgs bundles form an open substack, denote it by $\hig_{ss}$.

\begin{Proposition}\label{smoothness of semistable higgs bundles}
$\hig_{ss}$ is smooth of dimension $4l$.
\end{Proposition}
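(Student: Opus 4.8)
The plan is to compute the deformation theory of the stack pointwise and reduce smoothness to the vanishing of an obstruction space, which the hypotheses $l\geq 2g-1$ and semistability are exactly designed to guarantee. At a point $(E,\phi)\in\hig_{ss}$ the tangent complex of the stack is $R\Gamma(X,\mathcal{C}^\bullet)[1]$, where $\mathcal{C}^\bullet$ is the two-term deformation complex
$$\mathcal{C}^\bullet \colon \mathcal{E}nd(E) \xrightarrow{\ \mathrm{ad}_\phi\ } \mathcal{E}nd(E)\otimes L, \qquad \mathrm{ad}_\phi(\psi)=[\phi,\psi],$$
placed in degrees $0$ and $1$. Infinitesimal automorphisms, first-order deformations and obstructions are $\mathbb{H}^0$, $\mathbb{H}^1$ and $\mathbb{H}^2$ of $\mathcal{C}^\bullet$ respectively, so smoothness of $\hig_{ss}$ is equivalent to the vanishing of $\mathbb{H}^2(X,\mathcal{C}^\bullet)$ at every such point.

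First I would fix the dimension by an Euler characteristic count, which does not depend on the vanishing. Since $\mathcal{E}nd(E)$ has rank $4$ and degree $0$, Riemann--Roch on $X$ gives $\chi(\mathcal{E}nd(E))=4(1-g)$ and $\chi(\mathcal{E}nd(E)\otimes L)=4l+4(1-g)$, whence
$$\chi(\mathbb{H}^\bullet(\mathcal{C}^\bullet)) = \chi(\mathcal{E}nd(E))-\chi(\mathcal{E}nd(E)\otimes L) = -4l.$$
Once $\mathbb{H}^2=0$ is known, the dimension of the stack at $(E,\phi)$ is $\dim\mathbb{H}^1-\dim\mathbb{H}^0=-\chi=4l$, uniformly on $\hig_{ss}$, and smoothness follows from the vanishing of the obstruction space everywhere.

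The heart of the argument, and the step I expect to be the main obstacle, is the vanishing of $\mathbb{H}^2$. Running the hypercohomology spectral sequence $E_1^{p,q}=H^q(X,\mathcal{C}^p)$ and using $\dim X=1$, the only contribution in degree $2$ is $E_2^{1,1}=\mathrm{coker}\bigl(H^1(\mathcal{E}nd E)\xrightarrow{\mathrm{ad}_\phi}H^1(\mathcal{E}nd E\otimes L)\bigr)$. By Serre duality, together with the self-duality $\mathcal{E}nd(E)^\vee\cong\mathcal{E}nd(E)$ coming from the trace pairing, this cokernel is dual to the kernel of
$$\mathrm{ad}_\phi \colon H^0(X,\mathcal{E}nd(E)\otimes M)\longrightarrow H^0(X,\mathcal{E}nd(E)\otimes\omega_X), \qquad M:=\omega_X\otimes L^{-1}.$$
So it suffices to show that any $\psi\colon E\to E\otimes M$ commuting with $\phi$ is zero. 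Here both hypotheses enter. Since $l\geq 2g-1$ we have $\deg M=2g-2-l\leq -1$, so $H^0(M)=H^0(M^{\otimes 2})=0$; as $\det(\psi)\in H^0(M^{\otimes 2})$ and the trace of $\psi$ lies in $H^0(M)$, both vanish, and Cayley--Hamilton forces $\psi^2=0$, i.e.\ $\psi$ is nilpotent. If $\psi\neq 0$, let $K=\ker(\psi)\subset E$ be the saturated kernel line subbundle; the relation $\psi^2=0$ makes $\psi$ factor through a nonzero map $E/K\to K\otimes M$, and comparing degrees of these line bundles gives $\deg K\geq \tfrac12(\deg E-\deg M)>\tfrac12\deg E$. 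On the other hand, commutativity of $\phi$ and $\psi$ shows $\phi(K)\subseteq K\otimes L$, so $K$ is a $\phi$-invariant line subbundle, and semistability of $(E,\phi)$ forces $\deg K\leq \tfrac12\deg E$, a contradiction. Hence $\psi=0$, $\mathbb{H}^2=0$, and $\hig_{ss}$ is smooth of dimension $4l$. The delicate points to verify carefully are the identification of the Serre dual of $\mathrm{ad}_\phi$ with $\mathrm{ad}_\phi$ itself and the degree bookkeeping in the factorization through $K\otimes M$.
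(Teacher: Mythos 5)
Your proof is correct. The paper itself states this Proposition without proof, as a well-known fact reviewed in subsection 3.1, so there is no argument in the paper to compare against; what you have written is the standard deformation-theoretic argument (as in Biswas--Ramanan/Nitsure) that the paper implicitly invokes: identify $\mathbb{H}^2$ of the complex $\mathcal{E}nd(E)\xrightarrow{\mathrm{ad}_\phi}\mathcal{E}nd(E)\otimes L$ as the obstruction space, Serre-dualize it to the space of $\phi$-commuting maps $E\to E\otimes \omega_X\otimes L^{-1}$, and kill such a map using $\deg(\omega_X\otimes L^{-1})<0$ (forcing nilpotency via Cayley--Hamilton) together with semistability applied to the $\phi$-invariant kernel line bundle; the dimension count $-\chi(\mathcal{C}^\bullet)=4l$ is also right. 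The two delicate points you flag are indeed fine: the Serre adjoint of $\mathrm{ad}_\phi$ is $-\mathrm{ad}_\phi$ under the trace pairing, which has the same kernel, and the kernel of $\psi$ is automatically saturated since $E/\ker\psi$ embeds in the locally free sheaf $E\otimes\omega_X\otimes L^{-1}$.
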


\begin{definition}\label{generically regular Higgs field}
A Higgs field $\phi$ is called generically regular if $\phi$ is a regular element in $\mathfrak{gl}_{2}(k(\eta))$ after trivializing $E$ and $L$ at the generic point $\eta$ of $X$. Higgs bundles with generically regular Higgs field forms an open substack of $\hig$, denote it by $\widetilde{\hig}$
\end{definition}

\subsection{Hitchin fibration and spectral curve}
In this subsection we are going to review certain properties of the Hitchin fibration that will be used later in the paper. Part $(2)$ of Proposition~\ref{properties of Hitchin fibration} will be frequently used. Lemma~\ref{resolution} will be used in Section $4$ to construct resolutions of sheaves. Most of the properties are well-known(See $[12]$ and the appendix of $[3]$ for a summary).

\begin{definition}
The Hitchin base $H$ is the affine space given by:
$$H^{0}(X,L)\times H^{0}(X,L^{2})$$

\end{definition}

It is well-known that we have the Hitchin fibration:
$$\hig\xrightarrow{h} H \qquad (E,\phi)\rightarrow (\textrm{tr}(\phi),\textrm{det}(\phi))$$
The following proposition summarizes the properties of Hitchin fibration:
\begin{Proposition}\label{properties of Hitchin fibration}
\begin{enumerate1}
\item $H$ is an affine space of dimension $3l-2(g-1)$
\item $h$ is a relative complete intersection morphism with fiber dimension $l+2(g-1)$
\end{enumerate1}

\end{Proposition}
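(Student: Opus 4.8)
Part $(1)$ is a direct Riemann--Roch computation. Since $l\geq 2g-1$, both $L$ and $L^{2}$ have degree strictly greater than $2g-2=\deg\Omega_{X}$, so $H^{1}(X,L)=H^{1}(X,L^{2})=0$ by Serre duality. Hence $h^{0}(L)=l-g+1$ and $h^{0}(L^{2})=2l-g+1$, and therefore $\dim H=h^{0}(L)+h^{0}(L^{2})=3l-2(g-1)$; being a product of two vector spaces it is an affine space of this dimension.

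For part $(2)$ the plan is first to pin down the expected relative dimension by two dimension counts. By deformation theory the tangent complex of $\hig$ at $(E,\phi)$ is computed by the hypercohomology of the two-term complex $\mathrm{End}(E)\xrightarrow{[-,\phi]}\mathrm{End}(E)\otimes L$, whose Euler characteristic is $\chi(\mathrm{End}\,E)-\chi(\mathrm{End}\,E\otimes L)=(4-4g)-(4l+4-4g)=-4l$. Thus $\hig$ has virtual dimension $4l$; combined with Proposition~\ref{smoothness of semistable higgs bundles} (the smooth semistable part already realizes this dimension) and the fact that the obstruction theory exhibits $\hig$ locally as the zero locus of the expected number of equations in a smooth ambient stack, $\hig$ is a local complete intersection of pure dimension $4l$. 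Since $\dim H=3l-2(g-1)$, the expected relative dimension is $4l-(3l-2(g-1))=l+2(g-1)$, the asserted fibre dimension.

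Next I would analyse the fibres through the spectral (Beauville--Narasimhan--Ramanan) correspondence: a Higgs bundle with characteristic polynomial $a=(a_{1},a_{2})\in H$ is the same datum as a torsion-free rank $1$ sheaf on the spectral curve $C_{a}$ cut out in the surface $S$ by $\lambda^{2}-a_{1}\lambda+a_{2}$, so $h^{-1}(a)$ is the stack of such sheaves, i.e.\ the compactified Jacobian stack of $C_{a}$. Because $C_{a}$ is a Cartier divisor in the smooth surface $S$ it is a planar, Gorenstein, locally complete intersection curve; computing $\pi_{*}O_{C_{a}}=O_{X}\oplus L^{-1}$ gives $\chi(O_{C_{a}})=2-2g-l$, hence $p_{a}(C_{a})=2g+l-1$. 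The compactified Jacobian of a planar curve has dimension equal to its arithmetic genus, and passing to the stack (whose generic stabiliser is the scalar $\mathbb{G}_{m}$) subtracts $1$, so every fibre has dimension $p_{a}(C_{a})-1=l+2(g-1)$, uniformly over $H$, and each fibre is itself a local complete intersection.

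Finally I would assemble these inputs. With $\hig$ Cohen--Macaulay (being lci) of pure dimension $4l$, the target $H$ regular, and all fibres equidimensional of dimension $l+2(g-1)=\dim\hig-\dim H$, the miracle flatness criterion shows that $h$ is flat; flatness together with the lci fibres yields that $h$ is a relative complete intersection morphism. (Equivalently, once flatness is known one may invoke the cancellation property of lci morphisms applied to $\hig\to H\to\mathrm{Spec}\,k$ with $H$ smooth.) The main obstacle is the behaviour over the degenerate locus: one must verify that the fibre dimension stays equal to $l+2(g-1)$ over non-integral, and in particular nonreduced, spectral curves---precisely where the general dimension theory of compactified Jacobians of arbitrary planar curves is needed---and, relatedly, that $\hig$ remains lci of pure dimension $4l$ over the unstable Higgs locus, where $H^{1}(\mathrm{End}(E)\otimes L)$ jumps and $h$ ceases to be smooth. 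Over the locus of reduced spectral curves everything is classical; the real content is pushing equidimensionality and hence flatness across the nonreduced boundary $H_{nr}$.
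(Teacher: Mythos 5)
Your part (1) is correct and complete: it is the standard Riemann--Roch computation, and the hypothesis $l\geq 2g-1$ is exactly what kills $H^{1}(L)$ and $H^{1}(L^{2})$. Note that the paper itself gives no proof of this Proposition (it is quoted as well known, with references to the literature), so what follows measures your argument against what a complete proof must contain rather than against a written proof in the text.

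For part (2) your skeleton --- virtual dimension $4l$ from the deformation complex, the spectral correspondence for the fibres, then miracle flatness plus the cancellation property of lci morphisms --- is the right one, but the two statements you defer at the end are not peripheral ``verifications'': they are the entire content of the Proposition, and the tool you propose for the first of them does not apply. Concretely: (a) the identification of $h^{-1}(a)$ with ``the compactified Jacobian stack of $C_{a}$'', together with the theorem that compactified Jacobians of planar curves have dimension equal to the arithmetic genus, is valid only when $C_{a}$ is \emph{reduced}. Over $H_{nr}$ the characteristic polynomial is a square $(\lambda-b)^{2}$, and the translation $\phi\mapsto\phi-b\cdot\textrm{id}$ identifies $h^{-1}(a)$ with the global nilpotent cone $\{(E,n):\textrm{tr}(n)=0,\ n^{2}=0\}$; this stack is not a compactified Jacobian in any sense covered by classical dimension theory (for instance it contains all of $\textrm{Bun}_{2}$ as the locus $n=0$), so no ``general dimension theory of compactified Jacobians of arbitrary planar curves'' can be invoked. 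Its dimension must be computed directly, e.g.\ by stratifying by the saturated kernel line bundle $M_{1}=\ker(n)$: the stratum $n=0$ has dimension $4(g-1)$, while each stratum with $n\neq 0$ (the data being $M_{2}=E/M_{1}$, the divisor of the induced nonzero map $M_{2}\to M_{1}\otimes L$, and the extension class) has dimension exactly $l+2(g-1)$. The asserted fibre dimension therefore holds precisely because $4(g-1)\leq l+2(g-1)$, i.e.\ because $\textrm{deg}(L)\geq 2g-2$; for $\textrm{deg}(L)<2g-2$ the nonreduced fibres are strictly larger than $l+2(g-1)$ and the Proposition is false. Since your fibre-dimension argument nowhere uses the degree of $L$, it cannot be complete. (b) Similarly, that $\hig$ is lci of \emph{pure} dimension $4l$ is not a formal consequence of possessing an obstruction theory of virtual dimension $4l$: presenting $\hig$ as a cone over $\textrm{Bun}_{2}$, one must check on each Harder--Narasimhan stratum that the jump $h^{1}(\textrm{End}(E)\otimes L)$ is at most the codimension of the stratum, which again uses positivity of $\textrm{deg}(L)$. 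Once (a) and (b) are actually proved, your final assembly (miracle flatness over the regular base $H$, then cancellation applied to $\hig\to H\to\textrm{Spec}\,k$) is correct.
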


Let $S$ be the surface defined by the total space of the line bundle $L$. We have:
$$\xymatrix{
S=\textrm{Spec}(\textrm{Sym}(L^{-1})) \ar[d]^{q}\\
X\\
}
$$
Any Higgs bundle can be naturally viewed as a coherent sheaf on the surface $S$, and we have the following lemma:

\begin{lemma}\label{resolution}
Let $(E,\phi)$ be a Higgs bundle. There is a locally free resolution of $E$ as a coherent sheaf on $S$:
$$0\rightarrow q^{*}(E)\otimes q^{*}(L^{-1})\rightarrow q^{*}(E)\rightarrow E\rightarrow 0$$
\end{lemma}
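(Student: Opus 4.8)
The plan is to exhibit the sequence explicitly and then verify exactness by reducing to a standard local statement about modules equipped with an endomorphism. Recall that $S$ is the total space of $L$, so $q_{*}O_{S}=\textrm{Sym}(L^{-1})=\bigoplus_{n\geq 0}L^{-n}$, and $S$ carries a tautological section $\tau\in H^{0}(S,q^{*}L)$, equivalently an inclusion $q^{*}(L^{-1})\xrightarrow{\tau}O_{S}$. The identification of a Higgs bundle $(E,\phi)$ with a coherent sheaf on $S$ is precisely the statement that the $q_{*}O_{S}$-module structure on $E$ is the one in which the degree-one generator acts by $\phi\colon E\to E\otimes L$; equivalently, after pullback, multiplication by $\tau$ is identified with $q^{*}\phi$. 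I would begin by recording this dictionary carefully, since everything downstream is bookkeeping of the $L$-twists it produces.

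First I would write down the map. Tensoring $\tau$ with $q^{*}E$ gives $q^{*}(E)\otimes q^{*}(L^{-1})\xrightarrow{\tau}q^{*}(E)$, and pulling back the Higgs field gives $q^{*}\phi\colon q^{*}(E)\otimes q^{*}(L^{-1})\to q^{*}(E)$. The claim is that
\[
0\to q^{*}(E)\otimes q^{*}(L^{-1})\xrightarrow{\ \tau-q^{*}\phi\ }q^{*}(E)\to E\to 0
\]
is exact, where the last arrow is the natural adjunction surjection $q^{*}(E)\to E$ of sheaves on $S$.

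The core of the argument is a local computation. Over an affine open $U=\textrm{Spec}(A)\subseteq X$ on which $L$ is trivialized, one has $q^{-1}(U)=\textrm{Spec}(A[t])$ with $t$ the spectral coordinate, $E|_{U}$ becomes a finitely generated projective $A$-module $M$, and $\phi$ becomes an $A$-linear endomorphism, again denoted $\phi$. Under these identifications the sequence becomes
\[
0\to M\otimes_{A}A[t]\xrightarrow{\ t-\phi\ }M\otimes_{A}A[t]\to M_{\phi}\to 0,
\]
where $M_{\phi}$ is $M$ with $t$ acting as $\phi$. Here I would verify injectivity of $t-\phi$ by the standard leading-coefficient argument: if $(t-\phi)\sum_{i}m_{i}t^{i}=0$, the top-degree coefficient forces the leading $m_{i}=0$, and one descends inductively. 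The cokernel is identified with $M_{\phi}$ directly from the imposed relations $t\cdot m=\phi(m)$. This is the usual presentation of a module-with-endomorphism over the polynomial ring.

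Finally I would globalize: because $\tau$ and $q^{*}\phi$ are defined intrinsically, the local exact sequences glue to the asserted global one, the twist by $q^{*}(L^{-1})$ being exactly the record of the fact that $\tau$ raises the $L$-degree by one. The only point demanding care—rather than a genuine obstacle—is keeping the twists and the identification of the $q_{*}O_{S}$-module structure consistent across trivializations; once $t\leftrightarrow\tau$ and the $A[t]$-action $\leftrightarrow\phi$ are matched, freeness of $M\otimes_{A}A[t]$ makes exactness immediate. There is no deep difficulty, as the statement is the surface analogue of the companion-matrix resolution of a module over a polynomial ring.
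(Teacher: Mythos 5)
Your proposal is correct and follows essentially the same route as the paper: the paper also defines the map as $T-q^{*}(\phi)$ using the tautological section of $q^{*}(L)$ and then asserts exactness, which it leaves as "easy to see." Your local reduction to the standard presentation $0\to M\otimes_{A}A[t]\xrightarrow{t-\phi}M\otimes_{A}A[t]\to M_{\phi}\to 0$ simply fills in the verification the paper omits.
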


\begin{proof}
By the definition of $S$, there is a tautological section $T$ of $q^{*}(L)$ on $S$, and the morphism
$$q^{*}(E)\otimes q^{*}(L^{-1})\rightarrow q^{*}(E)$$
is given by $T-q^{*}(\phi)$. It is easy to see that this is a resolution of $E$ as an $O_{S}$ module.

\end{proof}

The following simple lemma will be used later
\begin{lemma}\label{a property of semistable higgs bundle}
Let $(E,\phi)$ be a semistable Higgs bundle such that the underlying vector bundle $E$ is not semistable. Then $(E,\phi)\in\widetilde{\hig}$.
\end{lemma}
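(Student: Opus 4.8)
The plan is to argue by contradiction, reducing everything to a single linear-algebra fact in rank $2$: an element of $\mathfrak{gl}_2$ over a field fails to be regular precisely when its minimal polynomial has degree $1$, i.e. when it is a scalar multiple of the identity. Accordingly, I would first record the characterization that $(E,\phi)\notin\widetilde{\hig}$, i.e. $\phi$ is \emph{not} generically regular, is equivalent to saying that after trivializing $E$ and $L$ at the generic point $\eta$ of $X$, the endomorphism $\phi_\eta\in\mathfrak{gl}_2(k(\eta))$ is scalar. The key consequence I want to extract is that a scalar $\phi_\eta$ preserves \emph{every} one-dimensional subspace of $E_\eta$. So assume for contradiction that $(E,\phi)\notin\widetilde{\hig}$, so that $\phi_\eta$ is scalar.

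Next I would exploit the hypothesis that $E$ is not semistable as a vector bundle. Since $E$ has rank $2$, instability gives a subsheaf of slope larger than that of $E$; replacing it by its saturation produces a line subbundle $E_1\subseteq E$ with $\deg(E_1)>\tfrac{1}{2}\deg(E)$ (saturating only increases the degree). Because $E/E_1$ is then a line bundle, I would consider the composite of sheaf maps
$$E_1\hookrightarrow E\xrightarrow{\phi} E\otimes L\twoheadrightarrow (E/E_1)\otimes L,$$
which is a morphism of line bundles on $X$. Since $\phi_\eta$ is scalar it preserves the line $(E_1)_\eta$, so this composite vanishes at the generic point $\eta$; as $X$ is an integral curve, a morphism of line bundles vanishing at $\eta$ vanishes identically. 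Hence $\phi(E_1)\subseteq E_1\otimes L$, i.e. $E_1$ is $\phi$-invariant.

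Finally I would assemble the contradiction: $E_1$ is a $\phi$-invariant line subbundle with $\deg(E_1)>\tfrac{1}{2}\deg(E)$, which directly violates the semistability of the Higgs bundle $(E,\phi)$. Therefore $\phi$ must be generically regular, that is $(E,\phi)\in\widetilde{\hig}$, as claimed. The statement is elementary, so there is no serious obstacle; the only point requiring a little care is the translation between the two pictures — the intrinsic definition of ``generically regular'' in terms of regular elements of $\mathfrak{gl}_2(k(\eta))$ on the one hand, and invariance of line subbundles on the other — together with the passage from generic invariance to genuine invariance over all of $X$, which rests on the integrality of the curve and the fact that $E/E_1$ is locally free of rank $1$.
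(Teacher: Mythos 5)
Your proof is correct and is essentially the paper's argument run in contrapositive form: both hinge on the destabilizing line subbundle together with the rank-$2$ dichotomy that a non-regular element of $\mathfrak{gl}_{2}(k(\eta))$ is scalar, hence generically preserves every line subbundle, which Higgs semistability forbids for a subbundle of degree greater than $\frac{\deg(E)}{2}$. The only cosmetic difference is that you work entirely at the generic point and then spread the invariance $\phi(E_{1})\subseteq E_{1}\otimes L$ over all of $X$ by integrality, whereas the paper first arranges (by assuming $\deg(E)$ large) that the destabilizing subbundle $\mathcal{M}$ has a global section $s$ and concludes regularity directly from the generic linear independence of $s$ and $\phi(s)$.
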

\begin{proof}
Because $E$ is not semistable as a vector bundle, we can choose a line subbundle $\mathcal{M}\hookrightarrow E$ such that
$$\textrm{deg}(\mathcal{M})>\dfrac{\textrm{deg}(E)}{2}$$
We may assume $\textrm{deg}(E)$ is sufficiently large so that we have a global section $O_{X}\xrightarrow{s}\mathcal{M}$. Since $(E,\phi)$ is semistable as a Higgs bundle, the Higgs field cannot fix $\mathcal{M}$, hence $s$ and $\phi(s)$ are linearly independent at the generic point of $X$, hence $(E,\phi)\in\widetilde{\hig}$.

\end{proof}

\begin{lemma}\label{description of the dual}
Let $(E,\phi)$ be a rank two Higgs bundle on $X$, viewed as a coherent sheaf on the corresponding spectral curve $C$. Then $\Hom_{O_{C}}(E,O_{C})$ is isomorphic to $\check{E}\otimes L^{-1}$ as a Higgs bundle on $X$, with the Higgs field induced from $\phi$.
\end{lemma}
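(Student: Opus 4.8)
The plan is to pass through the ambient surface $S=\mathrm{Spec}(\mathrm{Sym}(L^{-1}))$ and exploit the two–term locally free resolution of $E$ supplied by Lemma~\ref{resolution}. The point is that over the possibly singular or nonreduced spectral curve $C$ no finite locally free resolution of $E$ over $O_C$ is available, whereas over the smooth surface $S$ the sheaf $E$ has projective dimension one. Write $q:S\to X$ for the projection and $T\in H^0(S,q^*L)$ for the tautological section, so that $C\subset S$ is cut out by the characteristic polynomial $\chi=T^2-q^*(\mathrm{tr}\,\phi)\,T+q^*(\det\phi)\in H^0(S,q^*L^2)$; in particular $O_S(C)\cong q^*L^2$, and the normal bundle is $N_C=O_S(C)|_C\cong q^*(L^2)|_C$.

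First I would compute $\mathcal{E}xt^1_{O_S}(E,O_S)$. Applying $\Hom_{O_S}(-,O_S)$ to the resolution $0\to q^*E\otimes q^*L^{-1}\xrightarrow{T-q^*\phi} q^*E\to E\to 0$ and using that the two outer terms are locally free yields a two–term presentation $0\to q^*\check{E}\xrightarrow{T-q^*\phi^t} q^*\check{E}\otimes q^*L\to \mathcal{E}xt^1_{O_S}(E,O_S)\to 0$, where $\phi^t:\check{E}\to\check{E}\otimes L$ is the transpose of $\phi$ (the dual of $T-q^*\phi$ is again multiplication by $T$ minus $q^*$ of the transposed endomorphism). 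Comparing this with the resolution of Lemma~\ref{resolution} applied to the Higgs bundle $(\check{E}\otimes L,\phi^t)$ — whose spectral curve is again $C$, since transposition preserves trace and determinant — identifies $\mathcal{E}xt^1_{O_S}(E,O_S)$ with the $O_C$–module attached to $(\check{E}\otimes L,\phi^t)$, the $O_C$–structure (multiplication by $T$) being exactly the transposed Higgs field.

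Next I would relate the $O_C$–dual to this $\mathcal{E}xt$. Since $C\hookrightarrow S$ is a Cartier divisor, $O_C$ has projective dimension one over $O_S$, $\Hom_{O_S}(O_C,O_S)=0$, and $\mathcal{E}xt^1_{O_S}(O_C,O_S)\cong N_C$. The change–of–rings spectral sequence $\mathcal{E}xt^p_{O_C}(E,\mathcal{E}xt^q_{O_S}(O_C,O_S))\Rightarrow \mathcal{E}xt^{p+q}_{O_S}(E,O_S)$ then degenerates to give $\mathcal{E}xt^1_{O_S}(E,O_S)\cong \Hom_{O_C}(E,N_C)$, hence $\Hom_{O_C}(E,O_C)\cong \mathcal{E}xt^1_{O_S}(E,O_S)\otimes N_C^{-1}$. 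Combining with the previous step and twisting the corresponding Higgs bundle by $N_C^{-1}\cong q^*L^{-2}|_C$, which on the level of Higgs bundles on $X$ amounts to tensoring by $L^{-2}$, yields $\Hom_{O_C}(E,O_C)\cong(\check{E}\otimes L\otimes L^{-2},\phi^t)=(\check{E}\otimes L^{-1},\phi^t)$, the transposed Higgs field being the one induced from $\phi$. This is the asserted isomorphism.

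The routine parts are the diagram chase producing the presentation of $\mathcal{E}xt^1$ and the normal bundle computation; the step requiring the most care is tracking the $O_C$–module structure through the dualization, i.e. verifying that the eigenvalue action (multiplication by $T$) on $\mathcal{E}xt^1_{O_S}(E,O_S)$ corresponds precisely to the transpose Higgs field $\phi^t$ rather than to $\phi$ itself, and then checking that the two twists by powers of $L$ (the $+1$ from the dualized resolution and the $-2$ from $N_C^{-1}$) combine to the single $L^{-1}$ in the statement. One should also confirm that the argument is insensitive to the singularities of $C$: all the identifications above hold for an arbitrary rank two Higgs bundle because they rest only on the projective dimension one resolutions over the smooth surface $S$, with no reducedness or integrality hypothesis on the spectral curve.
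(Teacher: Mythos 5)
Your proof is correct. There is nothing in the paper to compare it against: the paper states this lemma without proof, treating it as a standard fact of the spectral correspondence (in the spirit of $[16]$), and then uses it in part $(3)$ of Lemma~\ref{factorization of tau}. Your argument is a complete and essentially canonical justification, and the delicate points all check out: the $O_{S}$-dual of the resolution of Lemma~\ref{resolution} has differential $T-q^{*}\phi^{t}$, and the kernel term $\Hom_{O_{S}}(E,O_{S})$ vanishes because $E$ is torsion on $S$, so $\mathcal{E}xt^{1}_{O_{S}}(E,O_{S})$ is exactly the spectral sheaf of $(\check{E}\otimes L,\phi^{t})$, whose spectral curve is again $C$ since transposition preserves trace and determinant; duality for the Cartier divisor $C\subset S$ (equivalently $i^{!}O_{S}\simeq N_{C}[-1]$, which is what your change-of-rings spectral sequence encodes) gives $\Hom_{O_{C}}(E,N_{C})\simeq \mathcal{E}xt^{1}_{O_{S}}(E,O_{S})$; and the twist bookkeeping $N_{C}\simeq q^{*}(L^{2})|_{C}$ together with the $+1$ coming from the dualized resolution combine to $L\otimes L^{-2}=L^{-1}$, the Higgs field being unaffected by tensoring with a line bundle pulled back from $X$ (projection formula). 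Your closing observation is also exactly why the statement is usable where the paper needs it: since the argument rests only on the projective dimension one resolution over the smooth surface $S$ and on $C$ being a Cartier divisor in $S$, it applies verbatim to reducible and nonreduced spectral curves, which is the locus the paper is principally concerned with.
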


\subsection{Higgs bundles and Hilbert schemes}
In this subsection we will define and study the geometric properties of the stacks mentioned in subsection $1.4$. In particular we will construct Diagram \ref{fundamental diagram} mentioned in subsection $1.4$:
$$
\xymatrix{
 & \mathcal{B} \ar[r] \ar[d]^{\pi} & \HS^{d}\\
\mathcal{Z} \ar[r] & \mathcal{Y} & \\
}
$$
where $\mathcal{Y}$ is a smooth stack, $\mathcal{Z}$ is a smooth closed substack of $\mathcal{Y}$ and $\mathcal{B}$ is the blowup of $\mathcal{Y}$ along $\mathcal{Z}$. The construction is given in Proposition~\ref{resolve the rational map}. After that we will give some dimension estimates and a few further properties of the fundamental diagram which will be used in section $4$ and section $5$.\\

Let $X$ be a smooth projective curve of genus $g$. Fix a point $x_{0}$ in $X$. Let $l$ and $m$ be integers such that $m\geq 4g$, $l\geq 2g-1$ and $m>l$. Let $L$ be a fixed line bundle on $X$ of degree $l$. First let us define the main objects of study.

\begin{construction}
Let $\mathcal{X}$ be the stack classifying the data $(E,s,\sigma)$ where $E$ is a rank two vector bundle on $X$ of degree $m$ with $H^{1}(E)=0$, $H^{0}(\check{E}\otimes L)=0$, and $E$ is globally generated, $s$ a nonzero global section of $E$ such that the quotient $M=E/O_{X}$ is a line bundle, $\sigma$ is a trivialization of $M_{x_{0}}$.
\end{construction}

\begin{construction}\label{definition of Y}
Let $\mathcal{Y}$ be the stack classifying the data $(E,\varphi,s,\sigma)$ where $(E,s,\sigma)$ satisfying the same condition as $\mathcal{X}$, and $\varphi\in \textrm{Hom}(E,M\otimes L)$.
\end{construction}

\begin{construction}
Let $\hig'^{m}$ be the moduli stack classifying the data $(E,\phi,s,\sigma)$ where $(E,s,\sigma)$ satisfying the same condition as in $\mathcal{X}$, and $(E,\phi)$ is a Higgs bundle.
\end{construction}

We have the following lemma:
\begin{lemma}\label{elementary properties of Y}
\begin{enumerate1}
\item $\mathcal{Y}$ is naturally a vector bundle over $\mathcal{X}$
\item $\hig'^{m}$ is a closed substack of $\mathcal{Y}$
\item $\mathcal{Y}$ is smooth of dimension $2l+2m+1$
\item The natural morphism $\hig'^{m}\rightarrow\hig$ is smooth of dimension $m+2(1-g)+1$
\item $\hig'^{m}$ has dimension $4l+m+2(1-g)+1$
\end{enumerate1}
\end{lemma}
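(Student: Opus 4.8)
The plan is to study all three stacks relative to $\mathcal{X}$ and to read off every assertion from cohomology and base change together with the two vanishing conditions $H^{1}(E)=0$ and $H^{0}(\check{E}\otimes L)=0$ that are built into the definition of $\mathcal{X}$. For (1), the forgetful morphism $\mathcal{Y}\to\mathcal{X}$ has fibre $\textrm{Hom}(E,M\otimes L)=H^{0}(\check{E}\otimes M\otimes L)$ over a point $(E,s,\sigma)$. Dualizing $0\to O_{X}\xrightarrow{s}E\xrightarrow{p}M\to 0$ and tensoring with $M\otimes L$ yields
$$0\to L\to\check{E}\otimes M\otimes L\to M\otimes L\to 0.$$
Since $\deg L=l>2g-2$ and $\deg(M\otimes L)=m+l>2g-2$, both $H^{1}(L)$ and $H^{1}(M\otimes L)$ vanish, so $H^{1}(\check{E}\otimes M\otimes L)=0$ at every point of $\mathcal{X}$. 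By cohomology and base change the relative $\textrm{Hom}$ is locally free of rank $\chi(\check{E}\otimes M\otimes L)=m+2l+2-2g$ and $\mathcal{Y}$ is its total space, which is (1).

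To prove (3) I would first record that $\mathcal{X}$ is smooth of dimension $m+2g-1$. Sending $(E,s,\sigma)$ to $(M,\sigma)$ fibres $\mathcal{X}$ over the rigidified Picard scheme of degree $m$ line bundles, of dimension $g$; the fibre over $(M,\sigma)$ is the extension space $\textrm{Ext}^{1}(M,O_{X})=H^{1}(\check{M})$, of constant dimension $m+g-1$ because $\deg\check{M}<0$ forces $H^{0}(\check{M})=0$. Moreover $\mathcal{X}$ carries no nontrivial automorphisms: an automorphism fixing $s$ induces a scalar on $M$ which must be $1$ by compatibility with $\sigma$, hence differs from the identity by an element of $\textrm{Hom}(M,O_{X})=H^{0}(\check{M})=0$. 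Thus $\mathcal{X}$ is a smooth algebraic space of dimension $g+(m+g-1)=m+2g-1$, and adding the rank from (1) gives $\dim\mathcal{Y}=2l+2m+1$; a vector bundle over a smooth base is smooth.

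The heart of the argument is (2). There is a natural morphism $\hig'^{m}\to\mathcal{Y}$ over $\mathcal{X}$ sending $(E,\phi,s,\sigma)$ to $(E,\varphi,s,\sigma)$ with $\varphi=(p\otimes\textrm{id})\circ\phi$. Applying $\textrm{Hom}(E,-)$ to the twisted sequence $0\to L\to E\otimes L\to M\otimes L\to 0$ produces
$$\textrm{Hom}(E,E\otimes L)\to\textrm{Hom}(E,M\otimes L)\xrightarrow{\delta}\textrm{Ext}^{1}(E,L),$$
and the kernel of the first arrow is $\textrm{Hom}(E,L)=H^{0}(\check{E}\otimes L)=0$. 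Hence a given $\varphi$ lifts to a Higgs field exactly when $\delta(\varphi)=0$, and when it lifts the lift is unique. Because $h^{0}(\check{E}\otimes L)=0$ holds uniformly on $\mathcal{X}$, the relative $\textrm{Ext}^{1}$ sheaf is locally free and commutes with base change, so $\delta$ globalizes to a morphism of vector bundles over $\mathcal{X}$; composing it with the tautological section of $\mathcal{Y}$ exhibits $\hig'^{m}$ as the zero locus of a section of a vector bundle, hence a closed substack, while the uniqueness of the lift identifies this zero locus with $\hig'^{m}$. I expect this to be the \textbf{main obstacle}: it is precisely the two vanishing conditions imposed in $\mathcal{X}$ that force the lift to exist and be unique and that make the relevant Ext sheaf locally free, so the whole statement hinges on having engineered $\mathcal{X}$ correctly.

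Finally, (4) follows by viewing $\hig'^{m}\to\hig$ as the morphism forgetting $(s,\sigma)$: over the open substack of $\hig^{m}$ where $E$ satisfies the defining conditions, the section $s$ varies in an open subset of the vector bundle with fibre $H^{0}(E)$, of rank $h^{0}(E)=\chi(E)=m+2-2g$ (using $H^{1}(E)=0$), and $\sigma$ is a trivialization of the line $M_{x_{0}}$, i.e. a $\mathbb{G}_{m}$-torsor; both steps are smooth, giving relative dimension $(m+2-2g)+1=m+2(1-g)+1$. For (5) I combine (4) with $\dim\hig=4l$, which is immediate from Proposition~\ref{properties of Hitchin fibration}: $\dim H=3l-2(g-1)$ and the Hitchin map has fibre dimension $l+2(g-1)$, so $\hig$ is pure of dimension $4l$ and therefore $\dim\hig'^{m}=4l+m+2(1-g)+1$.
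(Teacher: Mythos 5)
Your proof is correct and follows essentially the same route as the paper: your part (2) is exactly the paper's argument (the connecting map into $\textrm{Ext}^{1}(E,L)=H^{1}(\check{E}\otimes L)$, whose relative version is a vector bundle because $H^{0}(\check{E}\otimes L)=0$, with $\hig'^{m}$ realized as the zero locus of the induced section over $\mathcal{Y}$), and your cohomological verification in (1) just fills in what the paper summarizes as "the condition on $l$ and $m$." The only cosmetic difference is in (3)--(5), where you fibre $\mathcal{X}$ over the rigidified Picard scheme via extension classes, while the paper counts dimensions via the stack of rank two bundles (smooth of dimension $4(g-1)$) together with $H^{1}(E)=0$; both give the same numbers.
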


\begin{proof}
For $(1)$, notice that there is a natural morphism $\mathcal{Y}\rightarrow\mathcal{X}$ given by:
$$(E,\varphi,s,\sigma)\rightarrow (E,s,\sigma)$$
And the condition on $l$ and $m$ guarantees that $H^{1}(\check{E}\otimes M\otimes L)=0$. Hence the assertion follows from this.\\
For $(2)$, observe that the morphism $E\rightarrow M$ induced a natural morphism:
$$\textrm{Hom}(E,E\otimes L)\rightarrow \textrm{Hom}(E,M\otimes L)$$
Hence we have a morphism $\hig'^{m}\rightarrow \mathcal{Y}$ given by:
$$(E,\phi,s,\sigma)\rightarrow (E,\varphi,s,\sigma)$$
by sending $\phi$ to its image in $\textrm{Hom}(E,M\otimes L)$. We have an exact sequence:
$$0\rightarrow \check{E}\otimes L\rightarrow \check{E}\otimes E\otimes L\rightarrow \check{E}\otimes M\otimes L\rightarrow 0$$
Now consider:
$$\xymatrix{
X\times\mathcal{Y} \ar[d]^{f}\\
\mathcal{Y}\\
}
$$
The assumption $H^{0}(\check{E}\otimes L)=0$ implies that $R^{1}f_{*}(\check{E}\otimes L)$ is a vector bundle on $\mathcal{Y}$. From the definition of $\mathcal{Y}$, $\varphi$ induces a global section of $f_{*}(\check{E}\otimes M\otimes L)$:
$$O_{\mathcal{Y}}\xrightarrow{\varphi} f_{*}(\check{E}\otimes M\otimes L)$$
and it is easy to see that $\hig'^{m}$ is the vanishing locus of the morphism:
$$O_{\mathcal{Y}}\xrightarrow{\varphi}f_{*}(\check{E}\otimes M\otimes L)\rightarrow R^{1}f_{*}(\check{E}\otimes L)$$. So $(2)$ follows from this.\\
The rest of the lemma follows easily from the fact that the stack of rank two vector bundles on $X$ is smooth of dimension $4(g-1)$ and our assumption that $H^{1}(E)=0$.

\end{proof}

Next let us consider:
$$\xymatrix{
X\times\mathcal{Y} \ar[d]^{f}\\
\mathcal{Y}\\
}
$$
By the definition of $\mathcal{Y}$, we have the following morphism of vector bundles on $X\times\mathcal{Y}$:
$$E\xrightarrow{\varphi} M\otimes L$$
Hence this induced a section of $M\otimes L$ via:
$$O_{X}\xrightarrow{s} E\xrightarrow{\varphi} M\otimes L$$
So we get:
$$O\rightarrow f_{*}(M\otimes L)$$
on $\mathcal{Y}$ by pushing it forward.
\begin{lemma}\label{definition of Z}
Let $\mathcal{Z}$ be the vanishing locus of the morphism $O\rightarrow f_{*}(M\otimes L)$ on $\mathcal{Y}$. Then $\mathcal{Z}$ is naturally a vector bundle over $\mathcal{X}$ and it is a subbundle of $\mathcal{Y}$. $\mathcal{Z}$ has dimension $m+l+g$.
\end{lemma}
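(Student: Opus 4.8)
The plan is to exploit the fact that the section $O_{\mathcal{Y}}\to f_{*}(M\otimes L)$ cutting out $\mathcal{Z}$ is \emph{linear} along the fibers of the projection $\pi\colon\mathcal{Y}\to\mathcal{X}$. By Lemma~\ref{elementary properties of Y}(1), $\mathcal{Y}$ is the total space of the vector bundle $V=f_{*}(\check{E}\otimes M\otimes L)$ on $\mathcal{X}$, and the defining section sends $(E,\varphi,s,\sigma)$ to the composite $\varphi\circ s\in H^{0}(M\otimes L)$, which depends linearly on $\varphi$. So I would first realize the section as $\pi^{*}(\Phi)\circ\tau$, where $\tau\colon O_{\mathcal{Y}}\to\pi^{*}V$ is the tautological section and $\Phi\colon V\to W:=f_{*}(M\otimes L)$ is the bundle map on $\mathcal{X}$ given by precomposition with $s$. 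Under this description $\mathcal{Z}$ is exactly the total space of $\ker\Phi$, so the whole statement reduces to showing that $\ker\Phi$ is a subbundle of $V$ and computing its rank.

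To analyze $\Phi$ I would dualize the universal sequence $0\to O_{X}\xrightarrow{s}E\to M\to 0$ on $X\times\mathcal{X}$ to get $0\to\check{M}\to\check{E}\xrightarrow{\check{s}}O_{X}\to 0$, and tensor with $M\otimes L$ to obtain
\[
0\to L\to\check{E}\otimes M\otimes L\xrightarrow{\check{s}\otimes 1}M\otimes L\to 0,
\]
where $L$ denotes the pullback of the fixed line bundle to $X\times\mathcal{X}$. The middle arrow is precisely the sheaf-level version of $\varphi\mapsto\varphi\circ s$, so pushing forward along $X\times\mathcal{X}\to\mathcal{X}$ recovers $\Phi$. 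Using $R^{1}f_{*}(\check{E}\otimes M\otimes L)=0$ (the vanishing $H^{1}(\check{E}\otimes M\otimes L)=0$ recorded in the proof of Lemma~\ref{elementary properties of Y}), the long exact sequence collapses to
\[
0\to f_{*}L\to V\xrightarrow{\ \Phi\ }W\to R^{1}f_{*}L\to 0.
\]

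The crucial point, and the step I expect to carry the most weight, is that because $L$ is pulled back from the fixed curve $X$, the outer terms $f_{*}L=H^{0}(X,L)\otimes O_{\mathcal{X}}$ and $R^{1}f_{*}L=H^{1}(X,L)\otimes O_{\mathcal{X}}$ are \emph{trivial} bundles of constant rank. In particular $\mathrm{coker}\,\Phi=R^{1}f_{*}L$ is locally free, so $\Phi$ has locally constant rank and $\ker\Phi=f_{*}L$ is a subbundle of $V$. This yields at once that $\mathcal{Z}$ is a vector bundle over $\mathcal{X}$ and a subbundle of $\mathcal{Y}$, of rank $h^{0}(X,L)=l+1-g$ (using $l\geq 2g-1$, so $H^{1}(L)=0$ by Serre duality).

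For the dimension count I would combine this rank with $\dim\mathcal{X}$. Since $V=f_{*}(\check{E}\otimes M\otimes L)$ has rank $h^{0}(\check{E}\otimes M\otimes L)=m+2l+2-2g$ by Riemann--Roch, Lemma~\ref{elementary properties of Y}(3) gives $\dim\mathcal{X}=(2l+2m+1)-(m+2l+2-2g)=m+2g-1$, whence
\[
\dim\mathcal{Z}=\dim\mathcal{X}+h^{0}(X,L)=(m+2g-1)+(l+1-g)=m+l+g,
\]
as claimed. The only genuinely delicate bookkeeping is checking that the pushed-forward middle arrow is literally $\Phi$ (rather than merely agreeing with it fiberwise) and that its cokernel is locally free; both follow formally once the linearity of the defining section is set up functorially and $L$ is recognized as pulled back from $X$, after which the fiberwise rank computation does all the remaining work.
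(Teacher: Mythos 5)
Your proposal is correct and follows essentially the same route as the paper: both arguments dualize the universal sequence $0\to O_{X}\to E\to M\to 0$, tensor with $M\otimes L$ to get $0\to L\to\check{E}\otimes M\otimes L\to M\otimes L\to 0$, and identify $\mathcal{Z}$ with the total space over $\mathcal{X}$ of the pushforward of $L$, from which the subbundle statement and the dimension count $\dim\mathcal{Z}=\dim\mathcal{X}+h^{0}(L)=(m+2g-1)+(l+1-g)=m+l+g$ follow. Your write-up merely makes explicit the linearity of the defining section and the local freeness of the cokernel (indeed $R^{1}f_{*}L=0$ since $l\geq 2g-1$), details the paper leaves implicit.
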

\begin{proof}
Consider:
$$\xymatrix{
X\times\mathcal{Y} \ar[d]^{f}\\
\mathcal{Y}\\
}
$$
The natural exact sequence:
$$0\rightarrow O_{X}\rightarrow E\rightarrow M\rightarrow 0$$
induces:
$$0\rightarrow\check{M}\rightarrow\check{E}\rightarrow O_{X}\rightarrow 0$$
Hence we get:
$$0\rightarrow L\rightarrow\check{E}\otimes M\otimes L\rightarrow M\otimes L\rightarrow 0$$
Now $\varphi$ can be identified with a section of
$$f_{*}(\check{E}\otimes M\otimes L)$$
and the section $O\rightarrow f_{*}(M\otimes L)$ is given by the image of $\varphi$ in $f_{*}(M\otimes L)$. From this it is easy to see that $\mathcal{Z}$ can be identified with the vector bundle over $\mathcal{X}$ given by $g_{*}(L)$ where $g$ is the morphism:
$$\xymatrix{
X\times\mathcal{X} \ar[d]^{g}\\
\mathcal{X}\\
}
$$

\end{proof}
Set $d=l+m$. The following observation is central to the construction of the Poincar\'e sheaf:
\begin{Proposition}\label{resolve the rational map}
Let $S$ be the smooth surface given by the total space of the line bundle $L$, and $\mathcal{B}$ be the blowup of $\mathcal{Y}$ along the closed substack $\mathcal{Z}$. Then:
\begin{enumerate1}
\item There exists a natural morphism $\mathcal{Y}\setminus\mathcal{Z}\xrightarrow{p} \HS^{d}$ such that its image is contained in the open subscheme $V$ of $\HS^{d}$ defined in Corollary~\ref{Hilbert scheme of P1 and S}.
\item The morphism $\mathcal{Y}\setminus\mathcal{Z}\xrightarrow{p} \HS^{d}$ extends to a morphism of stacks:
$\mathcal{B}\rightarrow \HS^{d}$ such that the image of $\mathcal{B}$ is contained in the open subscheme $V$.
\item The morphism $\mathcal{Y}\setminus\mathcal{Z}\xrightarrow{p} V\subseteq\HS^{d}$ is smooth.

\end{enumerate1}
Hence we have the following diagram:
$$\xymatrix{
\mathcal{B} \ar[d]^{\pi} \ar[r] & \HS^{d}\\
\mathcal{Y}
}
$$
\end{Proposition}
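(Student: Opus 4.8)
The plan is to establish the three assertions in turn, viewing part~(1) as essentially the explicit procedure described just above the statement, part~(2) as an application of Corollary~\ref{bl resolve nonflatness}, and part~(3) as the substantive point. For part~(1), on $\mathcal{Y}\setminus\mathcal{Z}$ the section $t=\varphi\circ s$ of $M\otimes L$ is fibrewise nonzero, so its vanishing locus $D$ is a relative effective Cartier divisor on $X\times(\mathcal{Y}\setminus\mathcal{Z})$, finite flat over $\mathcal{Y}\setminus\mathcal{Z}$ of relative length $\deg(M\otimes L)=l+m=d$. Since $t|_D=0$ the composite $O_D\xrightarrow{s_D}E|_D\to(M\otimes L)|_D$ vanishes, and as $0\to O_X\to E\to M\to 0$ is locally split, $\varphi|_D$ descends to a map $M|_D\to(M\otimes L)|_D$, i.e. to a section $t_D\in H^0(L\otimes O_D)$. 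The pair $(D,t_D)$ determines a point of the total space $V$ of $\pi_*(L_D)$ over $X^{(d)}$ by Proposition~\ref{an open subscheme of HS}, and embedding $D$ into $S$ via $t_D$ realises it as a point of $\HS^d$ lying in $V$ (Corollary~\ref{Hilbert scheme of P1 and S}); this is the morphism $p$.

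For part~(2) I would apply Corollary~\ref{bl resolve nonflatness} to $f\colon X\times\mathcal{Y}\to\mathcal{Y}$ (whose fibres are the geometrically integral curve $X$), the line bundle $M\otimes L$, and the section $t$. The pushforward $f_*(M\otimes L)$ is locally free of rank $d-g+1$ and commutes with base change, because $d=l+m>2g-2$ forces $H^1$ to vanish on fibres; and by Lemma~\ref{definition of Z} one has $\dim\mathcal{Z}=m+l+g$, so $\mathcal{Z}$ has codimension $(2l+2m+1)-(m+l+g)=d+1-g$ in $\mathcal{Y}$ (using $\dim\mathcal{Y}=2l+2m+1$ from Lemma~\ref{elementary properties of Y}), exactly the rank $d-g+1$. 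Thus the hypotheses hold with $n+1=d-g+1$, and the corollary produces on $X\times\mathcal{B}$ a universal relative effective Cartier divisor $\mathcal{D}$ over $\mathcal{B}$ extending $D$, cut out by an extended section $O(\mathcal{E})\to M\otimes L$. As the pullback of $t$ is the product of this extended section with the canonical section of $O(\mathcal{E})$, it still vanishes on $\mathcal{D}$; the factorisation of part~(1) then goes through verbatim and yields $t_{\mathcal{D}}\in H^0(L\otimes O_{\mathcal{D}})$, hence a morphism $\mathcal{B}\to V$ agreeing with $p$ over $\mathcal{Y}\setminus\mathcal{Z}$.

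The heart of the proposition is the smoothness in part~(3). Both $\mathcal{Y}\setminus\mathcal{Z}$ and $V$ are smooth, of dimensions $2l+2m+1=2d+1$ and $2d$ (the latter being the total space of a rank-$d$ bundle over the $d$-dimensional $X^{(d)}$), so the expected relative dimension is $1$, and I would exhibit this extra dimension explicitly. The scheme $\mathcal{Y}$ carries a free $\mathbb{G}_m$-action rescaling the trivialisation $\sigma$, and since $p$ depends only on $(E,\varphi,s)$ it is invariant under this action. Hence $p$ factors as the $\mathbb{G}_m$-torsor $\mathcal{Y}\setminus\mathcal{Z}\to(\mathcal{Y}\setminus\mathcal{Z})/\mathbb{G}_m$ (smooth of relative dimension $1$) followed by an induced map $\bar p$ of smooth $2d$-dimensional spaces. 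It then suffices to prove that $\bar p$ is an open immersion, which is precisely the incarnation in this setting of the projective-duality isomorphism of subsection~$1.4$ identifying $(\mathcal{Y}\setminus\mathcal{Z})/\mathbb{G}_m$ with an open subscheme of $V\subseteq T^{*}P=\HS^d$.

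Concretely, $\bar p$ being an open immersion amounts to reconstructing $(E,\varphi,s)$ from $(D,t_D)$ uniquely and infinitesimally rigidly. From $(D,t_D)$ one recovers $M=O_X(D)\otimes L^{-1}$, so that $M\otimes L=O_X(D)$ carries the tautological section $t$; the remaining freedom is an extension class $e\in\mathrm{Ext}^1(M,O_X)=H^1(M^{-1})$ together with a lift $\varphi$ of $t$ through $\varphi\circ s=t$, and the condition $\bar\varphi|_D=t_D$ then pins $\varphi$ down once $e$ is fixed, since $H^0(L(-D))=0$ makes $H^0(L)\hookrightarrow H^0(L\otimes O_D)$ injective. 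I expect the main obstacle to be proving that the resulting reconstruction map is an isomorphism on tangent spaces: this reduces to the nondegeneracy of a cup-product pairing between $H^1(M^{-1})$ and $H^0(L\otimes O_D)/\mathrm{im}\,H^0(L)$, two spaces of equal dimension $m+g-1$, which should follow from $H^1(L)=0$ together with the global generation of $E$ built into the definition of $\mathcal{Y}$. Establishing this nondegeneracy, equivalently the transversality of the conditions cutting out the fibres of $p$, is where the real work lies; the remaining verifications are formal.
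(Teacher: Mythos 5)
Your parts (1) and (2) are correct and coincide with the paper's own argument: the factorization of $\varphi|_{D}$ through $M\otimes O_{D}$ producing $t_{D}\in H^{0}(L\otimes O_{D})$, and the application of Corollary~\ref{bl resolve nonflatness} to extend the divisor across the blowup (your verification that the rank $d-g+1$ of $f_{*}(M\otimes L)$ matches the codimension of $\mathcal{Z}$ is a useful check the paper leaves implicit). The genuine gap is in part (3), and you flag it yourself: you reduce smoothness to the claim that the induced map $\bar p$ on the $\mathbb{G}_m$-quotient is an open immersion, but you do not prove that claim. Even your pointwise analysis is incomplete as stated: you say the condition $\bar\varphi|_{D}=t_{D}$ pins down $\varphi$ \emph{once $e$ is fixed}, but you never show that $e$ itself is determined. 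Since $H^{1}(L)=0$ (as $\deg L\geq 2g-1$), \emph{every} extension class $e\in H^{1}(M^{-1})$ admits some lift $\varphi$ of $t$, so your description leaves an apparent $H^{1}(M^{-1})$-worth of freedom in the fibre; the step that kills it — showing that compatibility with the given $t_{D}$ forces $e$ to be the image of $t_{D}$ under the connecting map $H^{0}(L\otimes O_{D})\rightarrow H^{1}(L(-D))=H^{1}(M^{-1})$ of $0\rightarrow L(-D)\rightarrow L\rightarrow L\otimes O_{D}\rightarrow 0$ — is exactly the missing idea, and the subsequent tangent-space isomorphism is only conjectured ("should follow from"), not established.

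The paper's proof supplies this missing step and, in doing so, avoids any infinitesimal computation. It compares the two short exact sequences $0\rightarrow M^{-1}\rightarrow E\otimes M^{-1}\rightarrow O_{X}\rightarrow 0$ and $0\rightarrow L(-D)\rightarrow L\rightarrow L\otimes O_{D}\rightarrow 0$ via a commutative diagram, identifying the class of $E$ with $\partial(t_{D})$, and then observes the stronger fact that $E\otimes M^{-1}$ together with $\varphi$ and $s$ is recovered \emph{canonically} as the pullback (fibre product) of $L\rightarrow L\otimes O_{D}\leftarrow O_{X}$ along $t_{D}$. This reconstruction is algebraic and works in families, so it exhibits $p$ as a $\mathbb{G}_m$-torsor over its open image — the torsor direction being precisely your rescaling of $\sigma$, equivalently the choice of isomorphism $M\simeq L^{-1}(D)$ — and smoothness follows at once. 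If you insisted on completing your own route, note that the nondegeneracy you invoke is simply the assertion that $\partial$ induces an isomorphism $H^{0}(L\otimes O_{D})/\mathrm{im}\,H^{0}(L)\xrightarrow{\sim}H^{1}(M^{-1})$, which is immediate from $H^{1}(L)=0$ in the long exact sequence (global generation of $E$ plays no role); but you would still need to carry out the reduction of the differential of $\bar p$ to this map, which your proposal only gestures at, whereas the fibre-product construction renders the whole issue moot.
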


\begin{proof}
For $(1)$, consider:
$$\xymatrix{
X\times\mathcal{Y} \ar[d]^{f}\\
\mathcal{Y}\\
}
$$
By Lemma~\ref{definition of Z}, we have a morphism $O_{\mathcal{Y}}\rightarrow f_{*}(M\otimes L)$ on $\mathcal{Y}$ which is nonvanishing over $\mathcal{Y}\setminus\mathcal{Z}$, hence we get a nonvanishing global section $t$ of $M\otimes L$ on $X\times(\mathcal{Y}\setminus\mathcal{Z})$. So if we denote the vanishing locus of $t$ by $D$, then $D$ is a closed substack of $X\times(\mathcal{Y}\setminus\mathcal{Z})$ and it is a family of finite subscheme of length $d$ of $X$ over $\mathcal{Y}\setminus\mathcal{Z}$. Notice that $t$ is given by the composition:
$$O_{X}\xrightarrow{s} E\rightarrow M\otimes L$$
If we restrict the above morphisms of vector bundles to $D$, we see that the composition:
$$O_{D}\xrightarrow{s_{D}} E\otimes O_{D}\rightarrow M\otimes L\otimes O_{D}$$
is equal to $0$ by the definition of $D$. Since we have the exact sequence of vector bundles:
$$0\rightarrow O_{X}\rightarrow E\rightarrow M\rightarrow 0$$
So we get:
$$0\rightarrow O_{D}\rightarrow E\otimes O_{D}\rightarrow M\otimes O_{D}\rightarrow 0$$
From this we see that the morphism:
$$E\otimes O_{D}\rightarrow M\otimes L\otimes O_{D}$$
factors through:
$$E\otimes O_{D}\rightarrow M\otimes O_{D}\rightarrow M\otimes L\otimes O_{D}$$
This gives a section $t_{D}\in H^{0}(L\otimes O_{D})$, and we embed $D$ into $S$ via $t_{D}$. So this defines a morphism $\mathcal{Y}\setminus\mathcal{Z}\rightarrow \HS^{d}$\\
For $(2)$, consider:
$$\xymatrix{
X\times\mathcal{B} \ar[r]^{\pi'} \ar[d]^{f'} & X\times\mathcal{Y} \ar[d]^{f}\\
\mathcal{B} \ar[r]^{\pi} & \mathcal{Y}\\
}
$$
Now apply Corollary~\ref{bl resolve nonflatness}, we see that the section:
$$O\rightarrow M\otimes L$$
extends to:
$$O(E)\rightarrow \pi'^{*}(M\otimes L)$$
on $X\times\mathcal{B}$, and its vanishing locus $D'$ defines a family of finite subscheme of length $d$ of $X$ over $\mathcal{B}$. Now since $D'$ is a subscheme of the pullback of $D$ to $X\times\mathcal{B}$, so the section $t_{D}\in H^{0}(L\otimes O_{D})$ induces a section of $H^{0}(L\otimes O_{D'})$, and we embed $D'$ into $S$ using this section. This gives a morphism $\mathcal{B}\rightarrow \HS^{d}$ which extends the morphism $\mathcal{Y}\rightarrow \HS^{d}$ in $(1)$.\\
For $(3)$, we claim that the fibers of $\mathcal{Y}\setminus\mathcal{Z}\rightarrow \HS^{d}$ is a $G_{m}$ torsor over its image. In fact, let $D$ be vanishing locus of the section $t$ of $M\otimes L$ as in $(1)$. So $M\otimes L\simeq O_{X}(D)$. By the definition of $\mathcal{Y}$, we have morphisms
$$E\rightarrow M\otimes L \qquad 0\rightarrow O_{X}\rightarrow E\rightarrow M\rightarrow 0$$
on $X\times\mathcal{Y}$. They induces
$$E\otimes M^{-1}\rightarrow L \qquad 0\rightarrow M^{-1}\rightarrow E\otimes M^{-1}\rightarrow O_{X}\rightarrow 0$$
By the definition of $D$, the composition
$$M^{-1}\rightarrow E\otimes M^{-1}\rightarrow L\rightarrow L\otimes O_{D}$$
is zero. Hence
$$E\otimes M^{-1}\rightarrow L\rightarrow L\otimes O_{D}$$
factors as
$$E\otimes M^{-1}\rightarrow O_{X}\rightarrow L\otimes O_{D}$$
Since
\begin{equation}\label{exact sequence}
0\rightarrow M^{-1}\rightarrow E\otimes M^{-1}\rightarrow O_{X}\rightarrow 0
\end{equation}
is an exact sequence, so we get the following morphism of exact sequences:
\begin{equation}\label{commu}
\xymatrix{
0 \ar[r] & M^{-1} \ar[r] \ar[d] & E\otimes M^{-1} \ar[r] \ar[d] & O_{X} \ar[r] \ar[d] & 0\\
0 \ar[r] & L(-D) \ar[r] & L \ar[r] & L\otimes O_{D} \ar[r] & 0\\
}
\end{equation}
The exact sequence~\ref{exact sequence} induces morphism $H^{0}(O_{X})\rightarrow H^{1}(M^{-1})$. $E$ gives a class in $\textrm{Ext}^{1}(O_{X},M^{-1})=H^{1}(M^{-1})$ which can be identified with the image of $1\in H^{0}(O_{X})$ in $H^{1}(M^{-1})$. Because of the commutative diagram~\ref{commu}, it can also be identified with the image of $t_{D}\in H^{0}(L\otimes O_{D})$ under the morphism
$$H^{0}(L\otimes O_{D})\rightarrow H^{1}(L(-D))=H^{1}(M^{-1})$$
since $M\otimes L\simeq O_{X}(D)$. Hence given any point $(D,t_{D})\in V$ which lies in the image of $\mathcal{Y}\setminus\mathcal{Z}$, if we choose an isomorphism $M\simeq L^{-1}(D)$, we can recover the corresponding point $(E,\varphi,s,\sigma)$ of $\mathcal{Y}$ by pulling back the exact sequence via $t_{D}$:
$$\xymatrix{
0 \ar[r] & L(-D)\simeq M^{-1} \ar[d] \ar[r] & E\otimes L(-D)\simeq E\otimes M^{-1} \ar[d] \ar[r] & O_{X} \ar[d]^{t_{D}} \ar[r] & 0\\
0\ar[r] & L(-D) \ar[r] &  L \ar[r] & L\otimes O_{D} \ar[r] & 0\\
}
$$
Hence the assertion follows from this.\\

\end{proof}

In the next lemma we will give a description about the preimage of $\textrm{Hilb}^{d}_{C}\subseteq\HS^{d}$ under the morphism $p$ and some dimension estimates:

\begin{lemma}\label{further props of B}
\begin{enumerate1}
\item Fix a spectral curve $C$. Consider
$$\mathcal{Y}\backslash\mathcal{Z}\xrightarrow{p}\HS^{d}$$
Then $p^{-1}(\textrm{Hilb}_{C}^{d})\cap(\mathcal{Y}\backslash\mathcal{Z})=\hig'^{m}_{C}$ where $\hig'^{m}_{C}$ stands for the closed substack of $\hig'^{m}$ consists of Higgs bundles with spectral curve $C$.
\item Consider the following diagram
$$\xymatrix{
\mathcal{B}\times\hig \ar[d]^{\pi} \ar[r]^{p} & \HS^{d}\times\hig\\
\mathcal{Y}\times\hig\\
}
$$
The intersection of $\mathcal{Z}\times\hig$ with $\pi(p^{-1}(\textrm{Hilb}^{d}_{\mathcal{C}|H}\times_{H}\hig))$ has dimension less than or equals to $m+3l+2g-1$.

\end{enumerate1}
\end{lemma}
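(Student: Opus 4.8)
The two parts call for rather different arguments: part (1) is an identification of substacks via the eigenvalue structure of the Higgs field along the divisor $D$, whereas part (2) is a dimension count that exploits the numerical hypothesis $m>l$.

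For (1) the plan is to compare the two closed substacks of $\mathcal{Y}\setminus\mathcal{Z}$ through a local computation of the lifted Higgs field along $D$. First I would record the compatibility of $p$ with the Hitchin map: if $(E,\phi,s,\sigma)\in\hig^{\prime m}$ lies in $\mathcal{Y}\setminus\mathcal{Z}$, equivalently $\phi(s)\notin\langle s\rangle\otimes L$, then in a local frame $(s_{D},\tilde m)$ of $E\otimes O_{D}$ adapted to $0\to O_{D}\to E\otimes O_{D}\to M\otimes O_{D}\to 0$ the vanishing $\varphi(s)|_{D}=0$ forces the matrix of $\phi|_{D}$ to be upper triangular with diagonal entries $(\alpha,t_{D})$; hence $t_{D}$ is an eigenvalue of $\phi|_{D}$, so the image of $D$ under the embedding by $t_{D}$ lies on the spectral curve $C$ of $(E,\phi)$. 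This gives $\hig^{\prime m}_{C}\cap(\mathcal{Y}\setminus\mathcal{Z})\subseteq p^{-1}(\textrm{Hilb}^{d}_{C})$. For the reverse inclusion I would use that, since $H^{0}(\check E\otimes L)=0$ on $\mathcal{X}$, a lift of $\varphi\in\textrm{Hom}(E,M\otimes L)$ to a genuine Higgs field $\phi\in\textrm{Hom}(E,E\otimes L)$ is unique if it exists, so it suffices to produce the lift and identify its characteristic data. Given $D\subseteq C$, the relation $t_{D}^{2}-\textrm{tr}|_{D}\,t_{D}+\textrm{det}|_{D}=0$ endows $O_{D}$ with the $O_{C}$-module structure needed to check that the obstruction of $\varphi$ in $H^{1}(\check E\otimes L)$ (Lemma~\ref{elementary properties of Y}) vanishes and that the resulting $\phi$ has characteristic polynomial defining $C$. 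As a consistency check, both $p^{-1}(\HuC)\cap(\mathcal{Y}\setminus\mathcal{Z})$ and $\hig^{\prime m}$ have dimension $4l+m-2g+3$ (using that $p$ is smooth with one-dimensional fibres by Proposition~\ref{resolve the rational map} and $\dim\HuC=\dim H+d$ by Proposition~\ref{properties of Hitchin fibration}), so the closed inclusion of substacks of equal pure dimension is an equality; intersecting with the Hitchin fibre over $[C]$ yields (1).

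For (2) the key observation is that the intersection with $\mathcal{Z}\times\hig$ only sees the exceptional divisor $\mathcal{E}=\pi^{-1}(\mathcal{Z})$, since $\pi$ is an isomorphism over $\mathcal{Y}\setminus\mathcal{Z}$; thus
\[(\mathcal{Z}\times\hig)\cap\pi\big(p^{-1}(\HuC\times_{H}\hig)\big)=\pi\big((\mathcal{E}\times\hig)\cap p^{-1}(\HuC\times_{H}\hig)\big).\]
Over a point $z=(E,\varphi,s,\sigma)\in\mathcal{Z}$ the section $\varphi(s)$ vanishes, so $\varphi$ factors as $E\to M\xrightarrow{a}M\otimes L$ for a section $a\in H^{0}(L)$ (this is the identification $\mathcal{Z}\cong g_{*}(L)$ of Lemma~\ref{definition of Z}), and the fibre $\mathcal{E}_{z}=\mathbf{P}(H^{0}(M\otimes L))$ records the blown-up direction $v$. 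Using the extended section of Corollary~\ref{bl resolve nonflatness}, $p|_{\mathcal{E}}$ sends $(z,v)$ to the divisor $D_{v}\in|M\otimes L|$ of $v$, embedded into $S$ by $a|_{D_{v}}$. The crucial point is then numerical: the condition $p(z,v)\in\textrm{Hilb}^{d}_{C}$ for $C=\{\lambda^{2}-a_{C}\lambda+b_{C}=0\}$ translates into $w|_{D_{v}}=0$ with $w:=a^{2}-a_{C}a+b_{C}\in H^{0}(L^{2})$; since $\deg D_{v}=d=l+m$ while $\deg Z(w)=2l$, the hypothesis $m>l$ forces $w=0$ as soon as a single such $v$ exists. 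Hence $(a_{C},b_{C})=(a+a',a\,a')$ for some $a'\in H^{0}(L)$, i.e. $C$ must contain the graph $\Gamma_{a}$. Forgetting $v$, the image is therefore contained in
\[\big\{(z,\mathcal{F})\in\mathcal{Z}\times\hig:\ a(z)^{2}-\textrm{tr}(\mathcal{F})\,a(z)+\textrm{det}(\mathcal{F})=0\ \text{in}\ H^{0}(L^{2})\big\}.\]
Fibring this over $\mathcal{Z}$ (dimension $m+l+g$): for fixed $z$ the value $(\textrm{tr}(\mathcal{F}),\textrm{det}(\mathcal{F}))=(a+a',a\,a')$ ranges over an $H^{0}(L)$ of dimension $l-g+1$, and over each such point the Hitchin fibre has dimension $l+2(g-1)$ by Proposition~\ref{properties of Hitchin fibration}, giving
\[(m+l+g)+\big((l-g+1)+(l+2g-2)\big)=m+3l+2g-1,\]
as claimed, since $\pi$ does not raise dimension.

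The main obstacle is the precise identification of $p|_{\mathcal{E}}$ in part (2): one must verify, from the blowup description in Proposition~\ref{resolve the rational map} and the extended section in Corollary~\ref{bl resolve nonflatness}, that over $\mathcal{Z}$ the eigenvalue used to embed $D_{v}$ into $S$ is exactly $a|_{D_{v}}$ and not some other section of $L$. Once this is established the degree inequality $l+m>2l$ does all the work, collapsing the contribution of $\mathcal{E}$ onto the locus of reducible spectral curves $\Gamma_{a}\cup\Gamma_{a'}$ and producing the stated bound exactly. In part (1) the analogous but easier technical point is the existence and uniqueness of the Higgs lift, which parallels the treatment for $\mathbf{P}^{1}$ in the earlier work.
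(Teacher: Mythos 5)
Your part (2) is correct and is essentially the paper's own argument: over a point $z\in\mathcal{Z}$ the map $\varphi$ factors through $M\xrightarrow{a}M\otimes L$ (this is the identification of $\mathcal{Z}$ with the total space of $g_{*}(L)$ in Lemma~\ref{definition of Z}), so any point of the exceptional fibre maps to a length-$d$ subscheme of $X$ embedded into $S$ via $a$; the degree inequality $d=l+m>2l$ then forces the graph of $a$ to be a component of the spectral curve, and the fibre of the intersection over $\mathcal{Z}$ is bounded by $(l+1-g)+(l+2g-2)=2l+g-1$, which added to $\dim\mathcal{Z}=m+l+g$ gives the stated bound. The point you single out as the main obstacle (that the embedding section over $\mathcal{Z}$ is $a|_{D_{v}}$ and nothing else) is exactly what the paper reads off from Lemma~\ref{definition of Z} and the construction in Proposition~\ref{resolve the rational map}, and your sketch of it is sound.

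Part (1), however, has a genuine gap in the reverse inclusion. Your primary route, producing a Higgs lift of $\varphi$ and invoking uniqueness from $H^{0}(\check{E}\otimes L)=0$, is the right strategy, but you never actually produce the lift: asserting that the relation $t_{D}^{2}-\textrm{tr}|_{D}t_{D}+\textrm{det}|_{D}=0$ ``endows $O_{D}$ with the $O_{C}$-module structure needed to check that the obstruction vanishes'' is a claim, not an argument -- an $O_{C}$-module structure on $O_{D}$ does not by itself kill a class in $H^{1}(\check{E}\otimes L)$. The mechanism in the paper is different and concrete: from $(D,t_{D})$ one forms $0\to K\to O_{X}\oplus L^{-1}\to O_{D}\to 0$ and reconstructs the preimage as $E=K(D)$, $M=L^{-1}(D)$, with $E\to M\otimes L$ induced by the projection $O_{X}\oplus L^{-1}\to O_{X}$; when $D\subseteq C$ the sheaf $K$ is (the pushforward of) an ideal sheaf on $C$, hence an $O_{C}$-module, and its Higgs field $K\xrightarrow{\phi}K\otimes L$ is checked to induce the given $E\to M\otimes L$. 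It is the $O_{C}$-structure on the ideal sheaf $K=I_{D}$, not on $O_{D}$, that furnishes the lift. Your fallback dimension count does not repair this. First, $\dim\HuC=\dim H+d$ is not contained in Proposition~\ref{properties of Hitchin fibration} (which concerns $\hig\to H$, not Hilbert schemes); it requires precisely the delicate fact that $\textrm{Hilb}^{d}$ of a possibly non-reduced planar spectral curve has dimension at most $d$ (e.g.\ via Brian\c{c}on's bound on punctual Hilbert schemes), which is the sensitive point of the whole paper. Second, even granting equal pure dimensions, a closed inclusion of equidimensional substacks is an equality only when the ambient one is irreducible; otherwise $\hig'^{m}\cap(\mathcal{Y}\backslash\mathcal{Z})$ could be a proper union of components of $p^{-1}(\HuC)\cap(\mathcal{Y}\backslash\mathcal{Z})$, and ruling out extra components of the latter over the non-reduced locus is again the same unaddressed issue. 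So the conclusion of part (1) does not follow from what you have written.
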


\begin{proof}
For $(1)$, first notice that we always have $p(\hig'^{m}_{C})\subseteq\textrm{Hilb}^{d}_{C}$ by the definition of $p$. To prove the reverse inclusion, take a point $(D,t_{D})\in V$ where $D$ is a degree $d$ divisor on $X$ and $t_{D}\in H^{0}(L\otimes O_{D})$. Then we can recover its preimage in the following way. Because $D$ is a closed subscheme of $S$, we have an exact sequence:
$$0\rightarrow K\rightarrow O_{X}\oplus L^{-1}\rightarrow O_{D}\rightarrow 0$$
$K$ sits inside the exact sequence:
$$0\rightarrow O_{X}(-D)\rightarrow K\rightarrow L^{-1}\rightarrow 0$$
We can recover the preimage of $(D,t_{D})$ by setting $M=L^{-1}(D)$, $E=K(D)$, and $E\rightarrow M\otimes L$ corresponds to
$$K\rightarrow O_{X}\oplus L^{-1}\rightarrow O_{X}$$
where the last arrow is the projection onto the first factor. Hence if $(D,t_{D})$ is a point on $\textrm{Hilb}^{d}_{C}$, then $O_{C}=O_{X}\oplus L^{-1}$, and we have a morphism of exact sequences:
$$\xymatrix{
o\ar[r] & K \ar[r] \ar[d]^{\phi} & O_{X}\oplus L^{-1} \ar[r] \ar[d] & O_{D} \ar[r] \ar[d] & 0\\
0\ar[r] & K\otimes L \ar[r] & L\oplus O_{X} \ar[r] & L_{D} \ar[r] & 0\\
}
$$
Hence $K\rightarrow O_{X}\oplus L^{-1}\rightarrow O_{X}$ can be identified with
$$K\xrightarrow{\phi}K\otimes L\rightarrow L\oplus O_{X}\rightarrow O_{X}$$
Hence the corresponding $E\rightarrow M\otimes L$ comes from $E\rightarrow E\otimes L$, this proves that $p^{-1}(D,t_{D})\in\hig'^{m}_{C}$.\\
For $(2)$, denote the intersection
$$\mathcal{Z}\times\hig\cap \pi(p^{-1}(\textrm{Hilb}^{d}_{\mathcal{C}|H}\times_{H}\hig))$$
by $\mathcal{W}$. Let $(z,F)$ be a point in $\mathcal{W}$, $b$ a point in $\mathcal{B}$ lying over $z$ such that $p(b,F)\in\textrm{Hilb}^{d}_{\mathcal{C}|H}\times_{H}\hig$. From Lemma~\ref{definition of Z} and the proof of part $(2)$ of Proposition~\ref{resolve the rational map}, we see that each point $z\in\mathcal{Z}$ determines a global section $t\in H^{0}(L)$. Moreover, the image of $b$ in $V$ consists of a finite subscheme $D$ of length $d$ of $X$, and $t$ induces a section in $H^{0}(L\otimes O_{D})$ which gives $D$ the structure of a closed subscheme of $S$. Hence $D$ lies inside the image of the section $X\xrightarrow{t} S$. Let $C$ be the spectral curve of $F$. Because $p(b,F)$ lies in $\textrm{Hilb}^{d}_{\mathcal{C}|H}\times_{H}\hig$, we see that $D$ is also a closed subscheme of $C$, hence $D\in C\cap X$ where $X$ is viewed as a curve in $S$ via the section $t$. If $X$ is not a component of $C$, then it is easy to see $X\cap C$ only has length $2l$, but $D$ has length $d=m+l>2l$ since we require that $m>l$ at the beginning of this subsection. Hence we must have that $X$ is a component of $C$. This again implies that the fibers of the projection $\mathcal{W}\rightarrow\mathcal{Z}$ has dimension at most $l+1-g+l+2(g-1)=2l+g-1$. Now the assertion follows from Lemma~\ref{definition of Z}\\

\end{proof}
For the purpose of the section $4$ and $5$, let us also note the following lemma:
\begin{lemma}\label{factorization of tau}
\begin{enumerate1}
\item There exists a smooth morphism $\mathcal{Y}\rightarrow \Pd$ and a regular embedding $$\mathcal{B}\hookrightarrow\mathcal{Y}\times_{\Pd}X^{(d)}$$.
\item Consider $(\hig'^{m}\backslash\mathcal{Z})\subseteq(\mathcal{Y}\backslash\mathcal{Z})$. We have the following commutative diagram
$$\xymatrix{
\hig'^{m}\backslash\mathcal{Z} \ar[r]^{p} \ar[d]^{u} & \textrm{Hilb}^{d}_{\mathcal{C}|H} \ar[d]^{v}\\
\hig \ar[r]^{\tau} & \hig
}
$$
where $u$ is the natural projection, $v$ sends $D$ to $\check{I}_{D}=\Hom(I_{D},O_{C})$ and $\tau$ is the involution of $\hig$ given by
$$E\rightarrow \check{E}\otimes\textrm{det}(E)$$
where the Higgs field on $\check{E}\otimes\textrm{det}(E)$ is induced from $E$.
\item Consider the following morphism
$$\hig\xrightarrow{l}\hig^{reg}\qquad (E,\phi)\rightarrow \lambda^{*}(\de(E))$$
where $\lambda$ is the projection from the spectral curve $C$ to $X$. Then the involution $\tau$ factors as:
$$\hig\xrightarrow{(\lambda,\textrm{id})}\hig^{reg}\times_{H}\hig\xrightarrow{\textrm{id}\times (\mu_{L}\circ\nu)}\hig^{reg}\times_{H}\hig\xrightarrow{\mu}\hig$$
where $\nu$, $\mu$ and $\mu_{L}$ are defined in Lemma~\ref{equivariance property}.
\end{enumerate1}
\end{lemma}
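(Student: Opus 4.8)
The plan for $(1)$ is to realise $\mathcal{Y}\to\Pd$ as the composite of the vector bundle projection $\mathcal{Y}\to\mathcal{X}$ of Lemma~\ref{elementary properties of Y} with the morphism $\mathcal{X}\to\Pd$ sending $(E,s,\sigma)$ to the degree $d$ line bundle $M\otimes L$, trivialised at $x_{0}$ by means of $\sigma$ together with a fixed trivialisation of $L_{x_{0}}$. This second morphism factors through $M\mapsto M\otimes L$ over $\textrm{Pic}^{m}$, and its fibres over a fixed $M$ are open subschemes of the affine space $\textrm{Ext}^{1}(M,O_{X})$ carrying no automorphisms (since $H^{0}(M^{-1})=0$ as $m>0$); hence $\mathcal{Y}\to\Pd$ is smooth. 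Under this morphism $M\otimes L$ is the pullback of the universal bundle $\mathcal{L}$ on $X\times\Pd$, so by cohomology and base change ($\deg(M\otimes L)=d\geq 2g-1$ forces $R^{1}f_{*}=0$ and commutation with base change) $f_{*}(M\otimes L)$ is the pullback of $q_{*}(\mathcal{L})$. By Lemma~\ref{properties of abel-jacobian map}$(2)$ this gives $\mathbf{P}(f_{*}(M\otimes L))\cong\mathcal{Y}\times_{\Pd}X^{(d)}$. Since $\mathcal{Z}$ is the zero locus of a section of the rank $d+1-g$ bundle $f_{*}(M\otimes L)$ and has codimension $d+1-g$ (Lemma~\ref{definition of Z}), Proposition~\ref{blowup as regular embedding} realises $\mathcal{B}=Bl_{\mathcal{Z}}\mathcal{Y}$ as a regular embedding in $\mathbf{P}(f_{*}(M\otimes L))=\mathcal{Y}\times_{\Pd}X^{(d)}$, as claimed.

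For $(2)$ I would compute both compositions on the universal family and exhibit a canonical isomorphism of the resulting sheaves on the spectral curve. Write $\mathcal{F}$ for the spectral sheaf of $(E,\phi)$, so that $E=\lambda_{*}\mathcal{F}$. The recovery description in the proof of Lemma~\ref{further props of B}$(1)$ identifies $\mathcal{F}$, as an $O_{C}$-module, with $I_{D}\otimes\lambda^{*}(M\otimes L)$, where $I_{D}\subseteq O_{C}$ is the ideal of the length $d$ subscheme $D=p(E,\phi,s,\sigma)$ of $C$. Dualising over $O_{C}$ yields $\check I_{D}=\Hom_{O_{C}}(\mathcal{F},O_{C})\otimes\lambda^{*}(M\otimes L)$, and Lemma~\ref{description of the dual} identifies $\Hom_{O_{C}}(\mathcal{F},O_{C})$ with $\check E\otimes L^{-1}$ as a Higgs bundle. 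Since tensoring by the pullback of a line bundle from $X$ corresponds to tensoring the Higgs bundle by that line bundle, and $\de(E)\cong M$ by the extension $0\to O_{X}\to E\to M\to 0$, this gives $v(p(E,\phi,s,\sigma))=\check I_{D}\cong(\check E\otimes L^{-1})\otimes(M\otimes L)=\check E\otimes\de(E)=\tau(u(E,\phi,s,\sigma))$. All the identifications are canonical and compatible with base change, so they assemble into the required commutativity.

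For $(3)$ it remains to unwind the maps $\mu,\nu,\mu_{L}$ of Lemma~\ref{equivariance property}, with $\mu_{L}$ taken to be tensoring by $\lambda^{*}L$, and to check the factorisation at the level of sheaves on $C$. The first arrow sends $(E,\phi)$ to $(\lambda^{*}\de(E),(E,\phi))$, which lands in $\hig^{reg}\times_{H}\hig$ because $\lambda^{*}\de(E)$ is a line bundle on $C$. Applying $\mu_{L}\circ\nu$ to the second factor produces $\Hom_{O_{C}}(\mathcal{F},O_{C})\otimes\lambda^{*}L$, after which $\mu$ tensors by the first factor $\lambda^{*}\de(E)$, giving $\Hom_{O_{C}}(\mathcal{F},O_{C})\otimes\lambda^{*}(L\otimes\de(E))$. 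By Lemma~\ref{description of the dual} this is $(\check E\otimes L^{-1})\otimes(L\otimes\de(E))=\check E\otimes\de(E)$, which is precisely $\tau(E,\phi)$; hence $\tau$ factors as stated.

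The main obstacle is the sheaf-theoretic identity underlying $(2)$: checking that the recovery construction of Lemma~\ref{further props of B}$(1)$ really identifies the spectral sheaf $\mathcal{F}$ with $I_{D}\otimes\lambda^{*}(M\otimes L)$ compatibly with the $O_{C}$-module (Higgs) structure, and that the resulting isomorphism is natural in families rather than merely fibrewise. Once this identity is established, $(2)$ and $(3)$ become parallel computations with Lemma~\ref{description of the dual}, and $(1)$ follows formally from Proposition~\ref{blowup as regular embedding}.
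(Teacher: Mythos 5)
Your proposal is correct and takes essentially the same route as the paper: for (1) the same morphism $\mathcal{Y}\rightarrow\Pd$ sending $(E,\varphi,s,\sigma)$ to $M\otimes L$, the identification of $\mathcal{Y}\times_{\Pd}X^{(d)}$ with the projective bundle of $f_{*}(M\otimes L)$, and the appeal to Lemma~\ref{definition of Z} plus Proposition~\ref{blowup as regular embedding}; for (2) the same recovery identification from the proof of Lemma~\ref{further props of B} (the paper phrases it as $K=I_{D}\simeq E\otimes M^{-1}\otimes L^{-1}$, equivalent to your $\mathcal{F}\simeq I_{D}\otimes\lambda^{*}(M\otimes L)$) combined with Lemma~\ref{description of the dual} and $\de(E)\simeq M$; and for (3) the same unwinding of $\mu$, $\nu$, $\mu_{L}$ via Lemma~\ref{description of the dual}. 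You simply fill in details the paper leaves implicit (the smoothness of $\mathcal{Y}\rightarrow\Pd$ and the explicit dualization), so the two arguments coincide.
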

\begin{proof}
For part $(1)$, the morphism $\mathcal{Y}\rightarrow \Pd$ is given by:
$$(E,\varphi,s,\sigma)\rightarrow M\otimes L$$
It is easy to see that this morphism is smooth. Now consider the following diagram:
$$\xymatrix{
X\times\mathcal{Y}\ar[r] \ar[d]^{f} & X\times\Pd \ar[d]^{h}\\
\mathcal{Y} \ar[r]^{p} & \Pd\\
}
$$
Let $\mathcal{N}$ be the universal line bundle on $X\times\Pd$. $X^{(d)}$ is a projective bundle over $\Pd$ determined by the vector bundle $h_{*}(\mathcal{N})$, so $\mathcal{Y}\times_{\Pd}X^{(d)}$ is a projective bundle over $\mathcal{Y}$ corresponds to the vector bundle $f_{*}(M\otimes L)$. From Lemma~\ref{definition of Z}, $\mathcal{Z}$ is defined by the vanishing locus of a section of the vector bundle $f_{*}(M\otimes L)$, hence the assertion follows from Proposition~\ref{blowup as regular embedding}.\\
For part $(2)$, notice that part $(1)$ of Lemma~\ref{further props of B} implies that the restriction of $p$ to $\hig'^{m}\backslash\mathcal{Z}$ sends $\hig'^{m}\backslash\mathcal{Z}$ to $\textrm{Hilb}^{d}_{\mathcal{C}|H}$. Moreover, from the proof of part $(1)$ of Lemma~\ref{further props of B} we see that if
$$0\rightarrow K=I_{D}\rightarrow O_{X}\oplus L^{-1}\rightarrow O_{D}\rightarrow 0$$
is the image of $(E,\phi,s,\sigma)\in\hig'^{m}$, then we have
$$K\simeq E\otimes M^{-1}\otimes L^{-1}$$
It is easy to see that $\check{I}_{D}\simeq K^{-1}\otimes L^{-1}$, hence the assertion follows from this, using the fact that $\det(E)\simeq M$.\\
Part $3$ follows immediately from the definitions, using Lemma~\ref{description of the dual}.
\end{proof}

\section{A cohomological vanishing result}
In this section we keep the same notation and assumption as in subsection $3.3$. Set $d=m+l$ where the assumptions on $l$ and $m$ are given in the second paragraph of subsection $3.3$. The main result of this section is Lemma~\ref{the main lemma}. First let us define a certain open substack of $\hig^{n}$.
\begin{construction}\label{certain open substack}
Let $\hig^{(n)}$ denote the open substack of $\hig^{n}$ such that the underlying rank two vector bundle $E'$ satisfies the following conditions:
\begin{enumerate1}
\item All line subbudles of $E'$ has degree less than or equals to $-m-g+1$
\item All quotient line bundles of $E'$ has degree greater than or equals to $g-d=g-l-m$
\end{enumerate1}
\end{construction}
From the definition of semistability of vector bundles, it is easy to check that if we take $n=-2m-2g+2$ or $n=-2m-2g+3$, then all the semistable vector bundle of rank two of degree $n$ satisfies the two conditions above under our assumptions on $l$. Hence we can choose two consecutive integers $n$ for which $\hig^{(n)}$ is nonempty. For convenience, let us also give the following reformulation of the above two conditions in terms of the vanishing of certain cohomology groups:
\begin{lemma}
The two conditions in Construction~\ref{certain open substack} is equivalent to the following:
\begin{enumerate1}
\item $H^{1}(E'\otimes\Omega_{X}((d-g+1)x_{0})\otimes\mathcal{N})=0$ for all degree zero line bundle $\mathcal{N}$ on $X$
\item $H^{1}(\check{E'}\otimes L\otimes O_{X}(-(d-g)x_{0})\otimes\mathcal{N})=0$ for all degree zero line bundle $\mathcal{N}$ on $X$
\end{enumerate1}
\end{lemma}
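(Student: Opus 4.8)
The plan is to reduce both cohomological statements to the nonexistence of sub- or quotient line bundles of a prescribed degree, via Serre duality, and then to match the numerics with the two conditions of Construction~\ref{certain open substack}. I will show that the first vanishing is equivalent to the \emph{second} condition of the construction and the second vanishing to the \emph{first}; the correspondence is crossed.

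First I would apply Serre duality. For the first group, $H^1(E'\otimes\Omega_X((d-g+1)x_0)\otimes\mathcal{N})$ is dual to $H^0(\check{E'}\otimes O_X(-(d-g+1)x_0)\otimes\mathcal{N}^{-1})$, a line bundle of degree $g-1-d$ twisting $\check{E'}$. A nonzero section here is the same datum as a nonzero homomorphism $E'\to\mathcal{L}$ with $\deg\mathcal{L}=g-1-d$; since the image of such a map is a torsion-free rank-one sheaf on the smooth curve $X$, hence a line bundle, it is a quotient line bundle of $E'$ of degree $\le g-1-d$. For the second group, $H^1(\check{E'}\otimes L\otimes O_X(-(d-g)x_0)\otimes\mathcal{N})$ is dual to $H^0(E'\otimes L^{-1}\otimes\Omega_X((d-g)x_0)\otimes\mathcal{N}^{-1})$; writing the twist as $\mathcal{M}$ of degree $m+g-2$, a nonzero section is a nonzero map $\mathcal{M}^{-1}\to E'$ with $\deg\mathcal{M}^{-1}=-m-g+2$, which is automatically injective and whose saturation is a line subbundle of $E'$ of degree $\ge -m-g+2$.

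Next I would handle the quantifier over all degree-zero $\mathcal{N}$. As $\mathcal{N}$ runs over $\textrm{Pic}^{0}(X)$, the twisting bundle runs over all line bundles of the fixed degree $g-1-d$ (respectively $-m-g+2$). The key elementary observation is that $E'$ admits a quotient line bundle of degree $\le g-1-d$ if and only if it admits one that embeds into some line bundle of degree exactly $g-1-d$: given a quotient $\mathcal{Q}$ of degree $q\le g-1-d$, the bundle $\mathcal{Q}(D)$ for an effective divisor $D$ of degree $g-1-d-q$ has the right degree and receives $\mathcal{Q}$, and every line bundle of that degree is $O_X(-(d-g+1)x_0)\otimes\mathcal{N}^{-1}$ for a suitable $\mathcal{N}$. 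The symmetric statement for subbundles (twisting down by an effective divisor and then saturating) handles the second group. Consequently the first vanishing holds for all $\mathcal{N}$ exactly when $E'$ has no quotient line bundle of degree $\le g-1-d$, i.e.\ all quotient line bundles have degree $\ge g-d$; and the second vanishing holds for all $\mathcal{N}$ exactly when $E'$ has no line subbundle of degree $\ge -m-g+2$, i.e.\ all line subbundles have degree $\le -m-g+1$. These are precisely conditions (2) and (1) of Construction~\ref{certain open substack}.

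The only point requiring care — and the main obstacle — is the passage between the uniform vanishing "for all $\mathcal{N}$" and a clean bound on the degree of a single extremal sub/quotient line bundle. This rests on two facts specific to the smooth curve $X$: that a nonzero map from a line bundle is injective and the image of a map to a line bundle is again a line bundle (so rank-one sub/quotients are genuine line bundles), and that one may always adjust degrees by twisting with an effective divisor while realizing every isomorphism class of the boundary degree through an appropriate $\mathcal{N}$. Once these are in place, the degree bookkeeping ($\deg\Omega_X((d-g+1)x_0)=g-1+d$ and the companion computation giving $m+g-2$) matches the two vanishings to the two geometric conditions, completing the proof.
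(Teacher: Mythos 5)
Your proof is correct, and it supplies exactly the argument the paper leaves implicit: the paper states this lemma as a routine reformulation with no proof at all, and the intended verification is precisely your combination of Serre duality, the observation that rank-one images/saturations on a smooth curve are line bundles, and twisting by effective divisors to realize the extremal degree through a suitable $\mathcal{N}$. Your degree bookkeeping (including the crossed matching of lemma condition (1) with construction condition (2) and vice versa) checks out.
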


The purpose of this section is to prove the following lemma:
\begin{lemma}\label{the main lemma}
Consider:
$$\xymatrix{
\mathcal{B}\times\hig^{(n)} \ar[r]^{p} \ar[d]^{\pi} & \HS^{d}\times\hig^{(n)}\\
\mathcal{Y}\times\hig^{(n)}
}
$$
Then we have:
\begin{enumerate1}
\item $R\pi_{*}(p^{*}(Q))\in D^{\leq 0}_{coh}(\mathcal{Y}\times\hig)$ where $p^{*}$ stands for the derived pullback functor.
\item $\RH(R\pi_{*}(p^{*}(Q)),O)\in D^{\leq d}_{coh}(\mathcal{Y}\times\hig)$.
\end{enumerate1}
Here let us remind the reader that $d=l+m$ is the same as the codimension of $\hig^{'m}\times_{H}\hig^{(n)}$ in $\mathcal{Y}\times\hig^{(n)}$.
\end{lemma}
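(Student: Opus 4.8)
The plan is to prove both bounds by a single Fourier--Mukai computation transported from $\HS^{d}$ down to $\Pd$ through the blowup, deducing part (2) from part (1) by Grothendieck duality; throughout I work relatively over $\hig^{(n)}$. First I would set up the projective-bundle factorization of $\pi$. By Lemma~\ref{factorization of tau}(1) and its proof the blowup factors as $\pi=\bar\pi\circ i$ with $i\colon\mathcal{B}\hookrightarrow\mathbf{P}$ a regular embedding and $\bar\pi\colon\mathbf{P}=\mathbf{P}(f_{*}(M\otimes L))\to\mathcal{Y}$ the projective bundle, where $\mathcal{Z}\subseteq\mathcal{Y}$ is the zero locus of a section of $f_{*}(M\otimes L)$ (Lemma~\ref{definition of Z}); this is exactly the situation of Proposition~\ref{blowup as regular embedding} and Lemma~\ref{pushforward of bl}. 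The crux of part (1) is then to produce a complex $K$ on $\mathbf{P}$ with $Li^{*}K\simeq p^{*}Q$ and with $R\bar\pi_{*}(K\otimes O(a))\in D^{\leq 0}$ for all $a\geq 0$; granting this, Lemma~\ref{pushforward of bl} immediately gives $R\pi_{*}(p^{*}Q)=R\pi_{*}(Li^{*}K)\in D^{\leq 0}$. To build $K$ I would use Lemma~\ref{summand} to replace $Q$ by the sheaf $Q'$ on the isospectral Hilbert scheme and Corollary~\ref{Hilbert scheme of P1 and S} to rewrite $Q'$ over the image $V$ in flag coordinates, where the projection to $S^{d}$ composes with $S^{d}\to X^{d}$ to the natural map $V'\to X^{d}$ and $\de\mathcal{A}|_{V}\simeq r^{*}\de O_{D}$. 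The section $t_{D}$ embedding $D$ into $S$ is precisely the datum blown down by $\pi$, so on $\mathbf{P}$ it is replaced by the tautological $O(-1)$-twisted section (Corollary~\ref{bl resolve nonflatness} governs how it extends across the exceptional divisor); feeding this into the construction of $Q'$ yields the desired $K$, pulled back from $X^{(d)}$ up to the $O(1)$-twist and restricting to $p^{*}Q$ on $\mathcal{B}$.

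With $K$ in hand I would compute $R\bar\pi_{*}(K\otimes O(a))$ by base change: since $\bar\pi$ is the pullback along the smooth map $\mathcal{Y}\to\Pd$ of the projective bundle $X^{(d)}\to\Pd$, and $O(1)_{X^{(d)}\mid\Pd}\simeq O(x_{0})^{(d)}$ (Lemma~\ref{properties of abel-jacobian map}(2)), the sign-isotypic part identifies $R\bar\pi_{*}(K\otimes O(a))$ with a direct summand of $Rf_{*}$ of a complex on $X^{d}$ along $f\colon X^{d}\to\Pd$. At this point I invoke Lemma~\ref{vanishing of pushforward}: checking fibrewise over $\hig^{(n)}$, one must verify that the relevant complex, tensored with $\mathcal{N}^{\boxtimes d}$ for every degree-zero $\mathcal{N}$, has no cohomology in positive degree. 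Unwinding it with the resolution of Lemma~\ref{resolution} and the divisor identities of Lemma~\ref{properties of abel-jacobian map}(3)--(5) converts the accumulated $\Delta_{ij}$- and $\Theta$-twists into the line bundle $\Omega_{X}((d-g+1)x_{0})$, so the obstruction is exactly $H^{1}(E'\otimes\Omega_{X}((d-g+1)x_{0})\otimes\mathcal{N})$, which vanishes by the first defining condition of $\hig^{(n)}$. Cohomology and base change then promote the fibrewise vanishing to the relative bound, proving part (1).

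For part (2) I would apply Grothendieck duality to $\pi$. As $\mathcal{Y}$ and $\mathcal{B}$ are smooth, $\pi$ is birational and $\omega_{\mathcal{B}\mid\mathcal{Y}}\simeq O((d-g)E)$ (Proposition~\ref{dualizing sheaf of bl}, $f_{*}(M\otimes L)$ having rank $d+1-g$), so $\RH(R\pi_{*}p^{*}Q,O)\simeq R\pi_{*}\RH(p^{*}Q,O((d-g)E))$. Since $Q$ is Cohen--Macaulay of codimension $d$ (Proposition~\ref{codimension of supp of CM sheaves}), its dual $\tilde Q=\mathcal{E}xt^{d}(Q,O)$ is again such, and the explicit flat (indeed lci) description of $p$ from Lemma~\ref{further props of B} and Lemma~\ref{factorization of tau} lets me identify $\RH(p^{*}Q,O((d-g)E))$ with $p^{*}\tilde Q\otimes\mathcal{L}_{0}[-d]$ for a line bundle $\mathcal{L}_{0}$; the shift by $[-d]$ is precisely what turns the target $D^{\leq 0}$ into $D^{\leq d}$. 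Thus part (2) reduces to $R\pi_{*}(p^{*}\tilde Q\otimes\mathcal{L}_{0})\in D^{\leq 0}$, which is formally identical to part (1). By Lemma~\ref{equivariance property}(2) and the factorization of the involution $\tau$ in Lemma~\ref{factorization of tau}(3), $\tilde Q$ is the sheaf $Q$ built from the dual universal sheaf $\check F$, so repeating the computation above replaces the obstruction by $H^{1}(\check{E'}\otimes L\otimes O_{X}(-(d-g)x_{0})\otimes\mathcal{N})$, which vanishes by the second defining condition of $\hig^{(n)}$.

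The main obstacle is the construction of $K$ on $\mathbf{P}$ together with the twist bookkeeping in the Fourier--Mukai step: one must check that the Poincar\'e construction over $V$ depends on the embedding $t_{D}$ only through the tautological $O(1)$, so that $Li^{*}K\simeq p^{*}Q$ and Lemma~\ref{pushforward of bl} genuinely applies, and then match the determinant, theta and $O(1)$ twists exactly against the cohomological conditions cutting out $\hig^{(n)}$. A secondary difficulty is justifying the duality step, in particular that $p^{*}Q$ remains Cohen--Macaulay and that the relative dualizing data of $p$ is as claimed; this is where the flatness and lci properties of $p$ established in subsection~3.3 are indispensable.
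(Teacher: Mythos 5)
Your high-level skeleton does coincide with the paper's: the factorization of $\pi$ through the projective bundle $\mathbf{P}=\mathcal{Y}\times_{\Pd}X^{(d)}$ of Lemma~\ref{factorization of tau}, the reduction from $Q$ to $Q'$ via Lemma~\ref{summand} and Corollary~\ref{Hilbert scheme of P1 and S}, the Fourier--Mukai vanishing of Lemma~\ref{vanishing of pushforward} along $X^{d}\rightarrow\Pd$ checked against the two conditions defining $\hig^{(n)}$, and Grothendieck duality with $\omega_{\mathcal{B}\mid\mathcal{Y}}\simeq O((d-g)E)$ for part (2); the two obstruction groups you name are exactly the ones appearing in the paper. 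The gap is the step you yourself flag as the crux: the existence of an object $K$ on $\mathbf{P}\times\hig^{(n)}$, ``pulled back from $X^{(d)}$ up to the $O(1)$-twist'', with $Li^{*}K\simeq p^{*}Q$. No $K$ of that form can exist, and your sketch for producing one breaks down, because the Poincar\'e construction over $V$ does not depend on the embedding datum $t_{D}$ only through a line bundle. Concretely, $Q'$ is resolved by $\sigma^{*}\bigl((q^{*}(E')\otimes q^{*}(L^{-1})\xrightarrow{T-q^{*}\phi}q^{*}(E'))^{\boxtimes d}\bigr)$ (Lemma~\ref{resolution}): the terms are pulled back from $X^{d}$, but each differential is a sum of two pieces with different twisting behaviour. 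The piece $q^{*}\phi$ is untwisted, while the replacement for $T$ on $\mathbf{P}$ is the tautological section, valued in $(M\otimes L)\boxtimes O(1)$; converting it into the $L$-valued section actually used in the construction is the factorization through $M\otimes O_{D}$ of Proposition~\ref{resolve the rational map}, which requires $(\varphi\circ s)|_{D}=0$ --- true on $X\times\mathcal{B}$ by Corollary~\ref{bl resolve nonflatness}, but with no analogue on $X\times\mathbf{P}$, where the tautological divisor bears no relation to $\varphi\circ s$. Whatever powers of $O(1)$ you place on the terms, exactly one of the two summands of the differential typechecks; this is the geometric fact that the rational map $\mathcal{Y}\dashrightarrow\HS^{d}$ is resolved on $\mathcal{B}$ and does not extend to $\mathbf{P}$. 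A blunter check: for $K$ pulled back from $X^{(d)}\times\hig^{(n)}$ up to a twist, $\textrm{Supp}(Li^{*}K)$ is a preimage from $X^{(d)}\times\hig^{(n)}$, whereas $\textrm{Supp}(p^{*}Q)=p^{-1}(\textrm{Hilb}^{d}_{\mathcal{C}|H}\times_{H}\hig^{(n)})$ depends on the embedding $t_{D}$ and not only on the divisor $D$ (Lemma~\ref{further props of B}), and a twist cannot change a support.

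The paper's proof is engineered around precisely this obstruction: it never lifts the complex to $\mathbf{P}$, only its terms. After pushing along the finite flat cover $\psi'$, the object $\psi'_{*}(w^{*}(Q'))$ is represented on $\mathcal{B}\times\hig^{(n)}$ by a complex $C^{*}$ in degrees $[-d,0]$ whose individual terms $C^{-i}$ are pullbacks of bundles $D^{-i}=\theta^{*}(h_{*}(F^{-i}))$ on $\mathbf{P}\times\hig^{(n)}$ (Lemma~\ref{existence of resolutions}), while the differentials live only on $\mathcal{B}\times\hig^{(n)}$ and are never extended. Lemma~\ref{pushforward of bl} is then applied term by term, giving $R^{j}\pi_{*}(C^{-i})=0$ for $j>i$, and the spectral sequence of Lemma~\ref{spectral sequence}, packaged as Lemma~\ref{baby version of the proof}, assembles these termwise bounds into the assertion $R\pi_{*}\in D^{\leq 0}_{coh}$; your Fourier--Mukai computation is then exactly the paper's final step, theta twist and all. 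Part (2) is handled the same way, dualizing the complex termwise using duality for the finite flat $\psi'$ and Proposition~\ref{dualizing sheaf of bl}; this also sidesteps your secondary claim that $\mathcal{E}xt^{d}(Q,O)$ is again a $Q$-type sheaf attached to $\check{F}$, an autoduality statement for $Q$ that the paper neither proves nor needs (Lemma~\ref{equivariance property} concerns $\mathcal{P}$, not $Q$), and avoids any appeal to flatness of $p$ on all of $\mathcal{B}$, which is likewise not established. If you replace your construction of $K$ by this termwise lifting plus spectral-sequence argument, the rest of your proposal goes through.
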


Since $Q$ is a direct summand of $\psi_{*}(Q')$(Lemma~\ref{summand}), and the image of $\mathcal{B}$ is contained in the open subscheme $V$ of $\HS^{d}$($V$ is defined in Corollary~\ref{Hilbert scheme of P1 and S}) by part $(2)$ of Proposition~\ref{resolve the rational map}, we only need to prove the following:
\begin{lemma}\label{reduction of the main lemma}
Set $\textrm{Flag}^{d}_{\mathcal{B}}=\mathcal{B}\times_{V}V'$. Consider:
$$\xymatrix{
\textrm{Flag}^{d}_{\mathcal{B}}\times\hig^{(n)}\ar[r]^{w} \ar[d]^{\psi'} & V'\times\hig^{(n)} \ar[d]^{\psi}\\
\mathcal{B}\times\hig^{(n)} \ar[r]^{p} \ar[d]^{\pi} & V\times\hig^{(n)}\\
\mathcal{Y}\times\hig^{(n)}
}
$$
where $V$ is the open subscheme of $\HS^{d}$ defined in Corollary~\ref{Hilbert scheme of P1 and S} and $V'$ is the open subscheme of $\textrm{Flag}^{'d}_{S}$ defined in Corollary~\ref{Hilbert scheme of P1 and S}. Then we have
\begin{enumerate1}
\item $R\pi_{*}(\psi'_{*}(w^{*}(Q')))\in D^{\leq 0}_{coh}(\mathcal{Y}\times\hig)$. Here again all functors are derived.
\item $\RH(R\pi_{*}(\psi'_{*}(w^{*}(Q'))),O)\in D^{\leq d}_{coh}(\mathcal{Y}\times\hig)$.
\end{enumerate1}
\end{lemma}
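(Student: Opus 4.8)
The plan is to bound the cohomological amplitude of $R\pi_{*}(\psi'_{*}(w^{*}(Q')))$ and of its derived dual by combining the blowup pushforward estimate of Lemma~\ref{pushforward of bl} with the Fourier--Mukai vanishing of Lemma~\ref{vanishing of pushforward}, after identifying $w^{*}(Q')$ explicitly in terms of the universal Higgs sheaf. First I would reorganize the two-stage pushforward. The upper square is Cartesian and $\psi$ (hence $\psi'$) is finite flat, so derived flat base change gives $\psi'_{*}(w^{*}(Q'))\simeq Lp^{*}(\psi_{*}(Q'))$ and therefore $R\pi_{*}(\psi'_{*}(w^{*}(Q')))\simeq R\pi_{*}(Lp^{*}(\psi_{*}(Q')))$. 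By Lemma~\ref{factorization of tau}(1), $\mathcal{B}$ is the blowup of $\mathcal{Y}$ along $\mathcal{Z}$, embedded as a regular subscheme of the projective bundle $\mathbf{P}(\mathcal{E})=\mathcal{Y}\times_{\Pd}X^{(d)}$ with $\mathcal{E}=f_{*}(M\otimes L)$ of rank $d-g+1$ and relative $O(1)\simeq O(x_{0})^{(d)}$ (Lemma~\ref{properties of abel-jacobian map}(2)).

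The next step is to exhibit a complex $K$ on $\mathbf{P}(\mathcal{E})\times\hig^{(n)}$, built from the universal divisor of $X^{(d)}$, the universal Higgs sheaf, and the determinant twist $\det(\mathcal{A})^{-1}$, whose derived restriction $Li^{*}K$ to $\mathcal{B}\times\hig^{(n)}$ agrees with $Lp^{*}(\psi_{*}(Q'))$. This is where the precise form of $Q'$ in formula~\ref{Q'} and the identifications of Corollary~\ref{Hilbert scheme of P1 and S} enter: over the flag model $\textrm{Flag}^{d}_{\mathcal{B}}=\mathcal{B}\times_{X^{(d)}}X^{d}$ the composite $V'\subseteq\Fl^{' d}\to S^{d}\to X^{d}$ is the projection $r'$ of Corollary~\ref{Hilbert scheme of P1 and S}(3), so $w^{*}(Q')$ is the external tensor product of the universal Higgs sheaf restricted to the $d$ universal points, twisted by $\det(\mathcal{A})^{-1}$. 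Granting this identification, Lemma~\ref{pushforward of bl} reduces part (1) to verifying that the pushforward of $K\otimes O(a)$ to $\mathcal{Y}\times\hig^{(n)}$ lies in $D^{\leq 0}_{coh}$ for every $a\geq 0$.

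For part (1) I would then compute this projective-bundle pushforward through the Fourier--Mukai map $f\colon X^{d}\to\Pd$, which is exactly the setting of Lemma~\ref{vanishing of pushforward}. Tracking twists on $X^{d}$: the factor $\det(\mathcal{A})^{-1}$ pulls back to $O(\sum_{i<j}\Delta_{ij})$ by Corollary~\ref{Hilbert scheme of P1 and S}(4) and Lemma~\ref{properties of abel-jacobian map}(3), the twist $O(a)$ becomes $(O(x_{0})^{(d)})^{\otimes a}$, and the theta twist is controlled by Lemma~\ref{properties of abel-jacobian map}(4). Assembling these, the hypothesis $R^{i}\Gamma(K\otimes\mathcal{N}^{\boxtimes d})=0$ for $i>0$ and all degree-zero $\mathcal{N}$ becomes precisely $H^{1}(E'\otimes\Omega_{X}((d-g+1)x_{0})\otimes\mathcal{N})=0$, the first defining condition of $\hig^{(n)}$; since $a\geq 0$ only increases positivity the vanishing persists, and Lemma~\ref{vanishing of pushforward} yields $R\pi_{*}(\psi'_{*}(w^{*}(Q')))\in D^{\leq 0}_{coh}(\mathcal{Y}\times\hig)$, which is (1).

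For the dual statement (2) I would apply Grothendieck--Serre duality along $\pi$ and $\psi'$. For the blowup $\pi^{!}O\simeq O((d-g)E)$ by Proposition~\ref{dualizing sheaf of bl} (since $\mathcal{E}$ has rank $d-g+1$), and for the finite flat $\psi'$ the relative dualizing sheaf is $O(\sum_{i<j}\Delta_{ij})$ by Lemma~\ref{properties of abel-jacobian map}(5); duality converts $\RH(R\pi_{*}(\psi'_{*}(w^{*}(Q'))),O)$ into $R\pi_{*}$ of the derived dual of $w^{*}(Q')$ twisted by these dualizing sheaves. Because $Q$ is Cohen--Macaulay of codimension $d$, its derived dual is concentrated in cohomological degree $d$, and via the involution $\nu$ of Lemma~\ref{equivariance property} together with Lemma~\ref{factorization of tau} the dualized universal data is governed by $\check{E}$; re-running the first three steps for this dual complex, now using the second defining condition $H^{1}(\check{E'}\otimes L\otimes O_{X}(-(d-g)x_{0})\otimes\mathcal{N})=0$ of $\hig^{(n)}$, produces the bound $D^{\leq d}_{coh}$, the shift by $d$ reflecting the codimension of the support $\hig'^{m}\times_{H}\hig^{(n)}$. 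I expect the main obstacle to be the identification step: producing the complex $K$ on $\mathbf{P}(\mathcal{E})\times\hig^{(n)}$ with $Li^{*}K\simeq Lp^{*}(\psi_{*}(Q'))$ and carrying out the bookkeeping of all line-bundle twists so that the $O(a)$-twists with $a\geq 0$ land inside the Fourier--Mukai vanishing range imposed by $\hig^{(n)}$, and symmetrically on the dual side. Reconciling the sign-isotypic (antisymmetric) structure of $Q$ with the flag model $\textrm{Flag}^{d}_{\mathcal{B}}$ while keeping the determinant twists compatible is the delicate technical core of the argument.
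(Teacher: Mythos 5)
Your high-level strategy coincides with the paper's: embed $\mathcal{B}$ in the projective bundle $\mathbf{P}=\mathcal{Y}\times_{\Pd}X^{(d)}$ via part $(1)$ of Lemma~\ref{factorization of tau}, resolve $Q'$ using the spectral resolution of Lemma~\ref{resolution}, feed the result into Lemma~\ref{pushforward of bl} and the Fourier--Mukai vanishing of Lemma~\ref{vanishing of pushforward} (with the two defining conditions of $\hig^{(n)}$ entering exactly where you say), and treat part $(2)$ by Grothendieck duality with $O((d-g)E)$ and $\omega_{X^{d}|X^{(d)}}$. Your opening base-change step $\psi'_{*}(w^{*}Q')\simeq Lp^{*}(\psi_{*}Q')$ is also fine. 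But the step your whole argument leans on --- and which you yourself flag as the main obstacle without resolving --- is genuinely unavailable: there is no complex $K$ on $\mathbf{P}\times\hig^{(n)}$, built from the universal divisor, the universal Higgs sheaf and the determinant twist, with $Li^{*}K\simeq Lp^{*}(\psi_{*}Q')$. What is true (Lemma~\ref{existence of resolutions}) is that each \emph{term} of the natural complex representing $w^{*}(Q')$ is pulled back from $X^{d}\times\hig^{(n)}$, because $q\circ\sigma=r'$ (part $(3)$ of Corollary~\ref{Hilbert scheme of P1 and S}) kills the $S$-direction. The \emph{differentials} are not pulled back: they are built from $T-q^{*}(\phi)$, where $T$ is the tautological section of $q^{*}L$, i.e.\ precisely the datum of the embedding of the universal length-$d$ subscheme into $S$. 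On $\mathcal{B}$ this datum exists --- it is the section $t_{D'}$ produced by the blowup in part $(2)$ of Proposition~\ref{resolve the rational map}, whose construction needs the factorization coming from the vanishing of $\varphi\circ s$ along $D$ --- whereas on $\mathbf{P}$ the divisor $D$ is an arbitrary member of its linear system and carries no such section. So only the terms, not the complex, descend to $\mathbf{P}$, and ``granting this identification'' grants something your recipe cannot produce.

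The missing idea, which is how the paper's proof actually closes this gap, is that the term-wise statement suffices once combined with a hypercohomology spectral sequence. One records that each term $C^{-i}$ of the complex on $\mathcal{B}\times\hig^{(n)}$ is the plain pullback of the bundle $D^{-i}=\theta^{*}(h_{*}(F^{-i}))$ on $\mathbf{P}\times\hig^{(n)}$, applies Lemma~\ref{pushforward of bl} to each term \emph{separately} to get $R^{j}\pi_{*}(C^{-i})=0$ for $j>i$ (this is where your Fourier--Mukai bookkeeping with $\Theta$, $O(x_{0})^{(d)}$, $O(\sum_{i<j}\Delta_{ij})$ and the first condition on $\hig^{(n)}$ goes through verbatim), and then invokes Lemma~\ref{spectral sequence} --- the combination is packaged as Lemma~\ref{baby version of the proof} --- to conclude that $R\pi_{*}$ of the whole complex lies in $D^{\leq 0}_{coh}$; the differentials never need to exist on $\mathbf{P}$. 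Part $(2)$ is handled the same way: dualize the locally free complex term by term against the relative dualizing sheaves of $\psi'$ and of $\pi$ (Proposition~\ref{dualizing sheaf of bl}), obtaining a complex in degrees $[0,d]$ whose terms are again pullbacks, and rerun the vanishing with the second condition on $\hig^{(n)}$; there is no need for (and no clean way to use) the Cohen--Macaulayness of $Q$ or the involution $\nu$ at this point. With the global identification replaced by the term-wise statement plus Lemma~\ref{spectral sequence}, your proposal becomes the paper's proof.
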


The proof of the previous lemma relies on the following:
\begin{lemma}\label{existence of resolutions}
\begin{enumerate1}
\item There exists a Cartesian diagram:
$$\xymatrix{
\FB \ar[r] \ar[d] & X^{d} \ar[d]\\
\mathcal{B} \ar[r] & X^{(d)}\\
}
$$
and a closed embedding $\FB\hookrightarrow\mathcal{Y}\times_{\Pd} X^{d}$.
\item There exists a complex of locally free sheaves $K^{*}$ on $\textrm{Flag}^{d}_{\mathcal{B}}\times\hig^{(n)}$ concentrated in degree $[-d,0]$ representing $w^{*}(Q')$ where $w$ is the morphism:
$$\textrm{Flag}^{d}_{\mathcal{B}}\times\hig^{(n)}\xrightarrow{w}V'\times\hig^{(n)}$$
in Lemma~\ref{reduction of the main lemma}. Moreover, each term $K^{-p}$ is the pullback of a vector bundle $F^{-p}$ on $X^{d}\times\hig^{(n)}$ via
$$\FB\times\hig^{(n)}\hookrightarrow(\mathcal{Y}\times_{\Pd} X^{d})\times\hig^{(n)}\rightarrow X^{d}\times\hig^{(n)}$$
\end{enumerate1}
\end{lemma}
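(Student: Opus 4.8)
The plan is to handle the two parts separately; Part (1) is essentially formal manipulation of fibre products, while the real content sits in the explicit construction of the resolution in Part (2).

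For Part (1) I would begin from the regular embedding $\mathcal{B}\hookrightarrow\mathcal{Y}\times_{\Pd}X^{(d)}$ produced in Lemma~\ref{factorization of tau}(1), together with the Cartesian squares of Corollary~\ref{Hilbert scheme of P1 and S}(2), which in particular identify $V'=V\times_{X^{(d)}}X^{d}$. Since $\FB=\mathcal{B}\times_{V}V'$ by definition and the composite $\mathcal{B}\xrightarrow{p}V\xrightarrow{r}X^{(d)}$ is the classifying morphism of the universal length $d$ subscheme $D\subseteq X\times\mathcal{B}$ (the same morphism obtained by projecting $\mathcal{B}\hookrightarrow\mathcal{Y}\times_{\Pd}X^{(d)}\to X^{(d)}$), associativity of fibre products yields $\FB=\mathcal{B}\times_{X^{(d)}}X^{d}$, which is precisely the Cartesian diagram asserted. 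Base changing the regular embedding along $X^{d}\to X^{(d)}$ then gives the closed embedding
$$\FB=\mathcal{B}\times_{X^{(d)}}X^{d}\hookrightarrow(\mathcal{Y}\times_{\Pd}X^{(d)})\times_{X^{(d)}}X^{d}=\mathcal{Y}\times_{\Pd}X^{d}.$$

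For Part (2) the input is the relative version of Lemma~\ref{resolution}: the universal sheaf $F$, viewed on $S\times\hig^{(n)}$ and supported on the universal spectral curve, admits a two-term locally free resolution $[q^{*}E\otimes q^{*}L^{-1}\to q^{*}E]$ whose terms are pulled back from $X\times\hig^{(n)}$ along $q\colon S\times\hig^{(n)}\to X\times\hig^{(n)}$. Pulling this back along the $d$ projections $S^{d}\times\hig^{(n)}\to S\times\hig^{(n)}$ and forming the external tensor product over $\hig^{(n)}$ produces a complex $R^{\bullet}$ of locally free sheaves concentrated in degrees $[-d,0]$, whose degree $-p$ term is a direct sum of external products in which $p$ of the factors equal $q^{*}E\otimes q^{*}L^{-1}$; each such term is therefore the pullback of a vector bundle on $X^{d}\times\hig^{(n)}$ along $q^{d}\times\mathrm{id}$. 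I would then argue that $R^{\bullet}$ is a genuine resolution of $(l^{d}\times\mathrm{id})_{*}F^{\boxtimes d}$, the sheaf whose pullback is the first tensor factor of $Q'$ in formula~\ref{Q'}. Pulling $R^{\bullet}$ back along $\sigma_{d}\circ w\colon\FB\times\hig^{(n)}\to S^{d}\times\hig^{(n)}$ and twisting by the line-bundle factor of $Q'$ gives the desired $K^{\bullet}$; since $R^{\bullet}$ is a bounded complex of locally free sheaves, its ordinary pullback computes the derived pullback $w^{*}(Q')$. By Corollary~\ref{Hilbert scheme of P1 and S}(3) the composite $V'\hookrightarrow\Fl^{'d}\xrightarrow{\sigma}S^{d}\to X^{d}$ is the projection $r'$, so the Cartesian diagram of Part (1) identifies $(q^{d}\times\mathrm{id})\circ\sigma_{d}\circ w$ with the structure morphism $\FB\times\hig^{(n)}\to X^{d}\times\hig^{(n)}$, whence each $K^{-p}$ is pulled back from $X^{d}\times\hig^{(n)}$. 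Finally, by Corollary~\ref{Hilbert scheme of P1 and S}(4) the factor $\det(\mathcal{A})^{-1}$ restricts over $V$ to $r^{*}\det(O_{D})^{-1}$, hence over $V'$ to a line bundle pulled back from $X^{d}$, so the twist preserves the property that every term descends from $X^{d}\times\hig^{(n)}$; setting $F^{-p}$ to be the corresponding twisted bundle on $X^{d}\times\hig^{(n)}$ completes the argument.

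The step I expect to be the main obstacle is the exactness claim for $R^{\bullet}$, namely that the external tensor product of the two-term resolutions has no higher homology and really resolves $(l^{d}\times\mathrm{id})_{*}F^{\boxtimes d}$. This is a Tor-independence statement: the $d$ pulled-back copies of $F$ are supported along the spectral-curve directions in distinct $S$-factors, and one must check they are mutually Tor-independent over $\hig^{(n)}$. Here the flatness of the universal sheaf $F$ over $\hig^{(n)}$, combined with the restriction to the locus $V$ where subschemes embed into smooth curves and $\widetilde{\HS}|_{V}\simeq V'$ by Proposition~\ref{rewrite using flag}, is what supplies the required flatness; granted this, the Künneth-type vanishing yields exactness, and the identification of $K^{\bullet}$ as a representative of $w^{*}(Q')$ follows at once.
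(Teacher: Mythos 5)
Your Part (1) is correct and coincides with the paper's argument: $\FB=\mathcal{B}\times_{V}V'$ together with part (2) of Corollary~\ref{Hilbert scheme of P1 and S} and part (1) of Lemma~\ref{factorization of tau} gives both the Cartesian square and the closed embedding. Your Part (2) also follows the paper's construction (the two-term resolution of Lemma~\ref{resolution}, its $d$-fold external product $R^{\bullet}$, pullback, and the determinant twist handled via part (4) of Corollary~\ref{Hilbert scheme of P1 and S} and part (3) of Lemma~\ref{properties of abel-jacobian map}), but it has a genuine gap at the decisive step. The sheaf $Q'$ of formula~\ref{Q'} is the \emph{underived} pullback $(\sigma_{d}\times\textrm{id})^{*}$ of the sheaf $(l^{d}\times\textrm{id})_{*}F^{\boxtimes d}$, twisted by a line bundle. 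The termwise (ordinary) pullback of $R^{\bullet}$ along $\sigma_{d}\circ w$ computes the derived pullback $L(\sigma_{d}\circ w)^{*}$ of that sheaf on $S^{d}\times\hig^{(n)}$, i.e.\ $Lw^{*}L\sigma_{d}^{*}\bigl((l^{d}\times\textrm{id})_{*}F^{\boxtimes d}\bigr)$; this agrees with $w^{*}(Q')=Lw^{*}\bigl(\sigma_{d}^{*}(\cdots)\bigr)$ only if the higher Tor sheaves $L_{i}\sigma_{d}^{*}\bigl((l^{d}\times\textrm{id})_{*}F^{\boxtimes d}\bigr)$ vanish for $i>0$. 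This is not automatic: $\sigma_{d}$ restricted to $V'$ is not flat --- its fibers over points of the diagonals of $S^{d}$ are positive-dimensional --- and the sheaf in question is supported on $\mathcal{C}^{d}\times_{H}\hig^{(n)}$, which meets exactly that locus. So ``its ordinary pullback computes the derived pullback $w^{*}(Q')$'' is precisely the assertion that needs proof, not a consequence of $R^{\bullet}$ being a bounded complex of locally free sheaves. The paper closes this gap by citing the result of $[1]$ that $L\sigma^{*}(E'^{\boxtimes d})\simeq\sigma^{*}(E'^{\boxtimes d})$, a nontrivial theorem resting on the geometry of the isospectral/flag Hilbert scheme. (Redefining $Q'$ as the derived pullback would not help: Lemma~\ref{summand}, which is how $Q'$ is used later, treats $Q'$ as a sheaf, so the same Tor-vanishing would reappear there.)

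Relatedly, you locate the ``main obstacle'' in the wrong place. The exactness of $R^{\bullet}$ on $S^{d}\times\hig^{(n)}$ is the routine part: the $d$ factors live in distinct $S$-directions and the universal bundle is flat over $\hig^{(n)}$, so the K\"unneth argument goes through; this has nothing to do with $V$, $V'$ or Proposition~\ref{rewrite using flag}, since $R^{\bullet}$ lives on $S^{d}\times\hig^{(n)}$, upstream of any restriction to $V'$. The genuine difficulty is the survival of exactness after pullback along the non-flat $\sigma_{d}$, which your proposal never isolates. With the citation of $[1]$ inserted at that point, the rest of your argument --- the identification of the composite $\FB\times\hig^{(n)}\rightarrow X^{d}\times\hig^{(n)}$ via part (3) of Corollary~\ref{Hilbert scheme of P1 and S}, and the descent of the determinant twist from $X^{d}$ --- is correct and agrees with the paper.
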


\begin{proof}
For $(1)$, notice that $\FB=\mathcal{B}\times_{V}V'$, so the claim follows from part $(2)$ of Corollary~\ref{Hilbert scheme of P1 and S} and part $(1)$ of Lemma~\ref{factorization of tau}\\
For part $(2)$, let $E'$ be the universal Higgs bundle on $X\times\hig^{(n)}$, viewed as a coherent sheaf on the surface $S$. Then by Lemma~\ref{resolution}, we see that:
$$(q^{*}(E')\otimes q^{*}(L^{-1})\rightarrow q^{*}(E'))^{\boxtimes d}$$
is a locally free resolution of $E^{'\boxtimes d}$ as coherent sheaves on $S^{d}\times\hig^{(n)}$. Now consider:
$$V'\times\hig^{(n)}\xrightarrow{\sigma} S^{d}\times\hig^{(n)}$$
It is proved in $[1]$ that $L\sigma^{*}(E^{'\boxtimes d})\simeq\sigma^{*}(E^{'\boxtimes d})$, hence
$$\sigma^{*}(q^{*}(E')\otimes q^{*}(L^{-1})\rightarrow q^{*}(E'))^{\boxtimes d}$$
is a locally free resolution of $\sigma^{*}(E^{'\boxtimes d})$ on $V'\times\hig^{(n)}$. By construction, $Q'$ is given by $\sigma^{*}(E^{' \boxtimes d})\otimes\psi^{*}(\de(O_{D'}))^{-1}$ where $D'$ is the universal subscheme of $S$ on $S\times V$. Now observe that by Corollary~\ref{Hilbert scheme of P1 and S}, the composition of
$$V'\xrightarrow{\sigma} S^{d}\xrightarrow{q} X^{d}$$
is identified with the natural projection
$$V'\xrightarrow{r'} X^{d}$$
Hence the composition of
$$\FB\times\hig^{(n)}\xrightarrow{w}V'\times\hig^{(n)}\xrightarrow{\sigma} S^{d}\times\hig^{(n)}\xrightarrow{q} X^{d}\times\hig^{(n)}$$
is identified with
$$\FB\times\hig^{(n)}\hookrightarrow(\mathcal{Y}\times_{\Pd} X^{d})\times\hig^{(n)}\rightarrow X^{d}\times\hig^{(n)}$$
Also, by Corollary~\ref{Hilbert scheme of P1 and S} and Lemma~\ref{properties of abel-jacobian map}, $\psi^{*}(\de(O_{D'})^{-1})\simeq r'^{*}(O(\sum_{i<j}\Delta_{ij}))$, hence the assertion follows from this.

\end{proof}
As a byproduct, we get the following more explicit description about the vector bundle $F^{-p}$ on $X^{d}\times\hig^{(n)}$:
\begin{corollary}\label{explicit description of resolutions}
Let $E'$ be the universal Higgs bunle on $X\times\hig^{(n)}$. Then the vector bundle $F^{-p}$ on $X^{d}\times\hig^{(n)}$  is a direct sum of vector bundles of the form
$$\mathcal{F}_{1}\boxtimes\cdots\boxtimes\mathcal{F}_{d}\otimes O(\sum_{i<j}\Delta_{ij})$$
where there exists a subset $I$ of $\{1,2,\cdots,d\}$ consisting of $p$ elements such that for all $i\in I$, $\mathcal{F}_{i}\simeq E'\otimes L^{-1}$, and $\mathcal{F}_{j}\simeq E'$ for all $j$ not in $I$.

\end{corollary}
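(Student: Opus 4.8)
The plan is to read off the terms of the complex $K^{*}$ directly from its construction in the proof of Lemma~\ref{existence of resolutions}, so that this corollary becomes essentially bookkeeping layered on top of that lemma rather than a new argument.

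First I would recall that by Lemma~\ref{resolution} the universal Higgs bundle $E'$, regarded as a coherent sheaf on $S\times\hig^{(n)}$, admits the two-term locally free resolution
$$q^{*}(E')\otimes q^{*}(L^{-1})\to q^{*}(E')$$
placed in cohomological degrees $[-1,0]$. Taking the $d$-fold external tensor product of this complex with itself yields the resolution $(q^{*}(E')\otimes q^{*}(L^{-1})\to q^{*}(E'))^{\boxtimes d}$ of $E'^{\boxtimes d}$ on $S^{d}\times\hig^{(n)}$ used in that proof. Since the external tensor product of two-term complexes splits degreewise into a direct sum indexed by the choice of which factors contribute their degree $-1$ term, the degree $-p$ piece is the direct sum, over subsets $I\subseteq\{1,\dots,d\}$ with $|I|=p$, of the box products in which the $i$-th factor is $q^{*}(E')\otimes q^{*}(L^{-1})$ when $i\in I$ and $q^{*}(E')$ otherwise.

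Next I would pull this description back along $\sigma$ and invoke the identifications already established. By part $(3)$ of Corollary~\ref{Hilbert scheme of P1 and S} the composition $V'\xrightarrow{\sigma}S^{d}\xrightarrow{q}X^{d}$ is the projection $r'$, and this identification is compatible with the factorwise structure, so the $i$-th slot $q^{*}(E')$ (resp. $q^{*}(E')\otimes q^{*}(L^{-1})$) pulls back to $r'^{*}$ of $E'$ (resp. $E'\otimes L^{-1}$) placed in the $i$-th factor of $X^{d}$; here the twist by $q^{*}(L^{-1})$ transports to a twist by $L^{-1}$ because $L$ is pulled back from $X$. Finally, since $Q'=\sigma^{*}(E'^{\boxtimes d})\otimes\psi^{*}(\de(O_{D'}))^{-1}$ and $\psi^{*}(\de(O_{D'})^{-1})\simeq r'^{*}(O(\sum_{i<j}\Delta_{ij}))$ by Corollary~\ref{Hilbert scheme of P1 and S} together with part $(3)$ of Lemma~\ref{properties of abel-jacobian map}, tensoring each summand by this line bundle exhibits the degree $-p$ term $K^{-p}$ as the $r'$-pullback of
$$F^{-p}=\bigoplus_{|I|=p}(\mathcal{F}_{1}\boxtimes\cdots\boxtimes\mathcal{F}_{d})\otimes O(\sum_{i<j}\Delta_{ij}),$$
with $\mathcal{F}_{i}\simeq E'\otimes L^{-1}$ for $i\in I$ and $\mathcal{F}_{j}\simeq E'$ for $j\notin I$, which is precisely the asserted form.

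There is essentially no serious obstacle here: the content is combinatorial (expanding a tensor power of a two-term complex) together with the two geometric identifications $q\circ\sigma=r'$ and $\psi^{*}(\de(O_{D'})^{-1})\simeq r'^{*}(O(\sum_{i<j}\Delta_{ij}))$, both supplied by Corollary~\ref{Hilbert scheme of P1 and S}. The only point demanding a little care is verifying that the factorwise identification of $\sigma^{*}q^{*}(E')$ with the $r'$-pullback of $E'$ in the correct slot is genuinely respected, i.e. that no permutation of the factors is silently introduced; this is guaranteed because the flag structure on $V'$ orders the factors compatibly with the projection to $X^{d}$.
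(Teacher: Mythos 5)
Your proposal is correct and takes essentially the same route as the paper: the paper offers no separate proof, stating the corollary as a byproduct of its proof of Lemma~\ref{existence of resolutions}, and your argument---expanding the $d$-fold box product of the two-term resolution from Lemma~\ref{resolution} into summands indexed by subsets $I$ with $|I|=p$, transporting each slot through the identification $q\circ\sigma=r'$ of Corollary~\ref{Hilbert scheme of P1 and S}, and twisting by $\psi^{*}(\de(O_{D'})^{-1})\simeq r'^{*}(O(\sum_{i<j}\Delta_{ij}))$---is exactly the bookkeeping implicit there. Your closing remark about no permutation of factors being introduced is also the right point to flag, and it is indeed settled by part $(3)$ of Corollary~\ref{Hilbert scheme of P1 and S}.
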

We also need the following standard fact to prove Lemma~\ref{reduction of the main lemma}:
\begin{lemma}\label{spectral sequence}
Let $X\xrightarrow{f}Y$ be a proper morphism of schemes, and $M\in D^{b}_{coh}(X)$ represented by a complex of the form:
$$0\rightarrow C^{-n}\rightarrow\cdots\rightarrow C^{0}\rightarrow 0$$
If $R^{i}f_{*}C^{-j}=0$ for $i>j$, then $R^{p}f_{*}M=0$ for $p>0$.
\end{lemma}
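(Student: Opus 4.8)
The plan is to deduce the vanishing from the first hyperderived-functor spectral sequence attached to the brutal (stupid) filtration of the complex $C^{\bullet}$ representing $M$. For the proper morphism $X\xrightarrow{f}Y$ and the bounded complex $0\to C^{-n}\to\cdots\to C^{0}\to 0$, this spectral sequence has the form
$$E_{1}^{p,q}=R^{q}f_{*}C^{p}\Rightarrow R^{p+q}f_{*}M,$$
with differential $d_{1}^{p,q}=R^{q}f_{*}(d_{C}^{p})\colon E_{1}^{p,q}\to E_{1}^{p+1,q}$. Since $C^{p}$ is nonzero only for $-n\leq p\leq 0$ and $R^{q}f_{*}$ vanishes for $q<0$ (and for $q$ larger than the fiber dimension, by properness), the $E_{1}$-page is supported in a bounded region and the spectral sequence converges.

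First I would rewrite the hypothesis in terms of the $E_{1}$-page. Putting $j=-p$ and $i=q$, the assumption that $R^{i}f_{*}C^{-j}=0$ for $i>j$ says exactly that $E_{1}^{p,q}=R^{q}f_{*}C^{p}=0$ whenever $q>-p$, that is, whenever $p+q>0$. Hence every nonzero entry of the $E_{1}$-page lies in the region $p+q\leq 0$. All higher differentials preserve total degree up to a shift by $1$, so no differential can create a nonzero entry outside this region; consequently $E_{\infty}^{p,q}=0$ for all $p+q>0$. Because the graded pieces of the abutment in total degree $k$ are precisely the surviving terms $E_{\infty}^{p,q}$ with $p+q=k$, we obtain $R^{k}f_{*}M=0$ for every $k>0$, which is the assertion.

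The only real content here is index bookkeeping together with convergence; there is no serious obstacle, since boundedness of $C^{\bullet}$ and the finite cohomological amplitude of $Rf_{*}$ for a proper morphism guarantee that the spectral sequence is well behaved. If one prefers to avoid spectral sequences entirely, the same result follows by an induction on $n$ using the brutal truncation triangle
$$\sigma_{\geq -n+1}C^{\bullet}\to C^{\bullet}\to C^{-n}[n]\xrightarrow{+1},$$
where the hypothesis gives $R^{p}f_{*}(C^{-n}[n])=R^{p+n}f_{*}C^{-n}=0$ for $p>0$, the inductive hypothesis handles $\sigma_{\geq -n+1}C^{\bullet}$, and the associated long exact sequence of pushforwards then forces $R^{p}f_{*}C^{\bullet}=0$ for $p>0$; the base case $n=0$ is immediate from the hypothesis applied to $C^{0}$.
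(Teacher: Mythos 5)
Your proof is correct. The paper states this lemma as a ``standard fact'' and gives no proof at all, so there is nothing to diverge from; your argument via the hypercohomology spectral sequence $E_{1}^{p,q}=R^{q}f_{*}C^{p}\Rightarrow R^{p+q}f_{*}M$ attached to the stupid filtration is exactly the standard argument the paper's label ``spectral sequence'' points to, and the convergence issue is handled correctly since the filtration is finite. Your alternative induction on $n$ using the brutal truncation triangle $\sigma_{\geq -n+1}C^{\bullet}\to C^{\bullet}\to C^{-n}[n]\xrightarrow{+1}$ is also valid and has the minor virtue of avoiding spectral sequences altogether.
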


Now we are ready to prove Lemma~\ref{reduction of the main lemma}. But before we enter into the proof, let us indicate the main ingredients of the proof in the following lemma:
\begin{lemma}\label{baby version of the proof}
Let $A$ be a scheme of finite type over a field $k$, $\mathcal{E}$ a vector bundle of rank $n$ on $A$ with a section $s$. Let $Z$ be the closed subscheme of $A$ given by the vanishing locus of $s$. Assume that $Z$ is a local complete intersection of codimension $n$ in $A$. Let $\blA$ be the blowup of $A$ along $Z$ and $\mathbf{P}$ be the projective bundle associated to $\mathcal{E}$ and $O(1)$ be the relative ample line bundle. Consider the diagram:
$$
\xymatrix{
\blA \ar[rr]^{i} \ar[rd]_{\pi} & & \mathbf{P} \ar[ld]^{\pi'}\\
 & A &\\
}
$$
Assume that $K\in D^{b}_{coh}(\blA)$ is represented by a complex of the form
$$0\rightarrow C^{-r}\rightarrow C^{-(r-1)}\rightarrow\cdots\rightarrow C^{0}\rightarrow 0$$
such that each term $C^{-i}$ is the pullback of a vector bundle $D^{-i}$ on $\mathbf{P}$. If $R^{j}\pi'_{*}(D^{-i}\otimes O(a))=0$ for all $j>i$ and $a\geq 0$, then we have $R\pi_{*}(K)\in D^{\leq 0}_{coh}(A)$.
\end{lemma}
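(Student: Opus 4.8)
The plan is to factor the computation of $R\pi_*K$ through the projective bundle $\mathbf{P}$ and then reduce, via a hypercohomology spectral sequence, to a termwise cohomological vanishing statement that is precisely governed by Lemma~\ref{pushforward of bl}. Since the diagram commutes we have $\pi = \pi'\circ i$, and because $K$ is represented by the bounded complex $C^\bullet$ of vector bundles on $\blA$ concentrated in degrees $[-r,0]$, Lemma~\ref{spectral sequence} (applied to $f=\pi$ and $M=K$) tells us it is enough to establish
$$R^{i}\pi_{*}(C^{-j})=0\qquad\text{for all } i>j.$$
Thus the whole statement reduces to a cohomological amplitude bound for the pushforward of each individual term $C^{-j}=i^{*}D^{-j}$.

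To obtain this termwise bound I would invoke Lemma~\ref{pushforward of bl}, whose setup is supplied by Proposition~\ref{blowup as regular embedding}: the hypothesis that $Z$ is a local complete intersection of codimension equal to $\operatorname{rank}(\mathcal{E})$ cut out by $s$ makes $s$ a regular section, so $\blA$ sits as a regular embedding $i\colon\blA\hookrightarrow\mathbf{P}(\mathcal{E})$ and we are exactly in the situation of Lemma~\ref{pushforward of bl} (with the roles of its morphisms $p$ and $\pi$ played here by $\pi$ and $\pi'$). The key maneuver is that the graded hypothesis $R^{i}\pi'_{*}(D^{-j}\otimes O(a))=0$ for $i>j$ and $a\geq 0$ is not quite in the $D^{\leq 0}$ form required by that lemma, so I would apply the lemma not to $D^{-j}$ but to its shift $D^{-j}[j]$. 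For this object the hypothesis $R\pi'_{*}\big((D^{-j}[j])\otimes O(a)\big)=R\pi'_{*}(D^{-j}\otimes O(a))[j]\in D^{\leq 0}_{coh}(A)$ for $a\geq 0$ is precisely our assumption, and since $D^{-j}$ is locally free we have $Li^{*}(D^{-j}[j])=(i^{*}D^{-j})[j]=C^{-j}[j]$. Lemma~\ref{pushforward of bl} then yields $R\pi_{*}(C^{-j})[j]=R\pi_{*}(C^{-j}[j])\in D^{\leq 0}_{coh}(A)$, that is $R^{i}\pi_{*}(C^{-j})=0$ for $i>j$, which is exactly the termwise vanishing needed above.

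The argument is essentially an assembly of Lemma~\ref{pushforward of bl} and Lemma~\ref{spectral sequence}, so rather than a genuine obstacle the delicate point is the bookkeeping of the shift: the allowed cohomological amplitude of $R\pi_{*}(C^{-j})$ grows with $j$, and this is exactly matched to the weights entering the spectral sequence of Lemma~\ref{spectral sequence}. I would also flag the reason one cannot shortcut the termwise step: the differentials $C^{-j}\to C^{-j+1}$ of $C^\bullet$ need not be pulled back from morphisms $D^{-j}\to D^{-j+1}$ on $\mathbf{P}$, so there is in general no object on $\mathbf{P}$ whose derived restriction recovers $K$, and a single application of Lemma~\ref{pushforward of bl} to the total complex is therefore unavailable. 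Combining the termwise bound with Lemma~\ref{spectral sequence} gives $R^{p}\pi_{*}K=0$ for all $p>0$, i.e. $R\pi_{*}(K)\in D^{\leq 0}_{coh}(A)$, as claimed.
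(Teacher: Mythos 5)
Your proposal is correct and follows essentially the same route as the paper: the paper's proof applies Lemma~\ref{pushforward of bl} termwise to conclude $R^{j}\pi_{*}(C^{-i})=0$ for $j>i$ and then invokes Lemma~\ref{spectral sequence}, exactly as you do. Your explicit handling of the shift $D^{-j}[j]$ and the remark about the differentials not descending to $\mathbf{P}$ simply make precise the bookkeeping the paper leaves implicit.
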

\begin{proof}
By Lemma~\ref{pushforward of bl}, the assumptions on $D^{-i}$ implies that $R^{j}\pi_{*}(C^{-i})=0$ for $j>i$, hence the claim follows from Lemma~\ref{spectral sequence}.

\end{proof}

Now let us start the proof of Lemma~\ref{reduction of the main lemma}:
\begin{proof}
Let us deal with part $(1)$ first. We want to apply Lemma~\ref{baby version of the proof} with $A=\mathcal{Y}\times\hig^{(n)}$, $\blA=\mathcal{B}\times\hig^{(n)}$. The argument is divided into the following steps:\\
\\
Step $1$: $\psi'_{*}(w^{*}(Q'))\in D^{b}_{coh}(\mathcal{B}\times\hig^{(n)})$ is represented by a complex of locally free sheaves of the form:
$$0\rightarrow C^{-d}\rightarrow C^{-(d-1)}\rightarrow\cdots\rightarrow C^{0}\rightarrow 0$$
Indeed, from Lemma~\ref{existence of resolutions}, we see that $w^{*}(Q')$ on $\textrm{Flag}^{d}_{\mathcal{B}}\times\hig^{(n)}$ is represented by a complex of locally free sheaves of the form
$$0\rightarrow K^{-d}\rightarrow K^{-(d-1)}\rightarrow\cdots\rightarrow K^{0}\rightarrow 0$$
Since $\psi'$ is finite flat of degree $d!$, we are done.\\
\\
Step $2$: Set $\mathbf{P}=\mathcal{Y}\times_{\Pd}X^{(d)}$. Then $\mathbf{P}$ is a projective bundle over $\mathcal{Y}$ associated with a certain vector bundle $\mathcal{E}$ on $\mathcal{Y}$. Moreover, $\mathcal{Z}$ is the vanishing locus of a section of $\mathcal{E}$. Hence we have the diagram:
$$\xymatrix{
\mathcal{B} \ar[rr]^{i} \ar[rd]_{\pi} & & \mathbf{P} \ar[ld]^{f'}\\
 & \mathcal{Y} &\\
}
$$
Indeed, the claim follows directly from part $(1)$ of Lemma~\ref{factorization of tau}\\
\\
Step $3$: Each term of the complex $C^{-i}$ on $\mathcal{B}\times\hig^{(n)}$ in Step $1$ is the pullback of a vector bundle $D^{-i}$ on $\mathbf{P}\times\hig^{(n)}$.\\
Indeed, set $\mathbf{P}'=\mathcal{Y}\times_{\Pd}X^{d}$. By part $(1)$ of Lemma~\ref{factorization of tau}, part $(2)$ of Corollary~\ref{Hilbert scheme of P1 and S} and the definition of $\FB$, we have the following commutative diagram with Cartesian squares:
$$\xymatrix{
\FB\times\hig^{(n)} \ar[r]^{i'} \ar[d]^{\psi'} & \mathbf{P}'\times\hig^{(n)} \ar[r]^{\theta'} \ar[d] & X^{d}\times\hig^{(n)} \ar[d]^{h}\\
\mathcal{B}\times\hig^{(n)} \ar[r]^{i} & \mathbf{P}\times\hig^{(n)} \ar[r]^{\theta} & X^{(d)}\times\hig^{(n)}\\
}
$$
By Lemma~\ref{existence of resolutions}, each $K^{-i}$ on $\FB\times\hig^{(n)}$ is the pullback of a vector bundle $F^{-i}$ on $X^{d}\times\hig^{(n)}$, hence each $C^{-i}=\psi'_{*}(K^{-p})$ is the pullback of $D^{-i}=\theta^{*}(h_{*}(F^{-i}))$ on $\mathbf{P}\times\hig^{(n)}$.\\
\\
Step $4$: Consider
$$\xymatrix{
\mathbf{P}\times\hig^{(n)} \ar[d]_{f'}\\
\mathcal{Y}\times\hig^{(n)}\\
}
$$
Then $R^{j}f'_{*}(D^{-i}\otimes O(a))=0$ for all $j>i$ and $a\geq 0$.\\
To see this, consider the following diagram where the bottom square is Cartesian:
$$\xymatrix{
 & X^{d}\times\hig^{(n)} \ar[d]^{h}\\
\mathbf{P}\times\hig^{(n)} \ar[r]^{\theta} \ar[d]^{f'} & X^{(d)}\times\hig^{(n)} \ar[d]^{f}\\
\mathcal{Y}\times\hig^{(n)} \ar[r] & \Pd\times\hig^{(n)}\\
}
$$
By Step $3$, each $D^{-i}=\theta^{*}(h_{*}(F^{-i}))$, so from part $1$ of Lemma~\ref{factorization of tau} we see that $\mathcal{Y}\rightarrow\Pd$ is smooth, hence we only need to prove:
$$R^{j}f_{*}(h_{*}(F^{-i})\otimes O(a))=0$$
for $j>i$ and $a\geq 0$. Moreover, by projection formula, it is enough to prove that
$$R^{j}f_{*}(F^{-i}\otimes h^{*}(O(a))\otimes f^{*}(O(\Theta)))=0$$
for $j>i$ and $a\geq 0$ where $\Theta$ is the theta divisor on $\Pd$. Now the claim is an easy consequence of Lemma~\ref{vanishing of pushforward}, using the description of $F^{-i}$ in Lemma~\ref{explicit description of resolutions}, the description of $O(\Theta)$ and $O(1)$ in Lemma~\ref{properties of abel-jacobian map} and our assumptions on $E'$ in the beginning of this subsection.\\
Now from Lemma~\ref{baby version of the proof}, the proof of part $1$ of Lemma~\ref{reduction of the main lemma} is complete.\\
\\
Now let us turn to the proof of part $2$ of Lemma~\ref{reduction of the main lemma}. The proof of similar to the argument in part $1$. First notice that by Grothendieck duality and Proposition~\ref{dualizing sheaf of bl}, we only need to prove
$$R\pi_{*}\RH(\psi'_{*}(w^{*}(Q')),O((d-g)E))\in D^{\leq d}_{coh}(\mathcal{Y}\times\hig)$$
where $E$ is the exceptional divisor of the blowup $\mathcal{B}\rightarrow\mathcal{Y}$.\\
\\
Step $1$: $\RH(\psi'_{*}(w^{*}(Q')),O((d-g)E))\in D^{\leq d}_{coh}(\mathcal{Y}\times\hig)$ is represented by a complex of the form:
$$0\rightarrow C^{0}\rightarrow C^{1}\rightarrow\cdots\rightarrow C^{d}\rightarrow 0$$
Indeed, consider the following diagram with Cartesian squares:
$$\xymatrix{
\FB\times\hig^{(n)} \ar[r]^{i'} \ar[d]^{\psi'} & \mathbf{P}'\times\hig^{(n)} \ar[r]^{\theta'} \ar[d] & X^{d}\times\hig^{(n)} \ar[d]^{h}\\
\mathcal{B}\times\hig^{(n)} \ar[r]^{i} & \mathbf{P}\times\hig^{(n)} \ar[r]^{\theta} & X^{(d)}\times\hig^{(n)}\\
}
$$
Because of part $(1)$ of Lemma~\ref{existence of resolutions}, the relative dualizing sheaf $\omega$ of the morphism $\psi'$ is isomorphic to $i'^{*}\theta'^{*}(\omega_{X^{d}|X^{(d)}})$. Since $w^{*}(Q')$ is represented by the complex of locally free sheaves $K^{*}$, we conclude from duality that
$$\RH(\psi'_{*}(w^{*}(Q')),O((d-g)E))$$
is represented by a complex of the prescribed form with
$$C^{i}=\psi'_{*}(\check{K}^{-i}\otimes\omega)\otimes O((d-g)E)$$.\\
\\
Step $2$: Each term $C^{i}$ is the pullback of a vector bundle $D^{i}$ on $\mathbf{P}\times\hig^{(n)}$.
Indeed, each $\check{K}^{-i}$ is the pullback of $\check{F}^{-i}$ on $X^{d}\times\hig$, and $\omega$ is also the pullback of $\omega_{X^{d}|X^{(d)}}$ on $X^{d}\times\hig$, and $O(E)$ is the pullback of $O(-1)$ from $\mathbf{P}\times\hig^{(n)}$, hence each $C^{i}$ is the pullback of
$$\theta^{*}(h_{*}(\check{F}^{-i}\otimes\omega_{X^{d}|X^{(d)}}))\otimes O(-1)^{\otimes (d-g)}$$
on $\mathbf{P}\times\hig$.\\
\\
Step $3$: We have $R^{j}f'_{*}(D^{i}\otimes O(a))=0$ for all $j>d-i$ and $a\geq 0$.
To see this, use diagram:
$$\xymatrix{
 & X^{d}\times\hig^{(n)} \ar[d]^{h}\\
\mathbf{P}\times\hig^{(n)} \ar[r]^{\theta} \ar[d]^{f'} & X^{(d)}\times\hig^{(n)} \ar[d]^{f}\\
\mathcal{Y}\times\hig^{(n)} \ar[r] & \Pd\times\hig^{(n)}\\
}
$$
where the bottom square is Cartesian. By step $2$, each $D^{i}\otimes O(a)$ is the pullback of
$$h_{*}(\check{F}^{-i}\otimes\omega_{X^{d}|X^{(d)}})\otimes O(a-d+g)$$
from $X^{(d)}\times\hig^{(n)}$. Part $1$ of Lemma~\ref{factorization of tau} implies that $\mathcal{Y}\rightarrow\Pd$ is smooth, hence we only need to prove:
$$R^{j}f_{*}(h_{*}(\check{F}^{-i}\otimes\omega_{X^{d}|X^{(d)}})\otimes O(a-d+g))=0$$
for all $i>d-i$ and $a\geq 0$. This again follows from Lemma~\ref{properties of abel-jacobian map} and our assumptions on $E'$.\\
From Lemma~\ref{baby version of the proof} again, the proof of part $2$ of Lemma~\ref{reduction of the main lemma} is also complete.

\end{proof}

\section{The proof of the main theorem}
In this last section we will prove the main theorem. First let us establish the following:
\begin{lemma}
Consider the diagram in Lemma~\ref{the main lemma}
$$\xymatrix{
\mathcal{B}\times\hig^{(n)} \ar[r]^{p} \ar[d]^{\pi} & V\times\hig^{(n)}\\
\mathcal{Y}\times\hig^{(n)}
}
$$
Then
\begin{enumerate1}
\item $R\pi_{*}(p^{*}(Q))$ is a maximal Cohen-Macaulay sheaf of codimension $d$ on $\mathcal{Y}\times\hig^{(n)}$ supported in $\hig'^{m}\times_{H}\hig^{(n)}$. Let us remind the reader again that $d$ is equal to the codimension of $\hig'^{m}\times_{H}\hig^{(n)}$ inside $\mathcal{Y}\times\hig^{(n)}$.
\item Let $U_{m}$ be the open substack of $\hig^{m}$ given by the condition that the underlying vector bundle $E$ of the Higgs bundle satisfies the condition $H^{1}(E)=0$ and $H^{0}(\check{E}\otimes L)=0$. Then $R\pi_{*}(p^{*}(Q))$ descends down to a maximal Cohen-Macaulay sheaf $\mathcal{P}'$ on $U_{m}\times_{H}\hig^{(n)}$ such that its restriction to $(\hig^{reg}\cap U_{m})\times_{H}\hig^{(n)}$ agrees with the pullback of the Poincar\'e line bundle along
$$\hig^{reg}\times_{H}\hig^{(n)}\xrightarrow{\tau\times\textrm{id}}\hig^{reg}\times_{H}\hig^{(n)}$$
where $\tau$ is the involution of $\hig$ defined in part $(2)$ of Lemma~\ref{factorization of tau}.
\end{enumerate1}
\end{lemma}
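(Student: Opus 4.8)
The plan is to read off the Cohen--Macaulay structure in $(1)$ from Lemma~\ref{the main lemma} by a biduality argument, to pin down the support using the dimension bounds of Lemma~\ref{further props of B}, and then to obtain $(2)$ by descending along the morphism forgetting $(s,\sigma)$ and identifying the result on the regular locus through Lemma~\ref{factorization of tau}.

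For $(1)$, set $K=R\pi_{*}(p^{*}(Q))\in D^{b}_{coh}(\mathcal{Y}\times\hig^{(n)})$; recall that $\mathcal{Y}\times\hig^{(n)}$ is smooth, hence Gorenstein, so biduality $\RH(\RH(K,O),O)\simeq K$ holds. By Lemma~\ref{the main lemma} we have $K\in D^{\leq 0}_{coh}$ and $\RH(K,O)\in D^{\leq d}_{coh}$. Since $Q$ is supported on $\textrm{Hilb}^{d}_{\mathcal{C}|H}\times_{H}\hig^{(n)}$, the support of $K$ is contained in $\pi(p^{-1}(\textrm{Hilb}^{d}_{\mathcal{C}|H}\times_{H}\hig^{(n)}))$, which by Lemma~\ref{further props of B} is the union of the codimension-$d$ substack $\hig'^{m}\times_{H}\hig^{(n)}$ and the locus $\mathcal{W}$ lying over $\mathcal{Z}$; the dimension estimate there, together with $l\geq 2g-1$, shows that $\mathcal{W}$ has codimension strictly greater than $d$. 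In particular the support of $K$ has codimension $\geq d$, so $\mathcal{E}xt^{i}(-,O)$ vanishes below degree $d$ on it and $\RH(K,O)\in D^{\geq d}_{coh}$. Combined with $\RH(K,O)\in D^{\leq d}_{coh}$, this forces $\RH(K,O)$ to be a single sheaf placed in degree $d$; dualizing again and using biduality gives $K\in D^{\geq 0}_{coh}$, so $K$ is a sheaf in degree $0$ whose dual lives in degree $d$, i.e. $K$ is Cohen--Macaulay of codimension $d$. Finally Proposition~\ref{codimension of supp of CM sheaves} rules out embedded and lower-dimensional components, so the support is exactly the closed substack $\hig'^{m}\times_{H}\hig^{(n)}$.

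For $(2)$, I would descend $K$ along the smooth surjection $c\colon\hig'^{m}\times_{H}\hig^{(n)}\to U_{m}\times_{H}\hig^{(n)}$ forgetting $s$ and $\sigma$, which by Lemma~\ref{elementary properties of Y}(4) is smooth with connected fibres. The idea is to build the descent datum over the dense open regular locus and then to propagate it by uniqueness of Cohen--Macaulay extensions. Over $(\hig^{reg}\cap U_{m})\times_{H}\hig^{(n)}$ the blowup map $\pi$ is an isomorphism (the regular locus lies in $\mathcal{Y}\setminus\mathcal{Z}$), so there $K\simeq p^{*}(Q)$, and by Proposition~\ref{definition of Q}(2) together with the square $v\circ p=\tau\circ u$ of Lemma~\ref{factorization of tau}(2) this is the pullback of the Poincar\'e line bundle along $\tau\times\textrm{id}$. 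As this pullback is manifestly pulled back from $U_{m}\times_{H}\hig^{(n)}$, it carries a tautological descent datum over the regular locus; the equivariance properties of Lemma~\ref{equivariance property} are what guarantee that varying $s$ and rescaling $\sigma$, which alter the divisor and the trivialization, introduce no extra twist.

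It remains to extend the descent datum across the non-regular and nonreduced spectral curves. Since the complement of $\hig^{reg}$ has codimension $\geq 2$, the same holds for the complement of the regular locus inside $\hig'^{m}\times_{H}\hig^{(n)}$ and inside the relevant double fibre product over $U_{m}\times_{H}\hig^{(n)}$; because $K$ is maximal Cohen--Macaulay on its (Gorenstein) support, Proposition~\ref{uniqueness of CM sheaf} shows that both $K$ and any morphism between such sheaves are determined by their restriction to these open loci. Hence the regular-locus descent isomorphism extends uniquely and automatically satisfies the cocycle condition, so $K$ descends to a maximal Cohen--Macaulay sheaf $\mathcal{P}'$ on $U_{m}\times_{H}\hig^{(n)}$ whose restriction to $(\hig^{reg}\cap U_{m})\times_{H}\hig^{(n)}$ is $(\tau\times\textrm{id})^{*}\mathcal{P}$. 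I expect the main obstacle to be precisely this descent step: checking that the identification on the regular locus is genuinely independent of $(s,\sigma)$ as a descent datum, and that Proposition~\ref{uniqueness of CM sheaf} legitimately carries it across the locus of nonreduced spectral curves, rather than only across the reduced locus where the line-bundle picture is transparent.
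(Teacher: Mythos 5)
Your part (1) is correct and is in substance the paper's own argument: where the paper invokes Lemma $7.7$ of $[1]$ as a black box, you reprove that criterion by biduality (from $K\in D^{\leq 0}_{coh}$, $\RH(K,O)\in D^{\leq d}_{coh}$ and $\textrm{codim}\,\textrm{Supp}(K)\geq d$ you correctly deduce that $K$ is Cohen--Macaulay of codimension $d$), and the support estimate you feed into it --- parts $(1)$ and $(2)$ of Lemma~\ref{further props of B} together with $\textrm{dim}(\hig'^{m}\times_{H}\hig)=m+5l+1>m+3l+2g-1$, which uses $l\geq 2g-1$ --- is exactly the paper's.

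Part (2), however, contains a genuine error. Your parenthetical claim that the preimage of $(\hig^{reg}\cap U_{m})\times_{H}\hig^{(n)}$ lies in $(\mathcal{Y}\setminus\mathcal{Z})\times\hig^{(n)}$ is false in general. A point $(E,\phi,s,\sigma)$ of $\hig'^{m}$ lies in $\mathcal{Z}$ precisely when the composite $O_{X}\xrightarrow{s}E\xrightarrow{\phi}E\otimes L\rightarrow M\otimes L$ vanishes identically, i.e.\ when $\phi(s)=t\cdot s$ for some $t\in H^{0}(L)$, so that $O_{X}s$ is a $\phi$-invariant subbundle. Being in $\hig^{reg}$ does not exclude this: if the spectral curve contains the image of the section $X\xrightarrow{t}S$, say $C=X_{t}\cup X_{t'}$, then a line bundle $\mathcal{L}$ on $C$ carries the $\phi$-invariant saturated sub-line-bundle $\ker(\phi-t)$, and for suitable $\mathcal{L}$ this subbundle is isomorphic to $O_{X}$, so a generating section $s$ yields a point of $\hig'^{m}\cap\mathcal{Z}$ mapping into $\hig^{reg}$. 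This locus is exactly the one the blowup exists to handle, and the paper's own proof treats $(\hig'^{m}\cap\mathcal{Z})\times_{H}\hig^{(n)}$ as a (possibly nonempty) substack of codimension $\geq 3$. Over it $\pi$ is a genuine blowup rather than an isomorphism, so your identification $K\simeq p^{*}(Q)$, and with it your descent datum, is unjustified on part of the regular locus. The repair is small but necessary: construct the datum only on $u^{-1}(\hig^{reg}\cap U_{m})\setminus\mathcal{Z}$, whose complement in $\hig'^{m}\times_{H}\hig^{(n)}$ still has codimension $\geq 2$ thanks to the codimension-$\geq 3$ bound, or, as the paper does, work on all of $(\hig'^{m}\setminus\mathcal{Z})\times_{H}\hig^{(n)}$ and identify $K$ there with the pullback of the maximal Cohen--Macaulay Poincar\'e sheaf on $\widetilde{\hig}\times_{H}\hig^{(n)}$ of Corollary~\ref{CM sheaf constructed in 1} (not merely the Poincar\'e line bundle on $\hig^{reg}$), using Proposition~\ref{definition of Q} and part $(2)$ of Lemma~\ref{factorization of tau}; after that, Proposition~\ref{uniqueness of CM sheaf} extends the datum and the smooth descent proceeds as you describe.
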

\begin{proof}
We are going to prove this by applying Lemma $7.7$ of $[1]$. By Lemma~\ref{the main lemma} and Lemma $7.7$ of $[1]$, we only need to check that the support of $R\pi_{*}(p^{*}(Q))$ has codimension greater than or equals to $d$. Let $\mathcal{W}$ denote the support of $R\pi_{*}(p^{*}(Q))$. First we claim that if we restrict $p$ to $(\mathcal{Y}\backslash\mathcal{Z})\times\hig$, then we have
$$p^{-1}(\textrm{Hilb}^{d}_{\mathcal{C}|H}\times_{H}\hig)=(\hig'^{m}\cap(\mathcal{Y}\backslash\mathcal{Z}))\times_{H}\hig$$
In fact, since the restriction of $p$ to $\mathcal{Y}\backslash\mathcal{Z}$ is smooth by part $(3)$ of Proposition~\ref{resolve the rational map}, and both $\textrm{Hilb}^{d}_{\mathcal{C}|H}\times_{H}\hig$ and $\hig'^{m}\times_{H}\hig$ are integral, we only need to check the equality set theoretically, but this is the content of part $(1)$ of Lemma~\ref{further props of B}. From this we conclude that
$$\mathcal{W}\cap((\mathcal{Y}\backslash\mathcal{Z})\times\hig)=(\hig'^{m}\cap(\mathcal{Y}\backslash\mathcal{Z}))\times_{H}\hig$$
Moreover, from its definition, it is clear that $\mathcal{W}\subseteq\pi(p^{-1}(\textrm{Hilb}^{d}_{\mathcal{C}|H}\times_{H}\hig))$, hence by part $(2)$ of Lemma~\ref{further props of B} we have:
$$\textrm{dim}(\mathcal{W}\cap(\mathcal{Z}\times\hig))\leq m+3l+2g-1$$
On the other hand, part $(5)$ of Lemma~\ref{elementary properties of Y} implies that
$$\textrm{dim}(\hig'^{m}\times_{H}\hig)=m+5l+1$$
Hence we have:
$$\textrm{dim}(\hig'^{m}\times_{H}\hig)>\textrm{dim}(\mathcal{W}\cap(\mathcal{Z}\times\hig))$$
Combine these with part $(3)$ of Lemma~\ref{elementary properties of Y} we have:
$$\textrm{codim}(\mathcal{W})\geq d$$
Hence $R\pi_{*}(p^{*}(Q))$ is a Cohen-Macaulay sheaf of codimension $d$ on $\mathcal{Y}\times\hig^{(n)}$. Moreover, because
$$\textrm{dim}(\hig'^{m}\times_{H}\hig)=m+5l+1>\textrm{dim}(\mathcal{W}\cap(\mathcal{Z}\times\hig))$$
the Cohen-Macaulayness implies that
$$\mathcal{W}\cap(\mathcal{Y}\backslash\mathcal{Z}\times\hig^{(n)})\subseteq\hig'^{m}\times_{H}\hig^{(n)}$$
is dense in $\mathcal{W}$, hence $\mathcal{W}=\hig'^{m}\times_{H}\hig^{(n)}$.\\
For $(2)$, notice that by our construction and the discussions in Proposition~\ref{definition of Q} and part $(2)$ of Lemma~\ref{factorization of tau}, $R\pi_{*}(p^{*}(Q))$ agrees with the pullback of $(\tau\times \textrm{id})^{*}(\mathcal{P})$ on the open substack:
$$(\hig'^{m}\cap(\mathcal{Y}\backslash\mathcal{Z}))\times_{H}\hig^{(n)}$$
where $\mathcal{P}$ is the Poincar\'e sheaf on $widetilde{\hig}\times_{H}\hig^{(n)}$ and $\tau$ is the involution of $\hig$ in part $(2)$ of Lemma~\ref{factorization of tau}.
From the proof of $(1)$ we see that the codimension of
$$(\hig'^{m}\cap\mathcal{Z})\times_{H}\hig^{(n)}$$
in
$$\hig'^{m}\times_{H}\hig^{(n)}$$
is greater than or equals to $3$. So by Proposition~\ref{uniqueness of CM sheaf} the descend data extends to $\hig'^{m}\times_{H}\hig^{(n)}$. So by part $(4)$ of Lemma~\ref{elementary properties of Y} and part $(2)$ of Lemma~\ref{factorization of tau}, it descends to $U_{m}\times_{H}\hig^{(n)}$ and agrees with the pullback of the Poincar\'e line bundle along $\tau\times\textrm{id}$.

\end{proof}

With the previous lemma at hand, we can now prove the main theorem. First notice that from the previous lemma we have a maximal Cohen-Macaulay sheaf $\mathcal{P}'$ on $U_{m}\times_{H}\hig^{(n)}$. Using Lemma~\ref{factorization of tau} and Lemma~\ref{equivariance property}, we see that there exists a line bundle $\mathcal{A}$ on $\hig\times_{H}\hig$ such that the restriction of
$$\check{\mathcal{P}}'\otimes\mathcal{A}$$
to
$$(U_{m}\cap\widetilde{\hig})\times_{H}\hig^{(n)}$$
agrees with the Poincar\'e sheaf on
$$(U_{m}\cap\widetilde{\hig})\times_{H}\hig^{(n)}$$
constructed in section $1.3$. Denote $\check{\mathcal{P}}'\otimes\mathcal{A}$ by $\mathcal{P}_{m}$. So our goal is to extend $\mathcal{P}_{m}$ to $\hig\times_{H}\hig_{ss}$ such that it agrees with the Poincar\'e line bundle on $\hig^{reg}\times_{H}\hig_{ss}$. First choose two consecutive integers $n_{1}$ and $n_{2}$  such that the corresponding stack $\hig^{(n_{i})}$ is nonempty. In fact, by our discussions in Construction~\ref{certain open substack} in section $4$, we can take
$$n_{1}=-2m-2g+2\qquad n_{2}=-2m-2g+3$$
The proof of the main theorem boils down to the following three claims:
\begin{Claim}
In order to construct the Poincar\'e sheaf on $\hig\times_{H}\hig_{ss}$, it is enough to construct the Poincar\'e sheaf on $\hig\times_{H}\hig^{n_{i}}_{ss}$ for all $i$.
\end{Claim}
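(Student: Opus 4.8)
The plan is to combine the decomposition of $\hig_{ss}$ by degree with a degree-shifting automorphism of the Hitchin fibration. First I would write
$$\hig\times_{H}\hig_{ss}=\coprod_{k}\hig\times_{H}\hig^{k}_{ss}.$$
A quasi-coherent sheaf on a disjoint union is just a collection of sheaves on the components, and every defining property of the Poincar\'e sheaf (maximal Cohen-Macaulayness, flatness over $\hig_{ss}$, and agreement with the Poincar\'e line bundle on $\hig^{reg}\times_{H}\hig_{ss}$) can be checked componentwise. Hence it suffices to produce the Poincar\'e sheaf on each $\hig\times_{H}\hig^{k}_{ss}$ separately, and the content of the claim is that the two chosen degrees $n_{1}$ and $n_{2}$ already determine all the others.

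Next I would introduce the degree-shift operator. Fix a line bundle $N$ on $X$ of degree $1$, and let $t_{N}$ be the endomorphism of $\hig$ sending $(E,\phi)$ to $(E\otimes N,\phi)$, where $\phi$ is read as a Higgs field on $E\otimes N$ under the canonical identification $\Hom(E\otimes N,E\otimes N\otimes L)=\Hom(E,E\otimes L)$. Since $\textrm{tr}(\phi)$ and $\de(\phi)$ are unchanged, $t_{N}$ commutes with the Hitchin map $\hig\xrightarrow{h}H$, i.e. it fixes the spectral curve; it is an isomorphism with inverse $t_{N^{-1}}$, it preserves semistability, and as $E$ has rank $2$ it carries $\hig^{k}_{ss}$ isomorphically onto $\hig^{k+2}_{ss}$. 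Under the identification of Higgs bundles with torsion-free rank one sheaves on the spectral curve $\lambda\colon C\to X$, the operator $t_{N}$ is exactly $F\mapsto F\otimes\lambda^{*}N$. Consequently $\textrm{id}\times t_{N}$ is an isomorphism
$$\hig\times_{H}\hig^{k}_{ss}\xrightarrow{\ \sim\ }\hig\times_{H}\hig^{k+2}_{ss}$$
over $H$, carrying $\hig^{reg}\times_{H}\hig^{k}_{ss}$ to $\hig^{reg}\times_{H}\hig^{k+2}_{ss}$.

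The transport of the Poincar\'e sheaf along this isomorphism is governed by the equivariance property of Lemma~\ref{equivariance property}. Because the Poincar\'e line bundle of formula~\ref{Poincare line bundle} is symmetric in the two universal sheaves, that lemma applies equally to the second factor, here with the line bundle $\lambda^{*}N$ on $C$ (and it holds over the nonreduced locus as well, as noted in the remark following the first theorem). On the regular locus it shows that the pullback of the Poincar\'e line bundle along $\textrm{id}\times t_{N}$ equals the Poincar\'e line bundle twisted only by a line bundle $\mathcal{A}$ pulled back from the first factor $\hig$. Therefore, if $\mathcal{P}^{(k+2)}$ is a Poincar\'e sheaf on $\hig\times_{H}\hig^{k+2}_{ss}$, then
$$(\textrm{id}\times t_{N})^{*}\bigl(\mathcal{P}^{(k+2)}\bigr)\otimes p_{1}^{*}(\mathcal{A})$$
is a candidate Poincar\'e sheaf on $\hig\times_{H}\hig^{k}_{ss}$: pullback along an isomorphism and tensoring by a line bundle preserve both maximal Cohen-Macaulayness and flatness over the second factor, and by construction the candidate restricts to the Poincar\'e line bundle on $\hig^{reg}\times_{H}\hig^{k}_{ss}$. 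Running this forward and backward from $n_{1}$ and $n_{2}$ reaches every degree of the correct parity; since $n_{1}$ and $n_{2}$ are consecutive they realize both residues modulo $2$, so every component is covered, which is exactly the assertion.

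The step that requires the most care is the final identification over the regular locus: one must pin down $\mathcal{A}$ so that the transported sheaf agrees \emph{exactly} with the Poincar\'e line bundle on $\hig^{reg}\times_{H}\hig^{k}_{ss}$, rather than merely up to an ambiguous twist, and one must know that $\mathcal{A}$ extends from $\hig^{reg}$ to a genuine line bundle on $\hig$. I expect to handle this by matching the two sheaves over $\hig^{reg}\times_{H}\hig^{k}_{ss}$ via the explicit equivariance formula and then invoking Proposition~\ref{uniqueness of CM sheaf}: since the complement of $\hig^{reg}$ has codimension at least $2$, an isomorphism of these two maximal Cohen-Macaulay sheaves over the regular locus extends uniquely to the whole component, which at the same time shows the resulting sheaf is independent of the auxiliary choice of $N$.
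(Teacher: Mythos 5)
Your proposal is correct and follows essentially the same route as the paper: the paper's proof also translates an arbitrary degree component into $\hig^{n_{i}}$ by tensoring with the degree-one line bundle $O_{X}(x_{0})$ (your $t_{N}$ with $N=O_{X}(x_{0})$), then transports the Poincar\'e sheaf using Lemma~\ref{equivariance property} and pins down agreement with the Poincar\'e line bundle on the regular locus via Proposition~\ref{uniqueness of CM sheaf}. Your write-up merely makes explicit the points the paper leaves implicit (the componentwise reduction, the parity argument from the two consecutive degrees $n_{1},n_{2}$, and the symmetry of formula~\ref{Poincare line bundle} needed to apply the equivariance lemma on the second factor).
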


\begin{Claim}
To construct the Poincar\'e sheaf on $\hig\times_{H}\hig^{n_{i}}_{ss}$, it is enough to construct the Poincar\'e sheaf on $U_{m}\times_{H}\hig^{n_{i}}_{ss}$ for all $m>>0$.
\end{Claim}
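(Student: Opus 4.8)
The plan is to reduce from $\hig$ to the open substacks $U_{m}$ by twisting Higgs bundles in the first factor by line bundles pulled back from $X$, and then to glue the resulting local pieces using the uniqueness of maximal Cohen--Macaulay extensions. For a line bundle $N$ on $X$, the operation $E\mapsto E\otimes N$ leaves $\textrm{tr}(\phi)$ and $\de(\phi)$ unchanged, so it induces an isomorphism of stacks over $H$
$$t_{N}\colon \hig^{k}\xrightarrow{\ \sim\ }\hig^{k+2\deg N},\qquad (E,\phi)\mapsto (E\otimes N,\phi\otimes\textrm{id}_{N}).$$
Taking $N=O_{X}(e\,x_{0})$ with $e\gg 0$, Serre duality shows that both defining conditions of $U_{m}$, namely $H^{1}(E\otimes N)=0$ and $H^{0}(\check{E}\otimes N^{-1}\otimes L)=0$, hold as soon as $e$ exceeds the (finite) maximal degree of a sub-line-bundle of $\check{E}\otimes\Omega_{X}$, respectively of $\check{E}\otimes L$. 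Hence the open substacks $t_{O(ex_{0})}^{-1}(U_{k+2e})$, as $e$ ranges over large integers, form an open cover of $\hig^{k}$; since we are given the Poincar\'e sheaf $\mathcal{P}_{U_{m}}$ on $U_{m}\times_{H}\hig^{n_{i}}_{ss}$ for all $m\gg 0$, we obtain a candidate sheaf over each member of this cover.

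The next step is to transport and normalize these candidates. On the member $W_{e}:=t_{O(ex_{0})}^{-1}(U_{k+2e})\times_{H}\hig^{n_{i}}_{ss}$, pull back $\mathcal{P}_{U_{k+2e}}$ along $t_{O(ex_{0})}\times\textrm{id}$. Over the regular locus this twist is $\mu_{N}\times\textrm{id}$ for the line bundle $N=\lambda^{*}O(ex_{0})$ on the universal spectral curve, so by the family version of the equivariance property Lemma~\ref{equivariance property}(1) the pullback differs from the Poincar\'e line bundle exactly by $p_{2}^{*}\mathcal{P}_{N}$. Setting
$$\mathcal{Q}_{e}:=(t_{O(ex_{0})}\times\textrm{id})^{*}\big(\mathcal{P}_{U_{k+2e}}\big)\otimes p_{2}^{*}\mathcal{P}_{N}^{-1}$$
therefore produces a maximal Cohen--Macaulay sheaf on $W_{e}$ whose restriction to $W_{e}\cap(\hig^{reg}\times_{H}\hig^{n_{i}}_{ss})$ is the Poincar\'e line bundle given by formula~\ref{Poincare line bundle}.

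Finally I would glue. On any overlap $W_{e}\cap W_{e'}$ the sheaves $\mathcal{Q}_{e}$ and $\mathcal{Q}_{e'}$ are both maximal Cohen--Macaulay and both restrict to the same Poincar\'e line bundle on the regular locus, whose complement in $\hig^{k}\times_{H}\hig^{n_{i}}_{ss}$ has codimension at least $2$. By Proposition~\ref{uniqueness of CM sheaf} the restriction to the regular locus is fully faithful on maximal Cohen--Macaulay sheaves, so there is a unique isomorphism $\mathcal{Q}_{e}\simeq\mathcal{Q}_{e'}$ over $W_{e}\cap W_{e'}$ extending the identity on that locus, and the same uniqueness forces the cocycle condition on triple overlaps. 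The glued object is a maximal Cohen--Macaulay sheaf on $\hig^{k}\times_{H}\hig^{n_{i}}_{ss}$ restricting to the Poincar\'e line bundle on the regular locus; taking the disjoint union over all $k$ yields the sheaf on $\hig\times_{H}\hig^{n_{i}}_{ss}$. Flatness over $\hig^{n_{i}}_{ss}$ then follows from Proposition~\ref{flatness of CM sheaves}, because $\hig^{k}\times_{H}\hig^{n_{i}}_{ss}\to\hig^{n_{i}}_{ss}$ is a base change of the flat morphism $\hig^{k}\to H$.

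The main obstacle is the mutual consistency of the pieces $\mathcal{Q}_{e}$: a priori the equivariance twists attached to different values of $e$ need not match. This is precisely what the correction by $p_{2}^{*}\mathcal{P}_{N}^{-1}$ is designed to fix, and its compatibility under $N=N_{1}\otimes N_{2}$ is guaranteed by the multiplicativity of $\mathcal{P}_{N}$ recorded in Lemma~\ref{equivariance property}(1) and (3). Once every $\mathcal{Q}_{e}$ is normalized so as to restrict to the genuine Poincar\'e line bundle on the regular locus, the uniqueness Proposition~\ref{uniqueness of CM sheaf} handles all the gluing automatically, so no further verification of descent data is needed.
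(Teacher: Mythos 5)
Your proposal is correct and takes essentially the same route as the paper: cover $\hig^{k}$ by the open substacks $t_{O(ex_{0})}^{-1}(U_{k+2e})$, transport the given sheaves back along the twist using Lemma~\ref{equivariance property}, and glue via Proposition~\ref{uniqueness of CM sheaf}. The paper's own argument is a terser version of exactly this; your explicit normalization by $p_{2}^{*}\mathcal{P}_{N}^{-1}$ and the cocycle verification on overlaps are details the paper leaves implicit.
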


\begin{Claim}
Let $\hig^{(n_{i})}_{ss}=\hig^{(n_{i})}\cap\hig^{n_{i}}_{ss}$. To construct the Poincar\'e sheaf on $U_{m}\times_{H}\hig^{n_{i}}_{ss}$, it is enough to construct the Poincar\'e sheaf on $U_{m}\times_{H}\hig^{(n_{i})}_{ss}$.
\end{Claim}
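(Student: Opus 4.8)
The plan is to realize $U_m \times_H \hig^{n_i}_{ss}$ as the union of two open substacks on each of which the Poincar\'e sheaf is already available, and then to glue. The first open substack is $U_m \times_H \hig^{(n_i)}_{ss}$, on which the Poincar\'e sheaf is constructed by hypothesis. The second is $U_m \times_H (\widetilde{\hig}\cap\hig^{n_i}_{ss})$, on which the Poincar\'e sheaf is the one produced by the construction of $[1]$, namely the maximal Cohen-Macaulay sheaf of Corollary~\ref{CM sheaf constructed in 1} restricted from the component $\hig\times_H\widetilde{\hig}$ (here the second factor lies in $\widetilde{\hig}$). Gluing maximal Cohen-Macaulay sheaves along an open cover produces a maximal Cohen-Macaulay sheaf, since the defining condition is local; flatness over the second factor is then automatic from Proposition~\ref{flatness of CM sheaves}, and the property of extending the Poincar\'e line bundle over $\hig^{reg}$ is likewise local. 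So everything reduces to (i) checking that these two opens cover $U_m\times_H\hig^{n_i}_{ss}$, and (ii) checking that the two sheaves agree on the overlap.

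For (i), I would argue as follows. The complement of $\hig^{(n_i)}_{ss}$ inside $\hig^{n_i}_{ss}$ consists of semistable Higgs bundles $(E',\phi')$ of degree $n_i$ whose underlying vector bundle $E'$ violates one of the two degree conditions of Construction~\ref{certain open substack}. By our choice $n_i=-2m-2g+2$ or $-2m-2g+3$, any vector bundle that is \emph{semistable} as a bundle of degree $n_i$ automatically satisfies those two conditions; hence a Higgs bundle in this complement must have $E'$ unstable as a vector bundle. Lemma~\ref{a property of semistable higgs bundle} then shows $(E',\phi')\in\widetilde{\hig}$. Therefore $\hig^{n_i}_{ss}\setminus\hig^{(n_i)}_{ss}\subseteq\widetilde{\hig}\cap\hig^{n_i}_{ss}$, so the two open substacks above do cover $U_m\times_H\hig^{n_i}_{ss}$.

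For (ii), both restrictions to the overlap $U_m\times_H(\hig^{(n_i)}_{ss}\cap\widetilde{\hig})$ are maximal Cohen-Macaulay sheaves. By construction, the sheaf on the first open agrees with the Poincar\'e line bundle on the locus where both factors are regular, i.e.\ on $(U_m\cap\hig^{reg})\times_H(\hig^{(n_i)}_{ss}\cap\hig^{reg})$, and the $[1]$-sheaf on the second open also agrees there with the Poincar\'e line bundle. Since the complement of $\hig^{reg}$ has codimension at least $2$ in either factor, the locus where both factors are regular is the complement of a closed substack of codimension at least $2$ in the overlap. Proposition~\ref{uniqueness of CM sheaf} then forces the two maximal Cohen-Macaulay sheaves to be isomorphic on the whole overlap, after fixing the normalization already used to define $\mathcal{P}_m$. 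This yields the gluing datum and hence the Poincar\'e sheaf on $U_m\times_H\hig^{n_i}_{ss}$.

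The \textbf{main obstacle} is really step (ii): one must pin down the normalizations so that the identification on the regular locus is honest and so that the two sheaves agree there on the nose, rather than merely up to a line bundle twist. This is exactly what the choice of $\mathcal{P}_m=\check{\mathcal{P}}'\otimes\mathcal{A}$ was arranged to guarantee, so once both sheaves are identified with the Poincar\'e line bundle on $\hig^{reg}\times_H\hig^{reg}$, the codimension $\geq 2$ argument via Proposition~\ref{uniqueness of CM sheaf} does the rest with no further input.
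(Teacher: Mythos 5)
Your proposal is correct and follows essentially the same route as the paper: the paper likewise writes $\hig^{n_{i}}_{ss}=(\hig^{n_{i}}_{ss}\cap\widetilde{\hig})\cup\hig^{(n_{i})}_{ss}$ using Lemma~\ref{a property of semistable higgs bundle}, takes the already-constructed sheaf of $[1]$ on $U_{m}\times_{H}(\hig^{n_{i}}_{ss}\cap\widetilde{\hig})$, and glues via Proposition~\ref{uniqueness of CM sheaf} after matching both sheaves with the Poincar\'e line bundle over the regular locus, whose complement has codimension at least $2$. Your write-up only adds explicit detail to the covering step (semistable bundles of degree $n_{i}$ satisfy the two conditions, so the complement of $\hig^{(n_{i})}_{ss}$ has unstable underlying bundle), which the paper leaves implicit.
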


\begin{proof}
For claim $(1)$ notice that by tensoring with the line bundle $O_{X}(x_{0})$, we can translate any $\hig^{l}$ into $\hig^{n_{i}}$ for some $i$. Hence from Lemma~\ref{equivariance property} and Proposition~\ref{uniqueness of CM sheaf}, if we can construct the Poincar\'e sheaf on $\hig\times_{H}\hig^{n_{i}}_{ss}$, then we can extend it to the entire $\hig\times_{H}\hig_{ss}$ using Lemma~\ref{equivariance property}, and its restriction to $\hig^{reg}\times_{H}\hig_{ss}$ will agree with the Poincar\'e line bundle.\\

For $(2)$, notice that for any rank $2$ Higgs bundle $E$, we can find an integer $n$ such that $E\otimes O_{X}(nx_{0})\in U_{m}$ for some $m$. So if we fix an integer $l$ and consider the isomorphism
$$\hig^{l}\xrightarrow{\alpha}\hig^{l+2n}$$
induced by tensoring with $O_{X}(nx_{0})$, then the union of $\alpha^{-1}(U_{l+2n})$ for all $n$ will cover $\hig^{l}$. Hence it suffices to construct the Poincar\'e sheaf on $\alpha^{-1}(U_{l+2n})\times_{H}\hig^{n_{i}}_{ss}$ for all $n>>0$. Now if we have the Poincar\'e sheaf on $U_{l+2n}\times_{H}\hig^{n_{i}}_{ss}$, then we can again use Lemma~\ref{equivariance property} to construct the Poincar\'e sheaf on $\alpha^{-1}(U_{l+2n})\times_{H}\hig^{n_{i}}_{ss}$, and Proposition~\ref{uniqueness of CM sheaf} will guarantee that it agrees with the Poincar\'e line bundle.\\

For $(3)$, by Lemma~\ref{a property of semistable higgs bundle}, we have that
$$\hig^{n}_{ss}=(\hig^{n}_{ss}\cap\widetilde{\hig})\cup(\hig^{(n)}_{ss})$$
The construction in $(1)$ already gives the Poincar\'e sheaf on $$U_{m}\times_{H}(\hig^{n}_{ss}\cap\widetilde{\hig})$$
Since the complement of $\hig^{reg}$ has codimension greater than or equals to two, it follows that the Poinca\'e sheaf is uniquely determined by its restriction to $\hig^{reg}\times_{H}\hig$ by Proposition~\ref{uniqueness of CM sheaf}. Hence if we can construct the Poincar\'e sheaf on $U_{m}\times_{H}\hig^{(n_{i})}_{ss}$ such that its restriction to $\hig^{reg}\times_{H}\hig^{(n_{i})}_{ss}$ agrees with the Poincar\'e line bundle, then it automatically compatible with the Poincar\'e sheaf on $U_{m}\times_{H}(\hig^{n_{i}}_{ss}\cap\widetilde{\hig})$, hence they glue.\\
For the claim about the flatness over $\hig_{ss}$, we use Proposition~\ref{flatness of CM sheaves} and Proposition~\ref{smoothness of semistable higgs bundles}.

\end{proof}


\begin{thebibliography}{10}

\bibitem{Autoduality}
D. Arinkin.
\emph{Autoduality of compactified Jacobian of planar curves}.
\newblock {arxiv:1001.3868v2}.

\bibitem{Poincare line bundle}
D. Arinkin.
\emph{Cohomology of line bundles on compactified Jacobians}.
\newblock {arxiv:0705.0190}.

\bibitem{Fourier-Mukai}
Margarida Melo, Antonio Rapagnetta, Filippo Viviani.
\emph{Fourier-Mukai and autoduality for compactified Jacobians II}.
\newblock {arxiv:1308.0564}.

\bibitem{Oper}
D. Arinkin.
\emph{Irreducible connections admit generic oper structures}.
\newblock {arxiv:1602.08989}.

\bibitem{commutative algebra}
David Eisenbud.
\emph{Commutative algebra with a view toward algebraic geometry}.

\bibitem{Stacks Project}
\emph{Stacks Project}.

\bibitem{irreducibility}
A.~Altman, A.~Iarrobino, and S.~Kleiman.
\emph{Irreducibility of the compactified Jacobian}.
\newblock {In {\em Real and complex singularities ({P}roc. {N}inth {N}ordic
  {S}ummer {S}chool/{NAVF} {S}ympos. {M}ath., {O}slo, 1976)}, pages 1--12.
  Sijthoff and Noordhoff, Alphen aan den Rijn, 1977}.

\bibitem{CJ}
A.~Altman and S.~Kleiman.
\emph{Compactifying the Picard scheme}.
\newblock {Adv. in Math., 35(1):50–112, 1980}.

\bibitem{Surface}
J. Fogarty.
\emph{Algebraic families on an algebraic surface}.
\newblock {Amer. J. Math, 90:511--521, 1968}.

\bibitem{Fourier-Mukai}
Daniel Huybrechts.
\emph{Fourier-Mukai Transforms in Algebraic Geometry}.
\newblock {Clarendon Press, 2006}.

\bibitem{Automorphic}
V. Drinfeld.
\emph{Two dimensional $l$-adic representations of the fundamental group of a curve over a finite field and automorphic forms on GL(2)}.
\newblock {Amer. J. Math. 105 (1983) 85-114}.

\bibitem{Laumon}
G. Laumon.
\emph{Correspondence de Langlands geometrique pour les corps de fonctions}.
\newblock {Duke Math. Jour. 54 (1987), 309–359}.

\bibitem{Mao Li}
M. Li.
\emph{Construction of the Poincar\'e sheaf on the stack of rank two Higgs bundles of $\p$}.
\newblock {arXiv:1709.05292}.

\bibitem{Abelian variety}
D. Mumford.
\emph{Abelian varieties},
\newblock {Tata Institute of Fundamental Research Studies in Mathematics, No. 5. Published for the Tata Institute of Fundamental Research, Bombay, 1970.}

\bibitem{Autoduality of compactified jacobian}
E. Esteves, M. Gagne and S. Kleiman.
\emph{Autoduality of the compactified Jacobian}.
\newblock {arXiv:math/9911071}.

\bibitem{Spectral curves}
A. Beauville, M.S. Narasimhan and S. Ramanan.
\emph{Spectral curves and the generalised theta divisor}.
\newblock {J. Reine Angew. Math. 398 (1989), 169–179.}

\bibitem{isospectral Hilbert scheme}
M. Haiman.
\emph{Hilbert schemes, polygraphs and the Macdonald positivity conjecture}.
\newblock {J. Amer.Math. Soc., 14(4):941–1006, 2001.}

\bibitem{Cohen-Macaulay}
W. Bruns and J. Herzog.
\emph{Cohen-Macaulay rings},
\newblock {Cambridge Studies in Advanced Mathematics. 39. Cambridge: Cambridge University Press, 1998.}

\end{thebibliography}
\end{document}